\newcommand{\Longnearrow}{%
        \begin{turn}{45}
              \raisebox{-1ex}{$\Longrightarrow$}
      \end{turn}
}
        \newcommand{\Longsearrow}{%
        \begin{turn}{-45}
                \raisebox{-1ex}{$\Longrightarrow$}
        \end{turn}
        }
        \newcommand{\Nlongnwarrow}{%
        \begin{turn}{135}
                \raisebox{-1ex}{$\centernot\Longrightarrow$}
        \end{turn}
        }
         \newcommand{\Nlongswarrow}{%
        \begin{turn}{-135}
                \raisebox{-1ex}{$\centernot\Longrightarrow$}
        \end{turn}
        }
        \newcommand{\Nlonguparrow}{%
        \begin{turn}{90}
                \raisebox{-1ex}{$\centernot\Longrightarrow$}
        \end{turn}
        }
         \newcommand{\Nlongdownarrow}{%
        \begin{turn}{-90}
                \raisebox{-1ex}{$\centernot\Longrightarrow$}
        \end{turn}
        }
\newtheorem{theorem}{Theorem}[section]
\newtheorem{lemma}[theorem]{Lemma}
\newtheorem{corollary}[theorem]{Corollary}
\newtheorem{proposition}[theorem]{Proposition}
 \newtheorem{defi}[theorem]{Definition}
\newenvironment{definition}{\begin{defi}\rm}{\end{defi}}
\newtheorem{exa}[theorem]{Example}
\newtheorem{exer}[theorem]{Exercice}
\newenvironment{example}{\begin{exa}\rm}{\end{exa}}
\newtheorem{rem}[theorem]{Remark}
\newenvironment{remark}{\begin{rem}\rm}{\end{rem}}
\newtheorem{rems}[theorem]{Remarks}
\renewcommand{\emph}[1]{{\bf #1}}
\newtheorem{thmx}{Theorem}
\newtheorem{corx}[thmx]{Corollary}
\def\H{\mathcal H}
\def\K{\mathcal K}
\def\P{\mathcal P}
\def\A{\mathcal A}
\def\B{\mathcal B}
\def\C{\mathcal C}
\def\O{\mathcal O}
\def\NN{{\mathbf N}}
\def\ZZ{{\mathbf Z}}
\def\CCC{{\mathbf C}}
\def\RRR{{\mathbf R}}
\def\QQ{\mathbf Q}
\def\RR+{{\mathbf R}^*}
\def\KK{\mathbf K}
\def\kk{\mathbf k}
\def\PP{{\mathbf P}}
\def\Q_p{{\mathbf Q}_p}
\def\S1{{\mathbf S}^1}
\def\la{\lambda}
\newcommand{\Ind}{\operatorname{Ind}}
\newcommand{\Sub}{\operatorname{Sub}}
\newcommand{\Stab}{\operatorname{Stab}}
\newcommand{\Rep}{\operatorname{Rep}}
\newcommand{\Omm}{\operatorname{Comm}}
\newcommand{\Prim}{\operatorname{Prim}}
\newcommand{\Un}{\mathds{1}}
\def\CRed{C^*_{\rm red}}
\newcommand{\act}{\curvearrowright}
\newcommand{\pr}{\operatorname{Prob}}
\def\ep{\varepsilon}
\author[B. Bekka]{Bachir Bekka}
\address{Bachir Bekka, Univ Rennes, CNRS, IRMAR--UMR 6625, Campus Beaulieu, F-35042 Rennes Cedex, France}
\email{bachir.bekka@univ-rennes1.fr}
\author[M. Kalantar]{Mehrdad Kalantar}
\address{Mehrdad Kalantar, Department of Mathematics, University of Houston, Houston, TX, USA}
\email{kalantar@math.uh.edu}
\begin{document}

\title[Quasi-regular representations]{Quasi-regular representations of discrete groups and associated $C^*$-algebras}

\thanks{BB was supported  by the  Agence Nationale de la Recherche (ANR-11-LABX-0020-01, ANR-14-CE25-0004); MK was supported by the NSF Grant DMS-1700259.}

\maketitle

\begin{abstract}
Let $G$ be a countable group. We introduce several equivalence relations on the set $\Sub(G)$ of subgroups of $G$, defined by properties of the quasi-regular representations $\la_{G/H}$ associated to $H\in \Sub(G)$ and compare them to the relation of $G$-conjugacy of subgroups.
We define a class  $\Sub_{\rm sg}(G)$  of subgroups (these are subgroups with a certain spectral gap property)
and show that they are rigid,  in the sense that
 the equivalence class of  $H\in \Sub_{\rm sg}(G)$ for any one of the above equivalence relations
 coincides with the $G$-conjugacy class of $H.$
Next, we  introduce a second class $\Sub_{\rm w-par}(G)$ of subgroups (these are subgroups which are
weakly parabolic in some sense) and we establish results concerning the ideal structure of the $C^*$-algebra $C^*_{\la_{G/H}}(G)$ generated by $\la_{G/H}$ for  subgroups $H$ which belong to either one of  the classes $\Sub_{\rm w-par}(G)$ and $\Sub_{\rm sg} (G)$.
Our results are valid,  more generally, for induced representations $\Ind_H^G \sigma$, where $\sigma$ is a representation
of   $H\in \Sub(G).$ 
\end{abstract}

\section{Introduction}
\label{S: Intro}
Let $G$ be a countable discrete group. By results of Glimm \cite{Glim--61} and Thoma \cite{Thom--68},
  the classification of unitary dual $\widehat{G}$, that is, the  set of irreducible unitary representations of $G$ up to unitary equivalence, is hopeless, unless $G$ is virtually abelian. By contrast, 
the  primitive ideal space $\Prim(G)$, that is, the set of irreducible unitary representations of $G$ up to weak equivalence (see Section~\ref{S: C*-algebras}), is  a more accessible dual space  of $G:$ indeed, $\Prim(G)$, equipped
with a natural Borel structure, is known to be  a standard Borel space for any countable group $G$ (see \cite{EffrosPrim}).

Examples of  irreducible unitary representations of $G$ are given by  the quasi-regular representations $(\la_{G/H},\ell^2(G/H))$  
of self-commensurating subgroups $H$ (see Subsection~\ref{S: IrredQuasiRegRep}).
Given two such subgroups $H$ and $L,$ a natural question is: when do $\la_{G/H}$ and $\la_{G/L}$ define the same point in 
$\Prim(G)$, that is, when are $\la_{G/H}$ and $\la_{G/L}$ weakly equivalent?
One of our main concerns in this paper is to study this question as well as related problems.

Let $\Sub(G)$  be the set of  subgroups of $G$ and 
$\Rep(G)$ the set of unitary representations of $G$ on a separable Hilbert space.
Consider the map 
$$
\Lambda: \Sub(G) \to \Rep(G), \qquad H\mapsto \la_{G/H}.
$$
The group $G$ acts by conjugation on $\Sub(G)$ and $\Lambda$ factorizes to a map
$$
\Sub(G)/{\sim_{\rm conj}} \to \Rep(G)/{\sim_{\rm un}},
$$
where $\Sub(G)/{\sim_{\rm conj}}$ is the set of conjugacy classes of subgroups of $G$ and $\Rep(G)/{\sim_{\rm un}}$ is the set of equivalence classes of unitary representations of $G$.

The sets $\Sub(G)$ and $\Rep(G)$ carry  natural topologies,  respectively the Chabauty topology and the Fell topology.
The map $\Lambda$ is continuous with respect to these topologies (see Proposition~\ref{Pro:FellChabautyTop} below) and so factorizes to a map 
$$
\Sub(G)/{\sim_{\rm w-conj}} \to \Rep(G)/{\sim_{\rm w-un}},
$$
where $\Sub(G)/{\sim_{\rm w-conj}}$ is the \textbf{quasi-orbit} space of $\Sub(G)$ with respect to the action of $G$  (see Section~\ref{S:RepEquivSubgroups}) and
$\Rep(G)/{\sim_{\rm w-un}}$ is the set of \textbf{weak equivalence} classes of unitary representations of $G.$

Unitary equivalence and weak equivalence of representations induce through the map $\Lambda: \Sub(G) \to \Rep(G)$
two other equivalence relations $\sim_{\rm rep}$ and $\sim_{\rm w-rep}$ on $\Sub(G):$
\begin{itemize}
\item $H\sim_{\rm rep} L$ if $\la_{G/H}$ and $\la_{G/L}$  are equivalent;
\item $H\sim_{\rm w-rep} L$ if $\la_{G/H}$ and $\la_{G/L}$  are weakly equivalent.
\end{itemize}
 So, we have four equivalence relations 
$\sim_{\rm conj}$, $\sim_{\rm w-conj}$, $\sim_{\rm rep}$ and $\sim_{\rm w-rep}$ on $\Sub(G),$
the strongest and the weakest of which being $\sim_{\rm conj}$ and $\sim_{\rm w-rep}$, respectively.
The following diagram summarizes the  relationships between these  equivalence relations (for more details, see Proposition~\ref{Pro-Def-RepEquivalentSub} below):
\bigskip
 $$
 \begin{array}[c]{ccccc}
&& H\sim_{\rm w-conj} L&&\\
&\Longnearrow\Nlongswarrow&&\Longsearrow\Nlongnwarrow&\\
&H\sim_{\rm conj} L&\Nlonguparrow\Nlongdownarrow&& H\sim_{\rm w-rep} L \\
&\Longsearrow \Nlongnwarrow&&\Longnearrow \Nlongswarrow&\\
&&H\sim_{\rm rep} L&&\\
\end{array}
$$

A natural question is: which subgroups $H$ of $G$ are  \textbf{rigid}  with respect to one of the relations
$\sim_{\rm w-conj}, \sim_{\rm rep},$ or $\sim_{\rm w-rep}$ in the sense that  the equivalence class of $H$ with respect to one of these relations coincides with  the conjugacy class of $H?$ In particular, which subgroups 
 have the strongest form of rigidity, that is, their $\sim_{\rm w-rep}$-equivalence
 class and the conjugacy class coincide?

In this view, we introduce the class $\Sub_{\rm sg}(G)$ of what we call 
subgroups with the \textbf{spectral gap property}; this is the class of   subgroups $H$ of $G$ such that the trivial representation $1_H$ is isolated in the spectrum of the natural representation of $H$ on $\ell^2(G/H).$
 The class $\Sub_{\rm sg}(G)$ contains all subgroups with Kazhdan's property (T) which are moreover strongly self-commensurating (see Definition~\ref{Def-StronglySelfComm} for this notion); the class $\Sub_{\rm sg}(G)$ 
 contains also all non-amenable \emph{a-normal} subgroups, that is, non-amenable subgroups $H$ of $G$ such that $H\cap gHg^{-1}$ is amenable for every  $g\in G\setminus H$. As an example, $H=SL_{n-1}(\ZZ)$ belongs to  $\Sub_{\rm sg}(G)$ for $G=SL_n(\ZZ)$ and $n\geq 3$ (see Example~\ref{Exa-SLnZSLmZ});  other examples include non-amenable maximal parabolic subgroups of convergence groups (see Proposition~\ref{Pro-ANormalParabolic} below).

Our first   result shows that subgroups from the class  $\Sub_{\rm sg}(G)$ are  rigid in the sense mentioned above.
\begin{thmx}
\label{MainTheorem1}
Let $G$ be a countable group and $H\in \Sub_{\rm sg}(G).$ 
Then   $H$  is representation rigid inside the class of self-commensurating subgroups
of $G:$   if $L\in \Sub(G)$ is  self-commensurating, then the representations  $\la_{G/H}$ 
and $\la_{G/L}$ are weakly equivalent if and only if $L$ is conjugate to $H.$ 
\end{thmx}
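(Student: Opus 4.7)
The plan is to use the $*$-isomorphism of $C^*$-algebras produced by weak equivalence to transport the spectral-gap projection from $\ell^2(G/H)$ onto $\ell^2(G/L)$, extract a rigid combinatorial relation for the $H$-action on $G/L$, and then deduce the existence of a $G$-equivariant map $G/L\to G/H$ from which self-commensuration of $L$ yields the conjugacy. Only the forward implication requires argument.

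\textbf{Transporting the spectral-gap projection.} Since $H$ and $L$ are self-commensurating, both $\lambda_{G/H}$ and $\lambda_{G/L}$ are irreducible, so their weak equivalence yields a canonical $*$-isomorphism $\sigma:C^*_{\lambda_{G/H}}(G)\to C^*_{\lambda_{G/L}}(G)$ sending $\lambda_{G/H}(x)\mapsto\lambda_{G/L}(x)$. The spectral-gap hypothesis says the projection onto the $H$-invariants of $\ell^2(G/H)$ lies in $\lambda_{G/H}(C^*(H))$, and self-commensuration of $H$ (so that $\{eH\}$ is the only finite $H$-orbit on $G/H$) forces this projection to be the rank-one projection $P:=P_{\delta_{eH}}=\lambda_{G/H}(p)$ for some $p\in C^*(H)$. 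Weak equivalence of $\lambda_{G/H}$ and $\lambda_{G/L}$ passes to their restrictions to $H$, so the same spectral-gap statement holds for $\lambda_{G/L}|_H$, and the restriction of $\sigma$ to the $H$-generated sub-$C^*$-algebras carries the central $1_H$-isotypic projection on the $H$-side to that on the $L$-side. Therefore $Q:=\lambda_{G/L}(p)$ is the nonzero projection onto the $H$-invariants in $\ell^2(G/L)$.

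\textbf{Key identity and orbit partition.} A direct computation gives $P\lambda_{G/H}(g)P=\mathbf{1}_H(g)\,P$ for every $g\in G$; applying $\sigma$ yields
\[Q\,\lambda_{G/L}(g)\,Q=\mathbf{1}_H(g)\,Q \qquad (g\in G).\]
Let $\mathcal F\subseteq G/L$ be the union of all finite $H$-orbits (nonempty since $Q\neq 0$), so the range of $Q$ is the closed span of the indicators $\phi_O$ of these orbits. Testing the identity on $\phi_O$ gives $|gO\cap O'|=0$ for every pair of finite $H$-orbits $O,O'$ and every $g\notin H$; equivalently, $g\mathcal F\cap\mathcal F=\emptyset$ for $g\notin H$. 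Together with the trivial equality $H\mathcal F=\mathcal F$ this makes $\{g\mathcal F\}_{gH\in G/H}$ a pairwise disjoint family, and transitivity of the $G$-action yields the partition
\[G/L=\bigsqcup_{gH\in G/H}g\mathcal F.\]

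\textbf{From partition to conjugacy.} The partition is the data of a $G$-equivariant map $G/L\to G/H$, and such a map corresponds to some $g_0\in G$ with $L\subseteq g_0Hg_0^{-1}$. The index $[g_0Hg_0^{-1}:L]=[H:g_0^{-1}Lg_0]$ equals the size of the finite $H$-orbit through $g_0^{-1}L$ and is therefore finite. Hence for every $g\in g_0Hg_0^{-1}$ both $L$ and $gLg^{-1}$ are subgroups of $g_0Hg_0^{-1}$ of the same finite index, so $L\cap gLg^{-1}$ has finite index in $L$ and $g\in\Omm_G(L)$. Thus $g_0Hg_0^{-1}\subseteq\Omm_G(L)=L$, and combining with $L\subseteq g_0Hg_0^{-1}$ yields $L=g_0Hg_0^{-1}$, the desired conjugacy. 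The decisive step is the identification of $Q$: weak equivalence alone only supplies the abstract isomorphism $\sigma$, and it is the spectral-gap hypothesis that pins $Q$ down concretely as the projection onto the $H$-invariants of $\ell^2(G/L)$, thereby converting the rank-one relation $P\lambda_{G/H}(g)P=\mathbf{1}_H(g)P$ into the rigid combinatorial statement $g\mathcal F\cap\mathcal F=\emptyset$ for $g\notin H$; everything downstream is a geometric consequence of this partition together with the self-commensuration of $L$.
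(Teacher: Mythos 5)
Your argument is correct in substance, and after its first step it genuinely diverges from the paper's route. Both proofs rest on the same key mechanism: the spectral gap property places the rank-one projection $P$ onto $\mathbb{C}\delta_{eH}$ inside $\la_{G/H}(C^*(H))\subset C^*_{\la_{G/H}}(G)$, and this distinguished (compact) element survives under weak equivalence. The paper exploits this abstractly: it shows $\Ind_H^G\sigma$ is a traceable irreducible representation (its $C^*$-algebra contains the compacts), invokes Dixmier's Corollary 4.1.10 to upgrade weak equivalence to unitary equivalence, and then applies Mackey's uniqueness theorem to obtain the conjugacy. You instead transport $P$ concretely to $Q=\la_{G/L}(p)$, derive $Q\,\la_{G/L}(g)\,Q=\Un_H(g)\,Q$, and extract the conjugacy by hand from the resulting partition of $G/L$, bypassing both Dixmier's result and Mackey's theorem in favour of elementary combinatorics plus self-commensuration of $L$; this buys a more self-contained argument, at the cost of working only for $\sigma=1_H$ rather than for general finite-dimensional $\sigma$ as in the paper's Theorem 4.4. (In fact, once you have $Q\neq 0$ and the displayed identity, any unit vector $\xi$ in the range of $Q$ satisfies $\langle\la_{G/L}(g)\xi,\xi\rangle=\Un_H(g)$, so $\la_{G/H}$ embeds as a subrepresentation of the irreducible representation $\la_{G/L}$; they are then unitarily equivalent and Mackey's theorem applies directly, letting you skip the partition argument entirely.) The one thin spot is your identification of $Q$ with the projection $P_V$ onto the $H$-invariants $V$ of $\ell^2(G/L)$: weak equivalence of the restrictions to $H$ does \emph{not} by itself show that $1_H$ fails to be weakly contained in the complement of $V$. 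The correct (short) justification is operator-algebraic: applying $\sigma$ to $\la_{G/H}(h)P=P$ gives $\la_{G/L}(h)Q=Q$, so the range of the projection $Q$ consists of $H$-invariant vectors, while $Q=\la_{G/L}(p)$ acts on $V$ as the scalar $1_H(p)=1$; together these force $Q=P_V$. With that patch, everything downstream of the identity checks out, including the final passage from the $G$-equivariant map $G/L\to G/H$ to $L=g_0Hg_0^{-1}$ via $\Omm_G(L)=L$.
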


Our next result deals with the ideal  structure of the $C^*$-algebra  $C^*_{\lambda_{G/H}}(G)$,
that is, the closure in $\B(\ell^2(G/H))$ of the linear span of $\la_{G/H}(G)$ for the  norm topology
(see Subsection~\ref{SS-C*-W*-alg}).
For subgroups $H\in \Sub_{\rm sg}(G),$ we prove the following result.
\begin{thmx}
\label{MainTheorem2}
The $C^*$-algebra $C^*_{\lambda_{G/H}}(G)$ contains the ideal  $I$ of compact operators on  $\ell^2(G/H)$ 
and $I$ is  the  smallest non-zero ideal of $C^*_{\lambda_{G/H}}(G)$: every  non-zero 
 two-sided closed ideal  of $C^*_{\lambda_{G/H}}(G)$ contains $I.$
 \end{thmx}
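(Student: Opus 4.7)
The plan is first to exhibit the rank-one projection $P$ onto $\CCC\delta_H\subset\ell^2(G/H)$ as an element of $C^*_{\la_{G/H}}(G)$, then to produce all matrix units by conjugating $P$ with translations $\la_{G/H}(g)$, and finally to combine this with simplicity of $\K(\ell^2(G/H))$ and irreducibility of $\la_{G/H}$ (valid for $H$ self-commensurating) to conclude that $I$ is the smallest non-zero closed two-sided ideal of $C^*_{\la_{G/H}}(G)$.

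The substantive step is the first. The hypothesis $H\in\Sub_{\rm sg}(G)$ says $1_H$ is isolated in the spectrum of $\sigma:=\la_{G/H}|_H$, which concretely provides a symmetric finite subset $F\subseteq H$ and $\kappa>0$ with $\max_{h\in F}\|\sigma(h)\xi-\xi\|\ge\kappa\|\xi\|$ for every $\xi$ orthogonal to $\ell^2(G/H)^H$. The self-adjoint average $a:=\frac{1}{|F|}\sum_{h\in F}\la_{G/H}(h)$ lies in $C^*_{\la_{G/H}}(G)$, acts as the identity on $\ell^2(G/H)^H$, and the spectral gap estimate gives $\langle a\xi,\xi\rangle\le 1-\frac{\kappa^2}{2|F|}$ for unit vectors $\xi$ orthogonal to $\ell^2(G/H)^H$; hence $1$ is isolated in the spectrum of $a$. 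Continuous functional calculus applied to $a$ with a bump function equal to $1$ at $1$ and vanishing on the remaining spectrum then produces the spectral projection $Q$ onto $\ell^2(G/H)^H$ inside $C^*_{\la_{G/H}}(G)$. Since $H$ is self-commensurating, for every $g\notin H$ the $H$-orbit of $gH$ in $G/H$ is infinite, so it yields no $\ell^2$-invariant vector; therefore $\ell^2(G/H)^H=\CCC\delta_H$ and $Q=P$ is the desired rank-one projection.

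The second step is a direct computation: for $g_1,g_2\in G$ one has
$$
\la_{G/H}(g_1)\,P\,\la_{G/H}(g_2)^{*}\colon\;\xi\mapsto\langle\xi,\delta_{g_2H}\rangle\,\delta_{g_1H},
$$
which is the matrix unit $e_{g_1H,\,g_2H}$. Since these matrix units span the finite-rank operators and the latter are norm-dense in $\K(\ell^2(G/H))$, we obtain $I\subseteq C^*_{\la_{G/H}}(G)$. For the third step, $\la_{G/H}$ being irreducible, $C^*_{\la_{G/H}}(G)$ acts irreducibly on $\ell^2(G/H)$. Let $J$ be a non-zero closed two-sided ideal of $C^*_{\la_{G/H}}(G)$; then $J\cap I$ is a closed two-sided ideal of the simple $C^*$-algebra $I$, so $J\cap I\in\{0,I\}$. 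If $J\cap I=0$, then for every $j\in J$ and every unit vector $\xi$ the rank-one projection $p_\xi\colon\eta\mapsto\langle\eta,\xi\rangle\xi\in I$ satisfies $jp_\xi\in J\cap I=0$; thus $j\xi=0$ for all $\xi$, so $j=0$ and $J=0$, a contradiction. Hence $I\subseteq J$, proving minimality.

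The main obstacle is Step 1: passing from the abstract isolation of $1_H$ in the spectrum of $\sigma$ to an explicit norm-limit projection in $C^*_{\la_{G/H}}(G)$, and then identifying $\ell^2(G/H)^H$ with $\CCC\delta_H$ via self-commensurating. The remaining steps are standard operator-algebraic manipulations once $P$ has been produced.
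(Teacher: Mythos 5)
Your proof is correct. It establishes the statement for $\la_{G/H}=\Ind_H^G 1_H$, which is exactly what Theorem~\ref{MainTheorem2} asserts, and the three steps all go through: the averaging operator $a$ is self-adjoint, fixes $\delta_H$, and satisfies $\langle a\xi,\xi\rangle\le 1-\kappa^2/(2|F|)$ on $\ell^2(G/H\setminus\{H\})$, so $1$ is an isolated eigenvalue with eigenspace $\CCC\delta_H$ (one small imprecision: the fact that every $H$-orbit of $gH$, $g\notin H$, is infinite uses that $H$ is \emph{strongly} self-commensurating, not merely self-commensurating; but this does hold for $H\in\Sub_{\rm sg}(G)$, and indeed follows directly from the spectral gap property since a finite orbit in $G/H\setminus\{H\}$ would produce an invariant vector there). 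The functional calculus then puts the rank-one projection $P$ into $C^*_{\la_{G/H}}(G)$, the matrix-unit computation gives all of $\K(\ell^2(G/H))$, and the minimality argument via simplicity of the compacts is standard.

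The route differs from the paper's in an instructive way. The paper deduces Theorem~\ref{MainTheorem2} from the more general Theorem~\ref{Theo-IdealC*Subsg}, valid for $\pi=\Ind_H^G\sigma$ with $\sigma$ any finite-dimensional irreducible representation of $H$. There the key point is traceability of $\pi$ (Theorem~\ref{Theo-RepRigidSubgroups}(i)): a tensor-product argument (Step 1 of that proof) upgrades the spectral gap property to the statement that $\bigoplus_{s\in S\setminus\{e\}}\Ind_{H\cap s^{-1}Hs}^H(\sigma^s|_{H\cap s^{-1}Hs})$ weakly contains no finite-dimensional representation, and then an external lemma from \cite{Bekka-GLnQ} together with Dixmier's Corollary 4.1.10 yields $\K(\H)\subseteq C^*_\pi(G)$. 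Your argument is the same in germ --- isolate the $1_H$-isotypic projection inside the algebra generated by $\la_{G/H}(H)$ --- but it is self-contained and explicit for the case $\sigma=1_H$: the tensor-product step is unnecessary because the spectral gap hypothesis is literally the statement needed, the external lemma is replaced by the concrete functional-calculus construction of $P$, and Dixmier's corollary is replaced by generating the matrix units by translation (which, as a bonus, does not use irreducibility of $\la_{G/H}$ at that step). What the paper's longer route buys is the generality in $\sigma$ and the traceability statement, which feed into Theorems~\ref{Theo-RepRigidSubgroups}(ii)--(iii) and \ref{Theo-RepRigidSubgroups-PrimIdeal}; your argument as written does not cover nontrivial $\sigma$, though it could be adapted.
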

 
Next, we introduce a second class of subgroups:    the class $\Sub_{\rm w-par}(G)$ of  what we call \emph{weakly parabolic} subgroups  of $G$; this is the class of subgroups $H$ of $G$  for which there exists a   topologically free boundary action $G\act X$  such that $X$ admits an $H$-invariant probability  measure (concerning the notion of a boundary action, see Subsection~\ref{SS-BoudaryActions} below).
When $G$ has a  topologically free boundary action $G\act X$, then every amenable subgroup
are well as every point stabilizer of $X$ belongs to the class $\Sub_{\rm w-par}(G)$; see 
also Example \ref{Exa-Sub_g}.

For the class of weakly parabolic subgroups, we prove the following result which generalizes
a criterion from \cite{KalKen}  for the reduced $C^*$-algebra $\CRed(G)$ to be simple. 

\begin{thmx}
\label{MainTheorem3}
Let $G$ be a countable group and $H\in \Sub_{\rm w-par}(G).$ 

 \begin{itemize}
  \item[{\normalfont (i)}] The regular representation $\lambda _G$ factorizes
to a representation  of  $C^*_{\lambda_{G/H}}(G)$; so,   the reduced  $C^*$-algebra $\CRed (G)$ of $G$
is isomorphic to $C^*_{\lambda_{G/H}}(G)/I$, where  $I$ is the kernel of $\lambda_G$ in $C^*_{\lambda_{G/H}}(G)$.
 \item[{\normalfont (ii)}]  $I$ is  the  largest proper ideal of $C^*_{\lambda_{G/H}}(G)$: every proper  
 two-sided closed ideal  of $C^*_{\lambda_{G/H}}(G)$ is contained in  $I.$
\end{itemize}
\end{thmx}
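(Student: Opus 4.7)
The hypothesis $H\in\Sub_{\rm w-par}(G)$ furnishes a topologically free boundary action $G\act X$ together with an $H$-invariant probability measure $\mu\in\pr(X)$; the whole plan is to use these data to relate $C^*_{\la_{G/H}}(G)$ to $\CRed(G)$ in the spirit of the Kalantar--Kennedy proof of $C^*$-simplicity.

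For part (i), the substantive content is the weak containment $\la_G\prec\la_{G/H}$: once this is known, $\la_G$ uniquely factors through $C^*_{\la_{G/H}}(G)$, the ideal $I$ of the statement is by definition the kernel of this factorization, and the isomorphism $C^*_{\la_{G/H}}(G)/I\cong\CRed(G)$ is automatic. To prove $\la_G\prec\la_{G/H}$, the plan is to interpolate through the induced representation $\rho:=\Ind_H^G \pi_\mu^H$ of $G$, where $\pi_\mu^H$ denotes the unitary representation of $H$ on $L^2(X,\mu)$ (unitary thanks to the $H$-invariance of $\mu$). Since the constant function $1\in L^2(X,\mu)$ is $H$-fixed, $1_H$ is a subrepresentation of $\pi_\mu^H$ and hence $\la_{G/H}=\Ind_H^G 1_H\subseteq\rho$. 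On the other hand, $\rho$ is unitarily equivalent to a Koopman representation of $G$ on $L^2(G\times_H X,\tilde\mu)$, with $\tilde\mu$ the $G$-invariant measure induced by $\mu$, and one pushes the topological-freeness and boundary structure of $G\act X$ through the natural fibration $G\times_H X\to G/H$ to conclude $\la_G\prec\rho$ by a Kalantar--Kennedy-type argument on the fibered action. A second weak-containment step $\rho\prec\la_{G/H}$, for which the $H$-invariance of $\mu$ is used to build $G$-equivariant UCP maps out of $\B(\ell^2(G/H))$, then yields $\la_G\prec\rho\prec\la_{G/H}$.

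For part (ii), let $J\subsetneq C^*_{\la_{G/H}}(G)$ be a proper two-sided closed ideal and let $\pi_J$ be the representation of $G$ obtained by composing $\la_{G/H}$ with the quotient map $C^*_{\la_{G/H}}(G)\twoheadrightarrow C^*_{\la_{G/H}}(G)/J$. The claim $J\subseteq I$ is equivalent to $\la_G\prec\pi_J$. The argument parallels part (i): starting from any non-zero state of $C^*_{\la_{G/H}}(G)/J$, the $H$-invariant measure $\mu$ is used to produce a covariant pair for $(C(X),G)$ and hence a representation of the reduced crossed product $C(X)\rtimes_r G$; topological freeness of $G\act X$ then forces this representation to be faithful on the embedded copy $\CRed(G)\subseteq C(X)\rtimes_r G$, which gives $\la_G\prec\pi_J$.

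The principal technical obstacle in both parts is the weak containment against $\la_{G/H}$ (or its quotient $\pi_J$): it is precisely here that the $H$-invariance of $\mu$ has to be used non-formally, in order to convert the boundary geometry of $X$ into statements about matrix coefficients on $\ell^2(G/H)$. The cleanest implementation should go through the reduced crossed product $C(X)\rtimes_r G$, with $\mu$ supplying the covariant structure that bridges $\la_{G/H}$ and the Kalantar--Kennedy boundary-to-reduced-algebra machinery.
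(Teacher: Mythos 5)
Your reduction of part (ii) to the statement ``$\la_G\prec\pi_J$ for every proper ideal $J$'' is correct, and you have rightly identified the two pieces of data that must drive the proof (the topologically free boundary action and the $H$-invariant measure $\mu$). But both implementations have gaps, and the one in part (ii) is fatal. In part (i), the containment $\Ind_H^G\pi_\mu^H\prec\la_{G/H}$ is asserted but not proved, and I see no reason it should hold: continuity of induction goes the wrong way (it would require $\pi_\mu^H\prec 1_H$, i.e.\ the Koopman representation of $H$ on $L^2(X,\mu)$ to be weakly trivial), and a ``$G$-equivariant UCP map out of $\B(\ell^2(G/H))$'' yields weak containment of representations only if the image of $G$ sits in its multiplicative domain, which you do not establish. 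The containment $\la_G\prec\la_{G/H}$ itself has a much softer proof, which is how the paper gets it: by strong proximality $\delta_x$ lies in the weak$^*$ closure of $G\mu$ for a point $x$ with trivial stabilizer, hence $\{e\}\in\overline{\C(H)}$ in the Chabauty topology, and one applies the continuity of $L\mapsto\la_{G/L}$.

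The crossed-product strategy for part (ii) cannot be run as described. The measure $\mu$ gives the Poisson transform $P_\mu:C(X)\to\ell^\infty(G/H)$, $P_\mu(f)(gH)=\int_X f(gx)\,d\mu(x)$, which is unital, positive, isometric and $G$-equivariant --- but it is \emph{not} a $*$-homomorphism unless $\mu$ is a point mass (and weak parabolicity only supplies an invariant measure, not a fixed point). So there is no covariant pair for $(C(X),G)$ on $\ell^2(G/H)$, hence no representation of $C(X)\rtimes_r G$ (or even of the full crossed product) through which $C^*_{\la_{G/H}}(G)/J$ could be analyzed; and even granting a covariant pair, it would only give a representation of the full crossed product, with no reason to factor through the reduced one. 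What the paper does instead, following Haagerup, is to use $P_\mu$ to embed $C(X)$ as a $G$-invariant \emph{operator system} (not subalgebra) of $\B(\ell^2(G/H))$; then, for an arbitrary state $\varphi$ of $C^*_\pi(G)/J$, viewed inside $S(C^*_\pi(G))$ and extended to $\B(\H)$, strong proximality pushes its restriction to $C(X)$ onto $\delta_x$ for some $x$ with trivial stabilizer along a net $g_i\cdot\varphi$, and a Cauchy--Schwarz argument shows the limit state vanishes on $\pi(g)$ for all $g\neq e$, i.e.\ equals the canonical trace $\tau_G$. Since $S(C^*_\pi(G)/J)$ is weak$^*$ closed and $G$-invariant, $\tau_G$ factors through the quotient, which is exactly $\la_G\prec\pi_J$. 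This state-averaging step is the entire content of part (ii) and is absent from your proposal.
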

Theorem~\ref{MainTheorem2} has the following consequence for the space $\Prim(G)$.
\begin{corx} 
\label{MainCorollary}
Let $G$ be a countable group. The natural map 
$$\Sub_{\rm sg}(G)\to \Prim(G),  \qquad H \mapsto  [\la_{G/H}]$$
factorizes to an \textbf{injective} map  from the quotient space  $\Sub_{\rm sg}(G)/G$ to $\Prim(G),$
where $[\la_{G/H}]$ denotes the weak equivalence class of  $\la_{G/H}.$
\end{corx}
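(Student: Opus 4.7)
The corollary is essentially a direct consequence of Theorem~\ref{MainTheorem1}, so my plan is to verify two things: (a) the map $H\mapsto [\lambda_{G/H}]$ is well-defined as a map from $\Sub_{\rm sg}(G)$ into $\Prim(G)$ and factors through $G$-conjugacy, and (b) the induced map on the quotient $\Sub_{\rm sg}(G)/G$ is injective. For (a)'s well-definedness into $\Prim(G)$, I would rely on the fact (established in the body of the paper when introducing $\Sub_{\rm sg}(G)$) that subgroups with the spectral gap property are self-commensurating, so that by the general theorem recalled in Subsection~\ref{S: IrredQuasiRegRep} the representation $\lambda_{G/H}$ is irreducible and $[\lambda_{G/H}]$ is a legitimate element of $\Prim(G)$.

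For the factorization through conjugacy, I would give the standard elementary observation: when $L=gHg^{-1}$ with $g\in G$, the map $\delta_{xH}\mapsto \delta_{xg^{-1}L}$ extends to a unitary $U:\ell^2(G/H)\to\ell^2(G/L)$ which conjugates $\lambda_{G/H}$ to $\lambda_{G/L}$. Hence $\lambda_{G/H}\sim_{\rm un}\lambda_{G/L}$, and in particular $\lambda_{G/H}\sim_{\rm w-un}\lambda_{G/L}$, so the two irreducible representations define the same point of $\Prim(G)$. This shows that $\Lambda$ restricted to $\Sub_{\rm sg}(G)$ descends to $\Sub_{\rm sg}(G)/G\to\Prim(G)$.

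For injectivity, take $H,L\in \Sub_{\rm sg}(G)$ with $[\lambda_{G/H}]=[\lambda_{G/L}]$ in $\Prim(G)$. Since two irreducible representations define the same primitive ideal class exactly when they are weakly equivalent (see Section~\ref{S: C*-algebras}), we have $\lambda_{G/H}\sim_{\rm w-un}\lambda_{G/L}$, i.e.\ $H\sim_{\rm w-rep} L$. As $L\in\Sub_{\rm sg}(G)$ is in particular self-commensurating, I can apply Theorem~\ref{MainTheorem1} to the pair $(H,L)$: the weak equivalence of $\lambda_{G/H}$ and $\lambda_{G/L}$ forces $L$ to be $G$-conjugate to $H$. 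Thus $H$ and $L$ represent the same class in $\Sub_{\rm sg}(G)/G$, establishing injectivity.

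There is no real obstacle beyond the verifications above; the only point requiring care is the implicit use of self-commensuration for subgroups in $\Sub_{\rm sg}(G)$, which is needed both to interpret $[\lambda_{G/H}]$ as a point of $\Prim(G)$ and to apply Theorem~\ref{MainTheorem1} to the second subgroup $L$. Once that is in hand, the corollary reduces to reading off the injectivity statement of Theorem~\ref{MainTheorem1}.
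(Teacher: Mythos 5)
Your proposal is correct and follows essentially the same route as the paper: the paper derives the corollary (in its more general form, Theorem~\ref{Theo-RepRigidSubgroups-PrimIdeal}) from the strong representation rigidity of subgroups in $\Sub_{\rm sg}(G)$ inside $\Sub_{\rm sc}(G)$, which is exactly Theorem~\ref{MainTheorem1} in the case of the trivial representation, together with the same observations you make about self-commensurability (hence irreducibility of $\la_{G/H}$) and invariance under conjugation. Your care in noting that $L\in\Sub_{\rm sg}(G)$ is self-commensurating, so that Theorem~\ref{MainTheorem1} applies to the pair, is precisely the point the paper relies on via Remark~\ref{Rem-Def-SpectralSubgroups}.
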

Actually, we prove much stronger  versions of Theorem~\ref{MainTheorem1}, Theorem~\ref{MainTheorem2}, Theorem~\ref{MainTheorem3},  and Corollary~\ref{MainCorollary}; indeed, our results are valid more generally for induced representations $\Ind_H^G \sigma$ for a representation $\sigma$ of $H$: see respectively Theorem~\ref{Theo-RepRigidSubgroups}, Theorem~\ref{Theo-IdealC*Subsg}, Theorem~\ref{Theo-C*IndRep}, 
and Theorem~\ref{Theo-RepRigidSubgroups-PrimIdeal} below.

We apply our results to the case of subgroups arising as point-stabilizers of actions $G\act X$ of $G$ on Hausdorff topological spaces $X$.  In particular (see Theorem \ref{thm:Furst-bnd-non-C*-simple}), we prove that, for the standard action 
$T\act \mathbf{S}^1$ of Thompson's group $T$ on the circle
and for $x,y\in  \mathbf{S}^1\setminus \{e^{2\pi i\theta} \mid \theta\in \QQ\}$ which 
do not belong to the same $T$-orbit, the following hold:
$$T_x \sim_{\rm w-conj} T_y \qquad\text{and} \qquad T_x \not\sim_{\rm rep} T_y,$$ 
where $T_x$ and $T_y$ denote the stabilizers of $x$ and $y$ in $T$, respectively.

Interesting examples of subgroups $H$ which belong to either
one of the classes $\Sub_{\rm sg}(G)$ or $\Sub_{\rm w-par}(G)$ arise as point-stabilizers of actions $G\act X$, where $X$ is extremally disconnected or where $G\act X$ is a topologically free boundary action (see Section \ref{S:Actions}  below).

One example of  a group  $G$ and a subgroup $H$ which belong to both classes $\Sub_{\rm sg}(G)$ and $\Sub_{\rm w-par}(G)$
is  given by $G=PSL_n(\ZZ)$   and $H=PSL_{n-1}(\ZZ)$   for $n\geq 3$ (see Example~\ref{Exa-SLnZSLmZ}).
Our results show  in particular that $\Prim(PSL_n(\ZZ))$ is infinite for $n\geq 3;$
 this last result is also true for $n=2$ (see \cite{Bekka-GLnQ}).

\vskip.2cm
This  paper is organized as follows. Besides this introduction, the paper contains six more sections. 
In Section 2, we gather some background material on representation theory of discrete groups that will needed 
throughout  the paper.
 We investigate in Section 3, connections between the equivalence relations introduced above and study rigidity properties of subgroups with respect to these equivalence relations.
 In Section 4, we prove Theorem~\ref{Theo-RepRigidSubgroups} about  rigidity of subgroups with the spectral gap property,
  which is a more general version of  Theorem~\ref{MainTheorem1}.
In Section 5, we study subgroups of  a group $G$ which stabilize a point  or a probability measure 
for a continuous action $G\act X$.
Section 6 is devoted to the study of weakly parabolic subgroups. 
In Section 7, we examine the ideal structure of $C^*$-algebras associated to induced  representations; we prove  there Theorem~\ref{Theo-IdealC*Subsg} and Theorem~\ref{Theo-C*IndRep}  which are more general versions of 
 Theorem~\ref{MainTheorem2} and  Theorem~\ref{MainTheorem3}.

\section{Preliminaries}
\subsection{The compact space of subgroups}
\label{SS-SubG}
Let $G$ be a countable discrete group. Let  $\Sub(G)$ be the set of subgroups of $G$, endowed with the Chabauty topology; this is the restriction to $\Sub(G)$ of the product topology on $\{0, 1\}^G$, where 
every subgroup $H\in \Sub(G)$ is identified with its characteristic function $\Un_H \in \{0, 1\}^G.$
 The space $\Sub(G)$ is compact and metrizable, and the group $G$ acts continuously on $\Sub(G)$ by conjugation:
$$
G \times \Sub(G)\to \Sub(G), \qquad (g, H)\mapsto gHg^{-1}.
$$
For $H\in \Sub(G),$ we denote by 
$\C(H)$ the $G$-orbit of $H$, that is, the conjugacy class of $H.$
This gives rise to a first equivalence relation ${\sim_{\rm conj}}$ on $\Sub(G)$ for which the equivalence
class of $H\in \Sub(G)$ is $\C(H).$
We will introduce further equivalence relations on $\Sub(G)$ in Section~\ref{S:RepEquivSubgroups}.

\subsection{$C^*$-algebras and von Neumann algebras associated to unitary representations}
\label{SS-C*-W*-alg}
Let $G$ be a countable discrete group. Every unitary representation $(\pi, \H)$ of $G$ extends uniquely to a representation (that is, a $*$-homomorphism) $\pi: \CCC[G]\to \B(\H)$ of the group  algebra $\CCC[G]$ of finitely supported functions on $G$ given by 
$$\pi(f)=\sum_{s\in G} f(s) \pi(s) \qquad\text{for} \quad f\in \CCC[G].$$ 
The $C^*$-algebra $C^*_{\pi}(G)$ generated by $\pi$ is the norm closure in $\B(\H)$ of the $*$-algebra $\{\pi(f)\mid f\in \CCC[G]\}.$ The von Neumann algebra $W^*_{\pi}(G)$ generated by $\pi$ is the closure in $\B(\H)$ of $C^*_{\pi}(G)$ for the strong (or weak) operator topology. 

Recall that the \textbf{maximal $C^*$-algebra} $C^*(G)$ of $G$ is the completion of 
 $\CCC[G]$  of $G$ with respect to the norm
$$
f\mapsto  \sup_{\pi\in \Rep(G)} \Vert \pi(f) \Vert,
$$
where $\Rep(G)$ is the set of unitary representations of $G$ on a separable Hilbert space.
Every $\pi\in \Rep (G)$ extends uniquely to a surjective $*$-homomorphism 
 $\pi: C^*(G)\to C^*_{\pi}(G)$.

A unitary representation $(\pi, \H)$ of $G$ is \textbf{factorial} if the 
von Neumann algebra $W^*_{\pi}(G)$   is  a factor;
in this case, $\pi$ is said to be \textbf{traceable } (or \textbf{normal}) if there exists a faithful normal (not necessarily finite) trace $\tau$ on $W^*_{\pi}(G)$  and a positive element $x\in C^*_{\pi}(G)$  such that $0<\tau(x)<+\infty.$
If $\pi$ is \textit{irreducible}, then $\pi$ is traceable 
if and only if the $C^*$-algebra $C^*_{\pi}(G)$ contains the algebra of compact operators on $\H$.
 For more details on all of this, see Chapters 6 and 17 in \cite{Dix--C*}.

For every subgroup $H$ of $G$, the injection map $i:H\to G$ extends to a $*$-homomorphism  $i_*:C^*(H)\to C^*(G)$ which is injective, and so $C^*(H)$ can be viewed as a subalgebra of $C^*(G)$.

We denote by $(\lambda_G, \ell^2(G))$ the (left) regular representation of $G$. The associated $C^*$-algebra 
$C^*_{\la_G}(G)$  is called the \emph{reduced $C^*$-algebra} of $G$ and will be denoted by $\CRed(G).$
The Dirac function $\delta_e$ at the neutral element $e\in G$ extends uniquely to a bounded trace $\tau_G$ on $W^*_{\lambda_G}(G)$,
called the \emph{canonical trace} on $G.$

\subsection{Weak containment and Fell's topology}
Let $G$ be a countable discrete group. Recall that  $\rho \in \Rep(G)$ is \textbf{weakly contained} in $\pi\in \Rep(G)$,
in symbols $\rho\prec \pi,$ if 
$$
\Vert \rho(x) \Vert \leq \Vert \pi(x) \Vert \qquad\text{for all} \qquad x\in C^*(G),
$$
or, equivalently, if $C^*\ker \pi\subset C^*\ker \rho$, where $C^*\ker \pi$ and $C^*\ker \rho$ denote the kernels of the extensions of $\pi$ and $\rho$ to $C^*(G)$.
The representations $\pi$ and $\rho$ are \textbf{weakly equivalent}, in symbols $\rho \sim \pi,$ 
if $\pi\prec \rho$ and $\rho\prec \pi$.

We equip $\Rep(G)$ with Fell's topology (see Appendix F in \cite{BHV} or \cite{Dix--C*}): a sequence $(\pi_n)_n$ in $\Rep(G)$ converges to $\pi\in \Rep(G)$ if every function of positive type associated to $\pi$ is the pointwise limit 
of sums of functions of positive type associated to $\pi_n$ as $n\to \infty$;
when $\pi$ is cyclic with cyclic vector $\xi,$ it suffices to check this for
the function of positive type  $g\mapsto\langle \pi(g)\xi\mid \xi\rangle$
associated to $\xi.$

Convergence in  $\Rep(G)$  can be expressed in terms of weak  containment:
$\lim_n \pi_n= \pi$ if and only if $\pi\prec \oplus_{k} \pi_{n_k}$ for every subsequence $(\pi_{n_k})_k$ of  $(\pi_n)_n$.

\subsection{States on $C^*$-algebras associated to unitary representations}
\label{SS-States }
We  recall a few facts about states of $C^*$-algebras (for more details, see Chap. 2 in  \cite{Dix--C*}).

Let  $\A$ be a unital  $C^*$-algebra, with unit element $\mathds{1}_\A$. A \textbf{state} of $\A$  is a positive linear functional $\varphi$ on $\A$  
with $\varphi(\mathds{1}_\A)=\Vert \varphi\Vert=1.$ The set of states of $\A,$ denoted $S(\A)$, is a convex and compact subset of 
the unit ball of the dual space $\A^*$, where $\A^*$ is endowed with the weak* topology.
Observe that in the case $\A=C(X)$ for a compact space $X,$ the state space
$S(\A)$ is  the space  $\pr(X)$ of probability measures on $X$.

Assume that $\B$ is a unital $C^*$- subalgebra of $\A$. Then every state
of $\B$ extends to a state of $\A$.
If $\mathcal{C}$ is a quotient $C^*$-algebra of $\A$,  then every state of $\mathcal{C}$ lifts to a state of $\A$
and so we can view $S(\mathcal{C})$ as weak* closed subset of $S(\A).$

Let $G$ be a discrete group and $\pi$ be a unitary representation 
of $G.$ There is natural (right) action  $G\act S(C^*_{\pi} (G))$  on the state space of $C^*_{\pi} (G)$ 
given  by
$$
 g\cdot \varphi(T)= \varphi (\pi(g) T \pi(g^{-1})),
$$
for $ g\in G,\   \varphi\in  S(C^*_{\pi} (G)),$ and  $T\in  C^*_{\pi} (G).$

Let $\pi, \rho$ be  unitary representations  of $G$. Viewing  $S(C^*_\pi(G))$ and  $S(C^*_\rho(G))$
as subsets of $S(C^*(G)),$ we have: 
$$\rho\prec \pi \Longleftrightarrow S(C^*_\rho(G)) \subset S(C^*_\pi(G)).$$
Assume, moreover,  that $\rho$ is cyclic; then $\rho\prec \pi$ if and only if 
the state $T\mapsto \langle \rho(T)\xi\mid \xi\rangle$ belongs to $S(C^*_\pi(G)),$ where
$\xi$ is a unit cyclic vector for $\rho.$
In particular, for  $\pi\in \Rep(G),$ we have 
$$\la_{G} \prec \pi \Longleftrightarrow \tau_G \in S(C^*_\pi(G)),$$ 
where $\tau_G\in S(C^*_\la(G)$ is the canonical trace on $G.$

\subsection{A  few facts about induced representations}
\label{S: FactsInducedRep}
Let $G$ be a countable discrete group. We collect a few known facts on \textbf{induced representations}  $\Ind_H^G \sigma$ for $H\in \Sub(G)$ and for a unitary representation $(\sigma, \K)$ of $H.$ Recall that  $ \Ind_H^G \sigma$ of $G$  may be realized as follows. 
 Let  $\H$ be the Hilbert space of maps $F : G \to \K$ with the following properties
\begin{itemize}
\item[(i)] 
$F(xh) = \sigma(h^{-1}) F(x)$ for all $x \in G, h \in H$;
\item[(ii)]
$\sum_{x \in G/H} \Vert F(x) \Vert^2 < \infty.$
\end{itemize}
 The induced representation $\pi = \Ind_H^G \sigma$ is given on $\H$ by left translation: 
$$
(\pi(g) F)(x) \, = \, F(g^{-1}x) 
\hskip.5cm \text{for all} \hskip.2cm 
g \in G, \hskip.1cm F \in \H
\hskip.2cm \text{and} \hskip.2cm 
x \in G.
$$

We give a decomposition of the restriction of $\Ind_H^G \sigma$ to another subgroup $L$ in terms of induced representations of $L$ and we recall how amenability of $H$ or $G$ can be expressed in terms of topological properties of $\Ind_H^G \sigma.$

For $H\in \Sub(G)$, $\sigma \in \Rep(H)$ and $g\in G,$ we denote by $\sigma^g$ the representation of $g^{-1} H g$ defined by  
$$\sigma^g(x)= \sigma (gxg^{-1}) \qquad\text{for all} \qquad x\in g^{-1} Hg.$$

The following lemma is well-known (for a proof, see for instance Proposition 9 in \cite{Bekka-GLnQ}).

 \begin{lemma}
\label{Lemma-Induced}
Let $G$ be a countable discrete group, $H,L \in \Sub(G)$ and $\sigma \in \Rep(H)$.
 Let  $S$ be a system of representatives for the double coset space $H\backslash G/L$.
 The restriction  of  $\Ind_H^G \sigma$  to $L$ is equivalent to the direct sum
$$\bigoplus_{s\in S} \Ind_{L \cap s^{-1}H s}^L (\sigma^s|_{L \cap s^{-1}H s}) .$$
\end{lemma}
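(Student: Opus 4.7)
The plan is to realize the equivalence concretely via the model for induced representations recalled just above the lemma. The geometric input is the partition of $G/H$ into $L$-orbits under left multiplication; I will then transport this partition to the representation space of $\Ind_H^G\sigma$ and identify each piece with the appropriate induced representation of $L$.

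First, I would describe the orbit structure. Inverting the double-coset decomposition $G = \bigsqcup_{s\in S} HsL$ gives $G = \bigsqcup_{s\in S} Ls^{-1}H$, so the $L$-orbits on $G/H$ under left multiplication are the sets $\Omega_s = \{ls^{-1}H : l\in L\}$, and the stabilizer in $L$ of the base point $s^{-1}H$ is
$$
\{l\in L : sls^{-1}\in H\} \;=\; L\cap s^{-1}Hs,
$$
which is precisely the subgroup appearing in the statement; orbit-stabilizer then yields a bijection $L/(L\cap s^{-1}Hs)\to \Omega_s$, $l\mapsto ls^{-1}H$.

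Next, setting $\H_s = \{F\in \H : \supp F\subseteq Ls^{-1}H\}$, the partition $G/H = \bigsqcup_s\Omega_s$ furnishes an orthogonal decomposition $\H = \bigoplus_{s\in S} \H_s$ into $L$-invariant subspaces (since $l\cdot Ls^{-1}H = Ls^{-1}H$ for $l\in L$). I would then identify $\H_s$ with $\Ind_{L\cap s^{-1}Hs}^{L}(\sigma^s|_{L\cap s^{-1}Hs})$ via the map $(\Phi_s F)(l) = F(ls^{-1})$. The $(L\cap s^{-1}Hs)$-covariance of $\Phi_s F$ reduces to writing $lks^{-1} = (ls^{-1})(sks^{-1})$, observing $sks^{-1}\in H$, applying property (i) of $F$, and using the relation $\sigma^s(k^{-1}) = \sigma(sk^{-1}s^{-1})$ from the definition of $\sigma^s$; the intertwining with the $L$-action is immediate from $(\pi(l_0)F)(ls^{-1}) = F(l_0^{-1}ls^{-1})$; and the isometry uses precisely the bijection $L/(L\cap s^{-1}Hs)\simeq \Omega_s$ above. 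Surjectivity is clear by extending any covariant $f : L \to \K$ to a function on $Ls^{-1}H$ via $F(ls^{-1}h) = \sigma(h^{-1}) f(l)$, which is well-defined exactly because of the covariance of $f$.

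The argument is essentially pure bookkeeping, and there is no real obstacle; the only point that can cause confusion is keeping the sides straight, since the lemma indexes the sum by representatives of $H\backslash G/L$ while the $L$-orbits on $G/H$ are naturally indexed by $L\backslash G/H$, with the two sets in bijection via $s\mapsto s^{-1}$. Once this convention is fixed, all three verifications (covariance, intertwining, unitarity) are one-line computations.
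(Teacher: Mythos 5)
Your argument is correct and complete: the decomposition of $G/H$ into $L$-orbits indexed by $H\backslash G/L$ via $s\mapsto s^{-1}$, the identification of the stabilizer of $s^{-1}H$ with $L\cap s^{-1}Hs$, and the three verifications (covariance, intertwining, unitarity) for $\Phi_s$ all check out, including the well-definedness of the inverse map. The paper itself gives no proof, deferring to Proposition 9 of \cite{Bekka-GLnQ}, and your argument is precisely the standard orbit-decomposition proof one finds there, so there is nothing to add.
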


Recall that, if $\pi$ is finite dimensional unitary representation of a group $G$, then 
$\pi\otimes \bar{\pi}$ contains the trivial representation $1_G$,
where $\bar\pi$ is the conjugate 
representation of $\pi$, and $\pi\otimes \rho$ denotes the (inner) tensor product 
of the  representations $\pi$ and $\rho$ (see \cite[Proposition A. 1.12]{BHV}).

Amenability of $G$ is characterized in terms of $\la_G$ by the Hulanicki-Reiter theorem (\cite[Theorem  G .3.2]{BHV}):
$$G \text{ is amenable} \iff 1_G\prec \la_G \iff  \pi\prec \la_G \text{ for every  } \pi\in \Rep(G) .$$
\begin{lemma} 
\label{Lem-FiniteDimInduced}
Let $\pi$ be a finite dimensional unitary representation of $G.$
The following properties are equivalent:
\begin{itemize}
\item[(i)] $\pi\prec \la_{G};$
\item[(ii)] $G$ is amenable.
\end{itemize}
\end{lemma}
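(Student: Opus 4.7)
The implication (ii) $\Rightarrow$ (i) is an immediate consequence of the Hulanicki--Reiter theorem recalled just before the lemma, so the content is in the direction (i) $\Rightarrow$ (ii). My plan is to reduce to showing that $1_G \prec \lambda_G$ and then invoke Hulanicki--Reiter.

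The two ingredients I would combine are both already mentioned in the excerpt. First, since $\pi$ is finite dimensional, $\pi \otimes \bar\pi$ contains $1_G$ as a subrepresentation. Second, weak containment is preserved by tensoring with any fixed representation, so from $\pi \prec \lambda_G$ I get
\[
\pi \otimes \bar\pi \,\prec\, \lambda_G \otimes \bar\pi.
\]
The key additional fact I would use (Fell's absorption principle) is that for any unitary representation $\rho$ of $G$ on a Hilbert space of dimension $d$, the tensor product $\lambda_G \otimes \rho$ is unitarily equivalent to $d$ copies of $\lambda_G$; in particular $\lambda_G \otimes \bar\pi$ is weakly equivalent to $\lambda_G$. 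Concatenating,
\[
1_G \,\subset\, \pi \otimes \bar\pi \,\prec\, \lambda_G \otimes \bar\pi \,\sim\, \lambda_G,
\]
so $1_G \prec \lambda_G$, and Hulanicki--Reiter gives that $G$ is amenable.

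There is essentially no obstacle here; the only minor point that deserves care is Fell's absorption, which is standard for discrete groups and follows from the intertwining unitary $U \in \B(\ell^2(G) \otimes \H_{\bar\pi})$ defined by $U(\delta_g \otimes v) = \delta_g \otimes \bar\pi(g^{-1})v$, which conjugates $\lambda_G \otimes \bar\pi$ into $\lambda_G \otimes 1_{\H_{\bar\pi}}$. With that observation in hand, the argument is a two-line chain of weak containments together with the hypothesis that $\dim \pi < \infty$ (which is the only place finite dimensionality enters, through the embedding $1_G \subset \pi \otimes \bar\pi$).
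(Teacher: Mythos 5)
Your proof is correct and follows essentially the same route as the paper: both reduce to $1_G\prec\la_G$ by tensoring with a conjugate representation, invoking Fell's absorption principle ($\la_G\otimes\rho$ is a multiple of $\la_G$), and the containment $1_G\subset\pi\otimes\bar\pi$ for finite dimensional $\pi$, then concluding by Hulanicki--Reiter. The only (immaterial) difference is that you majorize $\pi\otimes\bar\pi$ by $\la_G\otimes\bar\pi$ whereas the paper uses $\la_G\otimes\overline{\la_G}$; both are handled by the same absorption fact.
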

\begin{proof}
The fact that (ii) implies (i) follows from the Hulanicki-Reiter theorem. Assume that 
$\pi\prec \la_{G}.$ Then 
$$ \pi\otimes\overline{\pi}\prec \la_G \otimes\overline{\la_G} \sim \la_G,$$
where we used the fact that  $\la_G\otimes \rho$ is equivalent to a multiple of $\la_G$ for 
every $\rho\in \Rep(G)$ (see e.g. \cite[Corollary  E 1.12]{BHV}).
Since $\pi\otimes \bar{\pi}$ contains the trivial representation $1_G$, it follows that $1_G\prec \la_G$ and so 
$G$ is amenable, by the Hulanicki-Reiter theorem.
\end{proof}

Here is a well-known consequence of Lemma~\ref{Lemma-Induced} and 
Lemma~\ref{Lem-FiniteDimInduced}.

\begin{corollary} 
\label{Cor-AmenableInduced}
For $H\in \Sub(G)$  and $\sigma\in \Rep(H),$
the following properties are equivalent:
\begin{itemize}
\item[(i)] $\Ind_{H}^G \sigma\prec \la_{G};$
\item[(ii)] $\sigma\prec \la_H.$
\end{itemize}
In particular, if $\sigma$ is finite dimensional, 
$\Ind_{H}^G \sigma\prec \la_{G}$ if and only if $H$ is amenable.
\end{corollary}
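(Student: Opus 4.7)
The plan is to deduce the equivalence (i)$\Leftrightarrow$(ii) directly from the two preceding lemmas, then derive the special case about amenability from Lemma~\ref{Lem-FiniteDimInduced}.

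For the direction (i)$\Rightarrow$(ii), I would restrict $\Ind_H^G\sigma$ to $H$ itself and apply Lemma~\ref{Lemma-Induced} with $L=H$. Choosing a set $S$ of double coset representatives for $H\backslash G/H$ that contains the identity, the term indexed by $s=e$ in the decomposition is $\Ind_{H\cap H}^H(\sigma|_{H})=\sigma$, so $\sigma$ is a subrepresentation of $(\Ind_H^G\sigma)|_H$. Since weak containment is preserved by restriction,
$$
\sigma \prec (\Ind_H^G\sigma)|_H \prec \lambda_G|_H.
$$
The restriction $\lambda_G|_H$ is unitarily equivalent to a multiple of $\lambda_H$ (it decomposes as a direct sum, indexed by $H\backslash G$, of copies of $\lambda_H$), so $\sigma\prec \lambda_H$, as required.

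For the converse (ii)$\Rightarrow$(i), the natural route is to use continuity of the induction functor with respect to Fell's topology: if $\sigma\prec\lambda_H$ then $\Ind_H^G\sigma\prec \Ind_H^G\lambda_H$, and induction in stages gives $\Ind_H^G\lambda_H=\Ind_H^G\Ind_{\{e\}}^H 1_{\{e\}}=\Ind_{\{e\}}^G 1_{\{e\}}=\lambda_G$. Both continuity of induction and induction in stages are standard (see \cite{BHV}, Appendix F), and I would simply cite them. This gives $\Ind_H^G\sigma\prec\lambda_G$.

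For the final assertion, assume $\sigma$ is finite dimensional. Combining the equivalence just proved with Lemma~\ref{Lem-FiniteDimInduced} gives
$$
\Ind_H^G\sigma \prec \lambda_G \iff \sigma\prec \lambda_H \iff H \text{ is amenable},
$$
which is exactly the claim. The main step requiring slightly more than routine bookkeeping is the (ii)$\Rightarrow$(i) direction, where one has to invoke continuity of induction and induction in stages rather than derive them only from Lemma~\ref{Lemma-Induced}; everything else follows by picking the $s=e$ term in the subgroup decomposition and by the Hulanicki--Reiter theorem as encoded in Lemma~\ref{Lem-FiniteDimInduced}.
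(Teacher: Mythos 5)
Your proposal is correct and follows essentially the same route as the paper: the forward direction via restriction to $H$, the $s=e$ term of the double coset decomposition from Lemma~\ref{Lemma-Induced}, and the fact that $\la_G|_H$ is a multiple of $\la_H$; the converse via continuity of induction and induction in stages; and the amenability statement via Lemma~\ref{Lem-FiniteDimInduced}. No gaps.
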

\begin{proof}
If $\sigma\prec \la_H$, then,  by ``continuity of induction"
(see \cite[Theorem F.3.5]{BHV}), 
$$\Ind_{H}^G \sigma\prec \Ind_{H}^G \la_H=\Ind_{H}^G (\Ind_{\{e\}}^H 1_H)= \la_G.$$

Conversely, assume that $\Ind_{H}^G \sigma \prec \la_{G}$. Then  $(\Ind_{H}^G \sigma)|_H\prec \la_{G}|_H$ and hence 
$(\Ind_{H}^G \sigma)|_H \prec \la_{H}$, since $\la_G|_H$ is a multiple of  $\la_H$
(see \cite[Remark F.1.9]{BHV}). By Lemma~\ref{Lemma-Induced}, 
$\sigma$ is a subrepresentation of $(\Ind_{H}^G \sigma)|_H$. It follows that $\sigma\prec \la_H$.

Assume now that $\sigma$ is finite dimensional. Then,  by Lemma~\ref{Lem-FiniteDimInduced}, $\sigma\prec \la_H$ if and only if $H$ is amenable.
\end{proof}

\subsection{Irreducible quasi-regular representations}
\label{S: IrredQuasiRegRep}

Let $G$ be a countable discrete group.
Recall that $H,L\in \Sub(G)$  are \textbf{commensurable} if $H\cap L$ has finite index in both $H$ and $L.$ The \textbf{commensurator} of $H$ in $G$ is the subgroup, denoted by $\Omm_G (H)$, of the elements $g \in G$ such that $ gH g^{-1}$ and $H$ are commensurable. The subgroup $H$ is \textbf{self-commensurating} if $\Omm_G (H)=H.$

For $H\in \Sub(G),$ denote by $\la_{G/H}$ the \textbf{quasi-regular} representation of $G$ on $\ell^2(G/H)$,
that is, $\la_{G/H}=\Ind_H^G 1_H.$ 

The subgroups $H$ for which $\la_{G/H}$ is irreducible  are described by the following classical result
(see \cite{Mack--51}, \cite{Kleppner}, \cite{Corwin}).

\begin{theorem}
\label{Theo-Mackey}
 \textbf{(Mackey's theorem)}
\begin{itemize}
 \item [(i)] Let $H$ be a self-commensurating subgroup of $G$ and let $\sigma$ a {\normalfont finite dimensional irreducible} unitary representation of $H$. Then $\Ind_{H}^G \sigma$ is irreducible.
  \item [(ii)]  Let   $H$ and $L$ be self-commensurating subgroups of $G,$ and let 
   $\sigma_1$ and $\sigma_2$ be finite dimensional irreducible unitary representations of $H$ and $L$, respectively. Then $\Ind_{H}^G \sigma_1$ and $\Ind_{L}^G \sigma_2$  are equivalent if and only if  there exists $g\in G$ such that
   \item $g^{-1}Lg=H$ and
   \item  $\sigma_1$ is equivalent to $\sigma_2^g$.
   \end{itemize}
\end{theorem}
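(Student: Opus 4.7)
The plan is to reduce both parts to a single technical observation -- the \textbf{rigidity lemma} below -- together with Mackey's double-coset decomposition (Lemma~\ref{Lemma-Induced}).

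\textbf{Rigidity Lemma.} If $K$ is a subgroup of $H$ of infinite index and $\tau$ is a finite-dimensional unitary representation of $K$, then $\Ind_K^H\tau$ contains no nonzero finite-dimensional $H$-invariant subspace. My proof is a Hilbert--Schmidt computation: a finite-dimensional $H$-invariant subspace $W$ of dimension $d$ gives rise to the evaluation map $\Psi\colon W \to V_\tau$, $\Psi(w) := w(e)$, satisfying $\Psi\pi_W(k) = \tau(k)\Psi$ for $k\in K$ together with the reconstruction formula $w(x) = \Psi(\pi_W(x^{-1})w)$. Summing $\sum_i \|w_i\|^2 = d$ over an orthonormal basis of $W$ and using that $\pi_W(x^{-1})$ is unitary on $W$ yields $d = [H:K]\cdot\|\Psi\|_{\mathrm{HS}}^2$, which forces $\Psi = 0$ and hence $W = 0$ whenever $d<\infty$ and $[H:K]=\infty$.

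For part (i), I apply Lemma~\ref{Lemma-Induced} with $L = H$ to get
\[(\Ind_H^G\sigma)|_H \cong \sigma \oplus \bigoplus_{s\in S,\, s\notin H} \Ind_{K_s}^H(\sigma^s|_{K_s}),\qquad K_s := H\cap s^{-1}Hs.\]
Self-commensuration of $H$ gives $[H:K_s] = \infty$ for $s\notin H$, so the Rigidity Lemma kills all finite-dimensional $H$-invariant subspaces in the non-trivial summands. Let $j\colon V_\sigma\hookrightarrow \Ind_H^G\sigma$ be the canonical embedding onto the $\sigma$-summand (functions supported on $H$). For any $T \in \operatorname{End}_G(\Ind_H^G\sigma)$, the image $T(j(V_\sigma))$ is a finite-dimensional $H$-invariant subspace and therefore lies inside the $\sigma$-summand; Schur's lemma then yields $T\circ j = \lambda\,j$ for some $\lambda\in \CCC$, and totality of $\{\Ind_H^G\sigma(g)\,j(v) : g\in G,\, v\in V_\sigma\}$ upgrades this to $T = \lambda\,\mathrm{Id}$. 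Hence $\Ind_H^G\sigma$ is irreducible.

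For part (ii), the ``if'' direction is a direct construction: given $g\in G$ with $g^{-1}Lg = H$ and a unitary $W\colon V_{\sigma_1}\to V_{\sigma_2}$ implementing $\sigma_1\cong\sigma_2^g$, the formula $(UF)(y) := WF(yg)$ defines a unitary $G$-intertwiner $\Ind_H^G\sigma_1\to \Ind_L^G\sigma_2$. For the ``only if'' direction, given a unitary $G$-intertwiner $U$, the image $U(j(V_{\sigma_1}))$ is a finite-dimensional $H$-invariant subspace of $\Ind_L^G\sigma_2$; decomposing via Lemma~\ref{Lemma-Induced},
\[(\Ind_L^G\sigma_2)|_H = \bigoplus_{s\in L\backslash G/H} \Ind_{H\cap s^{-1}Ls}^H\bigl(\sigma_2^s|_{H\cap s^{-1}Ls}\bigr),\]
the Rigidity Lemma confines this image to summands with $[H:H\cap s^{-1}Ls]<\infty$. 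An analogous analysis applied to $U^{-1}$, together with the inversion bijection $L\backslash G/H\leftrightarrow H\backslash G/L$, produces an $s_0$ for which $K_0 := H\cap s_0^{-1}Ls_0$ is finite-index in both $H$ and $s_0^{-1}Ls_0$; that is, $H$ and $s_0^{-1}Ls_0$ are commensurable. Since $L$ is self-commensurating, so is $s_0^{-1}Ls_0$; then for every $h\in H$, the conjugate $h(s_0^{-1}Ls_0)h^{-1}$ is commensurable with $H$ and hence with $s_0^{-1}Ls_0$, so $h\in \Omm_G(s_0^{-1}Ls_0) = s_0^{-1}Ls_0$, giving $H\subseteq s_0^{-1}Ls_0$. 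The symmetric argument using $H$ self-commensurating yields $s_0^{-1}Ls_0\subseteq H$, so $g := s_0^{-1}$ satisfies $g^{-1}Lg = H$. Tracking the intertwiner $U\circ j$ through this identification produces the unitary isomorphism $\sigma_1\cong \sigma_2^g$.

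The main obstacle is the Rigidity Lemma; once it is in hand, everything else is a matter of organizing Mackey's decomposition. A secondary subtlety in part (ii) is pairing the two double-coset decompositions (coming from $U$ and $U^{-1}$) so that the self-commensuration of $H$ and $L$ can be applied in tandem to upgrade the commensurability of $H$ and $s_0^{-1}Ls_0$ to the exact equality $g^{-1}Lg = H$.
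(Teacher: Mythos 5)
The paper does not prove Theorem~\ref{Theo-Mackey}; it cites it as a classical result of Mackey, Kleppner and Corwin, so there is no in-paper argument to compare with. Your Rigidity Lemma is correct and its Hilbert--Schmidt proof is clean. The problem is the step where you apply it: you assert that self-commensuration of $H$ gives $[H:K_s]=\infty$ for every $s\notin H$, where $K_s=H\cap s^{-1}Hs$. That does not follow. The condition $s\notin\Omm_G(H)$ only says that $H\cap s^{-1}Hs$ fails to have finite index in at least one of $H$ and $s^{-1}Hs$; it can perfectly well have finite index in $H$ and infinite index in $s^{-1}Hs$. The paper itself exhibits this in Remark~\ref{Rem-SelfNorm}.(ii): in $G=F_2=\langle a,b\rangle$ the subgroup $H=\langle b^nab^{-n}\mid n\geq 1\rangle$ is self-commensurating, yet $b^{-1}Hb\cap H=H$, so the double coset of $b$ contributes a finite-dimensional summand $\sigma^{b}|_H$ to $(\Ind_H^G\sigma)|_H$. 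For such $H$ (for instance with $\sigma=1_H$, where this extra summand is again $1_H$) the image $T(j(V_\sigma))$ need not lie in the distinguished $\sigma$-summand, and your Schur's lemma step collapses. What your argument actually proves is part (i) for \emph{strongly} self-commensurating subgroups in the sense of Definition~\ref{Def-StronglySelfComm} --- which happens to cover every application in the paper, since subgroups in $\Sub_{\rm sg}(G)$ are strongly self-commensurating --- but it is not a proof of the stated theorem.

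The missing idea is that the commutant is a $*$-algebra: a bounded non-scalar intertwiner $T$ yields a non-scalar self-adjoint one, and the kernel of a self-adjoint intertwiner is supported on double cosets $D$ for which both $D$ and $D^{-1}$ give finite-index intersections, i.e., on double cosets contained in $\Omm_G(H)$. This symmetrization, which is the content of the Mackey--Kleppner--Corwin intertwining number theorem, is exactly what lets one pass from the one-sided finiteness your Rigidity Lemma detects to membership in the commensurator. The same issue recurs in part (ii): from $U$ you get some $s$ with $[H:H\cap s^{-1}Ls]<\infty$ and from $U^{-1}$ some $s'$ with $[L:L\cap s'^{-1}Hs']<\infty$, but nothing in your argument forces these to come from the same double coset (under inversion), so the existence of a single $s_0$ with $H$ and $s_0^{-1}Ls_0$ commensurable is not established; you flag this ``pairing'' as a subtlety but do not carry it out, and it is in fact the crux. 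Once such an $s_0$ is produced, your argument upgrading commensurability of $H$ and $s_0^{-1}Ls_0$ to equality via the commensurator is correct, as is the ``if'' direction of part (ii).
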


Let $\widehat{G}$ be the unitary dual of $G,$ that is, the set of equivalence classes of \textit{irreducible}
unitary representations of $G$. 

Let $\Sub_{\rm sc} (G)$ be the subset of $G$ consisting of the self-commensurating subgroups of $G.$
Set 
$$
X:=\left\{(H, \sigma)\mid H\in \Sub_{\rm sc} (G), \sigma \in \widehat{H}_{\rm fd}\right\},
$$
where  $\widehat{H}_{fd}$ denotes the set of equivalence classes of {\it finite dimensional irreducible} unitary representations of $H$. 
There is a natural (left) action of $G$ on $X,$ given by 
$$
(g, (H, \sigma))\mapsto (gH g^{-1}, \sigma^{g^{-1}}).
$$
Denote by $X/G$ the space of $G$-orbits in $X.$
Identifying an irreducible representation  of $G$ with its equivalence class, Theorem~\ref{Theo-Mackey}
can be rephrased as follows:  
 \begin{theorem}
\label{Theo-MackeyRephrased}
The map 
$$
\Ind: X\to \widehat{G}, \qquad (H,\sigma) \mapsto  \Ind_H^G \sigma
$$
is well-defined and factorizes to an \emph{injective}  map 
$
X/G \to \widehat{G}.
$
\end{theorem}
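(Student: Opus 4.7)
The plan is to derive Theorem~\ref{Theo-MackeyRephrased} directly from Theorem~\ref{Theo-Mackey}, with the main work being to check that the $G$-action on $X$ is precisely the equivalence relation implicit in Mackey's theorem. I would proceed in three steps: well-definedness, $G$-invariance of the map, and injectivity of the factored map.

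\textbf{Well-definedness.} First I would note that for any $(H,\sigma) \in X$, the subgroup $H$ is self-commensurating and $\sigma$ is finite-dimensional irreducible, so Theorem~\ref{Theo-Mackey}(i) immediately guarantees that $\Ind_H^G \sigma$ is irreducible; hence its equivalence class is a well-defined element of $\widehat{G}$. So $\Ind: X \to \widehat{G}$ is well-defined.

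\textbf{$G$-invariance.} Next I would verify that $\Ind(g\cdot(H,\sigma)) = \Ind(H,\sigma)$ in $\widehat{G}$ for all $g\in G$, where $g\cdot(H,\sigma) = (gHg^{-1}, \sigma^{g^{-1}})$. This is the classical fact that induction from a conjugate subgroup by the conjugated representation yields an equivalent representation: the explicit unitary intertwiner between $\Ind_H^G \sigma$ and $\Ind_{gHg^{-1}}^G \sigma^{g^{-1}}$ on the model of induced representations of Subsection~\ref{S: FactsInducedRep} is the map $F \mapsto F(g^{-1}\cdot)$. (Since this is standard, I would just cite it rather than write it out.) Hence $\Ind$ factorizes to a well-defined map $\overline{\Ind}: X/G \to \widehat{G}$.

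\textbf{Injectivity.} Suppose $(H,\sigma_1), (L,\sigma_2)\in X$ satisfy $\Ind_H^G \sigma_1 \sim \Ind_L^G \sigma_2$. By Theorem~\ref{Theo-Mackey}(ii), there exists $g\in G$ such that $g^{-1}Lg = H$ and $\sigma_1 \sim \sigma_2^g$. The first condition gives $L = gHg^{-1}$. For the second, by definition of $\sigma_2^g$, we have $\sigma_1(x) = \sigma_2(gxg^{-1})$ for $x\in H$, so for $y\in L$ one computes $\sigma_2(y) = \sigma_1(g^{-1}yg) = \sigma_1^{g^{-1}}(y)$, i.e.\ $\sigma_2 = \sigma_1^{g^{-1}}$ as representations of $L$. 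Therefore $(L,\sigma_2) = g\cdot (H,\sigma_1)$ in $X$, showing that they represent the same $G$-orbit. Hence $\overline{\Ind}$ is injective.

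The only point requiring any real care is matching conventions: the direction of the conjugation ($g$ vs.\ $g^{-1}$) in the $G$-action on $X$ must be aligned with the conjugation appearing in Theorem~\ref{Theo-Mackey}(ii), and one must remember that equivalence classes of irreducible representations are only defined up to unitary equivalence (so ``$\sigma_2 = \sigma_1^{g^{-1}}$'' should be read as ``$\sigma_2 \sim \sigma_1^{g^{-1}}$'' and the orbit identification is modulo unitary equivalence of the second coordinate). Otherwise the argument is purely a reformulation of Mackey's theorem.
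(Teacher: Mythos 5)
Your proof is correct and follows exactly the route the paper intends: the paper offers no separate argument, presenting Theorem~\ref{Theo-MackeyRephrased} as a direct rephrasing of Mackey's theorem, and your three checks (irreducibility from part (i), invariance under conjugation, injectivity from part (ii)) are precisely the routine verifications underlying that rephrasing. Your care with the direction of conjugation in matching $\sigma_2^g$ against the action $(H,\sigma)\mapsto (gHg^{-1},\sigma^{g^{-1}})$ is the only nontrivial point, and you handle it correctly.
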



\section{Representation equivalent subgroups}
\label{S:RepEquivSubgroups}

In this section, we introduce various equivalence relations on the space  of subgroups of a discrete group, investigate connections between them, and study rigidity properties with respect to these equivalence relations.

\subsection{Equivalence relations on the space of subgroups }
Let $G$ be a countable discrete group and let $\Sub(G)$ be the set of subgroups of $G,$ endowed with the Chabauty topology
as in Subsection~\ref{SS-SubG}.
\begin{definition} 
\label{Def-RepEquivalentSub}
Let $H, L  \in \Sub(G).$
\begin{itemize}
\item
$H$ and $L$ are  \textbf{weakly conjugate} (in symbols, $H \sim_{\rm w-conj} L$)  if  $\overline{\C(H)} =\overline{\C(L)}.$
\item
$H$ and $L$  are \textbf{representation equivalent} (in symbols, $H \sim_{\rm rep}  L$) 
if the associated quasi-regular representations $(\la_{G/H}, \ell^2(G/H)) $ and $(\la_{G/L},  \ell^2(G/L))$ are unitary equivalent. 
\item  
$H$ and $L$  are \textbf{weakly representation equivalent} (in symbols, $H \sim_{\rm w-rep}  L$)
if the associated quasi-regular representations $(\la_{G/H}, \ell^2(G/H)) $ and $(\la_{G/L},  \ell^2(G/L))$
are weakly equivalent.
\end{itemize}
\end{definition}
It is clear that all the relations introduced above are equivalence  relations on $\Sub(G)$.
We mention that  all the equivalence relations we have introduced
can be defined in exactly the same way on the space $\Sub(G)$ of {\it closed} subgroups of a general locally compact group $G$.

The   relation of weak conjugacy is related to the notion of  quasi-orbits: given an action  $G \curvearrowright X$ on a topological space $X$, consider the equivalence relation $\sim$ defined on $X$ by
$$
x \, \sim \, y
\hskip.5cm \text{if} \hskip.5cm
\overline{Gx} \, = \, \overline{Gy}.
$$
An equivalence class for $\sim$ is called a \textbf{quasi-orbit} for $G$.
The space $X/\sim,$ equipped with the quotient topology, is a T$_0$ topological space
and is characterized as such by a certain universal property; quasi-orbit spaces play an important r\^ole in the theory
of  $C^*$-algebras, among others.  For more on this, see \cite[Chap.6]{Williams}.

The notion of representation equivalent subgroups (of general locally compact groups) appeared  in the context of the  question of the existence of isospectral but not isometric Riemannian manifolds (see \cite{Sunada}, \cite{Gordon}, \cite{BhPR}).

We collect some facts about the  equivalence relations on $\Sub(G)$ introduced in  Definition~\ref{Def-RepEquivalentSub}.

\begin{proposition}
\label{Pro-Def-RepEquivalentSub} 
Let $H,L\in \Sub(G).$ Then 
\begin{itemize}
\item[{\normalfont (i)}] $H\sim_{\rm conj} L \implies H\sim_{\rm w-conj} L$;
 \item[{\normalfont (ii)}] $H\sim_{\rm conj} L \implies H\sim_{\rm rep} L$;
 \item[{\normalfont (iii)}]   $H\sim_{\rm w-conj} L \implies H\sim_{\rm w-rep} L;$
\item[{\normalfont (iv)}] $H\sim_{\rm rep} L \implies H\sim_{\rm w-rep} L;$ 
\item[{\normalfont (v)}] in general,  $H\sim_{\rm w-conj} L  \centernot\implies  H\sim_{\rm rep} L$;
\item[{\normalfont (vi)}] in general,  $H\sim_{\rm rep} L \centernot\implies  H\sim_{\rm conj} L;$
\item[{\normalfont (vii)}] in general,  $H\sim_{\rm rep} L \centernot\implies  H\sim_{\rm w-conj} L;$
\item[{\normalfont (viii)}] in general,  $H\sim_{\rm w-rep} L \centernot\implies  H\sim_{\rm rep} L.$
\end{itemize}
\end{proposition}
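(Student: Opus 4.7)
The plan is to dispatch the four positive implications (i)--(iv) by direct arguments from the definitions, and then to establish each of the four non-implications (v)--(viii) by exhibiting (or citing later in the paper) explicit subgroups that witness the failure.

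For (i), the $G$-orbit $\C(H)$ is obviously contained in its closure $\overline{\C(H)}$, so $L\in\C(H)$ immediately gives $\overline{\C(L)}=\overline{\C(H)}$. For (ii), write $L=gHg^{-1}$; the map $xL\mapsto xgH$ is a well-defined $G$-equivariant bijection $G/L\to G/H$, and the induced unitary $\ell^2(G/L)\to\ell^2(G/H)$ intertwines $\la_{G/L}$ and $\la_{G/H}$. Finally, (iv) is immediate since unitary equivalence forces equality of the $C^*$-kernels and hence weak equivalence.

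For (iii), the key tool is the continuity of $\Lambda:\Sub(G)\to\Rep(G)$ with respect to the Chabauty and Fell topologies (Proposition~\ref{Pro:FellChabautyTop}). Assume $H\sim_{\rm w-conj} L$, so $L\in\overline{\C(H)}$; by metrizability of $\Sub(G)$, pick a sequence $g_nHg_n^{-1}\to L$. Continuity yields $\la_{G/g_nHg_n^{-1}}\to\la_{G/L}$ in Fell's topology, and since each $\la_{G/g_nHg_n^{-1}}$ is unitarily equivalent to $\la_{G/H}$ by (ii), the characterization of Fell-convergence in terms of weak containment (recalled in the preliminaries) gives $\la_{G/L}\prec\la_{G/H}$. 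The reverse weak containment follows by symmetry from $H\in\overline{\C(L)}$, yielding $H\sim_{\rm w-rep} L$.

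For the non-implications, the plan is to point to concrete examples. For (v), the Thompson group action $T\act\S1$ discussed in the introduction and worked out in Theorem~\ref{thm:Furst-bnd-non-C*-simple} supplies irrational $x,y$ whose stabilizers satisfy $T_x\sim_{\rm w-conj} T_y$ but $T_x\not\sim_{\rm rep} T_y$. For (vi) and (vii), I would invoke a Sunada-type construction inside a finite or residually finite group: a pair of non-conjugate (in fact, not even weakly conjugate) subgroups $H,L$ in a group $G$ that are almost conjugate in the sense that $|C\cap H|=|C\cap L|$ for every conjugacy class $C\subset G$, since this condition forces $\la_{G/H}\cong\la_{G/L}$ but does not force $H$ and $L$ to have intersecting closures of orbits in $\Sub(G)$. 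Finally, for (viii), I would take $G$ infinite amenable and compare two subgroups whose quasi-regular representations are both weakly equivalent to $\la_G$ (hence to each other) but whose homogeneous spaces $G/H$ and $G/L$ have different cardinalities, so that $\la_{G/H}$ and $\la_{G/L}$ cannot be unitarily equivalent.

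The main obstacle will be (vii) and (viii): for (vii), one must check that a Sunada-type pair $(H,L)$ can be arranged so that not only are $H,L$ non-conjugate, but the orbit closures $\overline{\C(H)}$ and $\overline{\C(L)}$ are genuinely distinct in $\Sub(G)$ — this is automatic in a finite group where Chabauty closures are finite — and for (viii) one must choose $H,L$ so that weak equivalence is transparent (via amenability and Corollary~\ref{Cor-AmenableInduced}) while a dimension count precludes unitary equivalence. The positive implications (i)--(iv) are straightforward book-keeping; the substance of the proposition lies in exhibiting the counterexamples.
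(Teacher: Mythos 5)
Your treatment of (i)--(vii) is correct and essentially identical to the paper's: (ii) via the explicit intertwining unitary coming from $xL\mapsto xgH$, (iii) via continuity of $\Lambda$ and the weak-containment characterization of Fell convergence (this is the paper's Corollary~\ref{Cor1}), (v) via the Thompson group action on the circle, and (vi)--(vii) via an almost-conjugate (Gassmann--Sunada) pair in a finite group together with the observation that in a finite group weak conjugacy collapses to conjugacy --- which is exactly the B\'erard reference the paper invokes.

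The gap is in (viii). You propose an infinite amenable $G$ and subgroups $H,L$ with $\la_{G/H}\sim\la_G\sim\la_{G/L}$ weakly, distinguished by ``different cardinalities'' of $G/H$ and $G/L$. This cannot work as stated, for two reasons. First, since $G$ is countable, two coset spaces can have different cardinalities only if one is finite; but if $[G:H]<\infty$ then $\la_{G/H}$ is finite dimensional, and a finite-dimensional representation weakly equivalent to $\la_G$ forces $G$ to be amenable \emph{and} $\CRed(G)$ to be finite dimensional, i.e.\ $G$ finite (cf.\ Lemma~\ref{Lem-FiniteDimInduced}). If instead both coset spaces are infinite, they are both countably infinite, $\ell^2(G/H)\cong\ell^2(G/L)$ as Hilbert spaces, and no dimension count can preclude unitary equivalence. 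Second, the configuration itself is not automatic: for $G$ amenable one always has $\la_{G/H}\prec\la_G$, but the reverse containment $\la_G\prec\la_{G/H}$ generally fails for $H\neq\{e\}$ (e.g.\ $G=\ZZ$, $H=2\ZZ$, where $\la_{G/H}$ is supported on only two characters of $\ZZ$). The paper's example avoids both problems: $G=F_2$, $H=\langle a\rangle$, $L=\{e\}$; then $\la_{G/H}\prec\la_G$ because $H$ is amenable, $\la_G\prec\la_{G/H}$ because $b^nHb^{-n}\to\{e\}$ in the Chabauty topology (Corollary~\ref{Cor1}), and unitary inequivalence is detected not by dimension but by the existence of a non-zero $H$-invariant vector in $\ell^2(G/H)$ (namely $\delta_H$) versus none in $\ell^2(G)$. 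Your plan could be repaired in the amenable setting (e.g.\ $G=\ZZ\wr\ZZ$ with $H$ the copy of $\ZZ$ at position $0$, whose conjugates under the shift converge to $\{e\}$), but the distinguishing invariant has to be the $H$-fixed vector, not a cardinality count.
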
 

\begin{proof}
(i) and (iv) are obvious.

\noindent
(ii) Assume that $H\sim_{\rm conj} L;$ we have to show that  $\la_{G/H}$ is equivalent to $\la_{G/L}$.
Indeed, if $L= gHg^{-1}$ for $g\in G,$ the map $U: \ell^2(G/H)\to  \ell^2(G/L),$
defined by 
$$Uf(x)= f(g^{-1}xg) \qquad\text{for} \qquad f\in \ell^2(G/H),\ x\in G,$$
is a bijective linear isometry.
Moreover, $U$  intertwines $\la_{G/H}$ and $\la_{G/L}^{g^{-1}}$.
 As  $\la_{G/L}^{g^{-1}}$ is unitary equivalent to 
$\la_{G/L},$ this proves the claim.

\noindent
(iii) For the proof, see Corollary~\ref{Cor1} below.

\noindent
(v) For the proof, see Theorem \ref{thm:Furst-bnd-non-C*-simple} below.
   
  \noindent
(vi) There are examples of finite groups $G$ containing non conjugate subgroups $H,L$
such $\la_{G/H}$ and $\la_{G/L}$ are equivalent (see \cite[I.8]{Berard}).

 \noindent
(vii)  Let $G, H$ and $L$ be  as in (vi); then $H\sim_{\rm rep} L$
and $H\centernot \sim_{\rm conj} L$ and hence $H\centernot \sim_{\rm w-conj} L$, since $G$ is finite.

   \noindent
(viii) Let $G=F_2$ be the free group on two 
   generators $a,b$ and let  $H$ be the subgroup generated by $a$ and $L=\{e\}$.
   On the one hand,  $\la_{G/H} $ is weakly contained in $ \la_{G}=\la_{G/L},$ since $H$ is amenable
   (see Corollary~\ref{Cor-AmenableInduced}).
   On the other hand, $\lim_{n} b^n H b^{-n}= \{e\}$ and hence  $\la_{G}$ is weakly contained in $\la_{G/H},$ 
   by Corollary~\ref{Cor1} below. Hence, we have $H\sim_{\rm w-rep} \{e\}$. 
   However, $\la_{G/H}$ is not equivalent to $ \la_{G}=\la_{G/L}.$ 
Indeed, $H$ has a non-zero invariant vector in $\ell^2(G/H)$ and no such vector in $\ell^2(G).$
Hence,   $H\centernot \sim_{\rm rep} \{e\}.$
\end{proof}

\subsection{Weak conjugacy and weak representation conjugacy}
\label{SS:WConj-RepConj}
We investigate the relationship between weak conjugacy and weak representation conjugacy of subgroups of a countable discrete group $G$.

The following result is a special case of a far more general result of Fell, valid for induced representations of locally compact groups (see \cite[Theorem 4.2]{Fell}).
\begin{proposition}
\label{Pro:FellChabautyTop}
The map 
$$
\Lambda: \Sub(G) \to \Rep(G), \qquad H\mapsto \la_{G/H}
$$
is continuous.
\end{proposition}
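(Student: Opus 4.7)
The plan is to reduce the statement to the fact that a cyclic vector suffices to test Fell convergence, and then observe that the Chabauty topology on $\Sub(G)$ is literally designed to be pointwise convergence of the relevant diagonal matrix coefficients.

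First, I would single out the canonical cyclic vector for $\lambda_{G/H}$. Let $\xi_H := \delta_{eH} \in \ell^2(G/H)$. Then $\{\lambda_{G/H}(g)\xi_H : g\in G\} = \{\delta_{gH}: g\in G\}$ is an orthonormal basis of $\ell^2(G/H)$, so $\xi_H$ is a cyclic unit vector. The associated diagonal function of positive type is
\[
\varphi_H(g) := \langle \lambda_{G/H}(g)\xi_H \mid \xi_H\rangle = \langle \delta_{gH}\mid \delta_{eH}\rangle = \Un_H(g),
\]
the characteristic function of $H$.

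Next, given a net (or sequence) $H_n \in \Sub(G)$ with $H_n \to H$ in the Chabauty topology, by the very definition of that topology (restriction of the product topology on $\{0,1\}^G$) one has $\Un_{H_n}(g) \to \Un_H(g)$ for every $g \in G$. Since $G$ is discrete, compact subsets are finite, so this is exactly uniform convergence on compacta. In particular, $\varphi_{H_n}$ converges pointwise to $\varphi_H$, and each $\varphi_{H_n}$ is itself a function of positive type associated to $\lambda_{G/H_n}$ (no sum needed; it is already a diagonal coefficient of the cyclic vector $\xi_{H_n}$).

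Finally, invoking the cyclic-vector criterion for Fell's topology recalled in the preliminaries, convergence $\lambda_{G/H_n} \to \lambda_{G/H}$ in $\Rep(G)$ follows as soon as the single diagonal matrix coefficient $\varphi_H$ of the cyclic vector $\xi_H$ is a pointwise limit of sums of positive type functions associated to $\lambda_{G/H_n}$; the previous paragraph exhibits $\varphi_{H_n}$ (a single term) as such an approximant. This gives continuity of $\Lambda$.

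There is essentially no obstacle here beyond a careful unwinding of definitions: the only point to keep in mind is that the cyclic vector criterion allows us to test Fell convergence on the one specific diagonal coefficient $\varphi_H = \Un_H$, which is precisely the object whose pointwise convergence defines the Chabauty topology on $\Sub(G)$.
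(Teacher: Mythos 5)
Your proof is correct and follows essentially the same route as the paper's: identify $\delta_{eH}$ as a cyclic vector for $\la_{G/H}$, observe that its diagonal matrix coefficient is $\Un_H$, note that Chabauty convergence is exactly pointwise convergence of these characteristic functions, and conclude via the cyclic-vector criterion for Fell's topology.
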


\begin{proof} 
Let $(H_n)_n$ be a sequence in  $\Sub(G)$ converging to $H\in \Sub(G).$
Then $\Un_{H_n}$ and $\Un_{H}$ are functions of positive type on $G$ associated to 
$\la_{G/H_n}$ and $\la_{G/H}$, respectively; moreover, we have
$$\lim_n \Un_{H_n}(g)= \Un_H(g) \qquad \text{for every} \qquad g\in G,$$ 
 by definition of the Chabauty topology on $\Sub(G).$
Since $\delta_H\in \ell^2(G/H)$ is a cyclic vector for $\la_{G/H}$
and since $\Un_H= \langle \la_{G/H} (\cdot) \delta_H \mid \delta_H\rangle,$ 
it follows that  $\lim_n \la_{G/H_n}= \la_{G/H}$ in the Fell topology.
\end{proof} 

\begin{corollary}
\label{Cor1}
Let $H, L\in \Sub(G).$ 
\begin{itemize}
\item[{\normalfont (i)}] Assume that $L\in \overline{\C(H)}.$
Then $\la_{G/L}\prec \la_{G/H}$.
\item[{\normalfont (ii)}]  $H\sim_{\rm w-conj} L \implies H\sim_{\rm w-rep} L.$
\end{itemize}
\end{corollary}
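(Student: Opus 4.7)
The plan is to deduce both parts directly from the continuity of $\Lambda$ (Proposition \ref{Pro:FellChabautyTop}), together with the fact that conjugate subgroups give unitarily equivalent quasi-regular representations (Proposition \ref{Pro-Def-RepEquivalentSub}(ii)).

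For part (i), I would argue as follows. Since $L \in \overline{\C(H)}$, there is a sequence $(g_n)$ in $G$ with $g_n H g_n^{-1} \to L$ in the Chabauty topology. By Proposition \ref{Pro:FellChabautyTop}, $\la_{G/g_n H g_n^{-1}} \to \la_{G/L}$ in Fell's topology. Each representation in the sequence is unitarily equivalent to $\la_{G/H}$ (by the argument in the proof of Proposition \ref{Pro-Def-RepEquivalentSub}(ii), which provides an explicit intertwiner). Using the characterization of Fell convergence recalled in the preliminaries, convergence $\lim_n \pi_n = \pi$ implies $\pi \prec \bigoplus_k \pi_{n_k}$ for every subsequence; applied here, $\la_{G/L} \prec \bigoplus_n \la_{G/g_n H g_n^{-1}}$, which is a multiple of $\la_{G/H}$ and hence weakly equivalent to $\la_{G/H}$. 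Therefore $\la_{G/L} \prec \la_{G/H}$.

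For part (ii), assume $H \sim_{\rm w-conj} L$, i.e., $\overline{\C(H)} = \overline{\C(L)}$. Then $L \in \overline{\C(H)}$ and $H \in \overline{\C(L)}$, so applying (i) twice yields $\la_{G/L} \prec \la_{G/H}$ and $\la_{G/H} \prec \la_{G/L}$. This is precisely weak equivalence of $\la_{G/H}$ and $\la_{G/L}$, i.e., $H \sim_{\rm w-rep} L$.

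I do not anticipate a serious obstacle: the content is entirely packaged in the continuity of $\Lambda$ plus the elementary fact that conjugation induces unitary equivalence of quasi-regular representations. The only minor point that requires a little care is the passage from ``$\la_{G/L}$ is a Fell limit of representations unitarily equivalent to $\la_{G/H}$'' to ``$\la_{G/L} \prec \la_{G/H}$''; this is handled cleanly by the subsequence characterization of Fell convergence in terms of weak containment (recalled at the end of Subsection 2.3) together with the fact that $\la_{G/H}$ is weakly equivalent to any direct sum of copies of itself.
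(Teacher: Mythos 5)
Your proof is correct and follows essentially the same route as the paper: continuity of $\Lambda$ gives $\lim_n \la_{G/g_nHg_n^{-1}}=\la_{G/L}$ in Fell's topology, each term is unitarily equivalent to $\la_{G/H}$, and weak containment follows; part (ii) is then the symmetric application of (i). The only difference is that you spell out the final passage via the subsequence characterization and the weak equivalence of $\la_{G/H}$ with its multiples, a step the paper leaves implicit.
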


\begin{proof}
(i) Let $L\in \overline{\C(H)}$, and let $H_n\in \C(H)$ be such that $\lim_n H_n = L$. Then, by continuity of the map $\Lambda$, it follows that 
$$\lim_n \la_{G/{H_n}} = \lim_n \Lambda(H_n) = \Lambda(L) = \la_{G/L}.$$
 Since $\la_{G/{H'}}$ is equivalent to $\la_{G/{H}}$ for every $H'\in \C(H)$, we have therefore $\la_{G/L}\prec \la_{G/H}$.

\noindent
(ii) This follows immediately from Item (i) and the definitions.
\end{proof}

\begin{remark}
\label{Rem-Cor1}
\begin{itemize}
\item [(i)] The converse in Corollary~\ref{Cor1}.i does not hold in general:  for every amenable subgroup $H$ of $G,$ we have $\la_{G/H} \prec \la_{G}$ (see Corollary~\ref{Cor-AmenableInduced}); however, $H\in \overline{\{e\}}$  only if  $H=\{e\}.$
\item [(ii)] As shown in Proposition~\ref{Pro-Def-RepEquivalentSub}.vii, the converse in Corollary~\ref{Cor1}.ii does not hold in general. 
   \end{itemize}
\end{remark}

Let $\Sub_{\rm a}(G)$ be the subset of $\Sub(G)$ consisting of amenable subgroups of $G$. The following well-known fact
(see \cite[Corollary 4]{CapraceMonod}) is another consequence of Proposition \ref{Pro:FellChabautyTop}.

\begin{corollary}
\label{Cor-AmenableSubgroups}
Let $G$ be countable group. Then $\Sub_{\rm a}(G)$ is closed in $\Sub(G)$.
\end{corollary}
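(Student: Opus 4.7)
The plan is to combine the continuity of the map $\Lambda: \Sub(G)\to \Rep(G)$ from Proposition~\ref{Pro:FellChabautyTop} with the characterization of amenability from Corollary~\ref{Cor-AmenableInduced}. Applying the last assertion of that corollary to the finite dimensional representation $\sigma = 1_H$, one has
$$
H \text{ is amenable} \iff \la_{G/H}\prec \la_G.
$$
So the closedness of $\Sub_{\rm a}(G)$ in $\Sub(G)$ will reduce to the fact that the condition $\pi\prec \la_G$ is preserved under limits in the Fell topology.

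First, since $\Sub(G)$ is metrizable, it suffices to check that the limit $H\in \Sub(G)$ of any Chabauty-convergent sequence $(H_n)_n$ of amenable subgroups is amenable. By Proposition~\ref{Pro:FellChabautyTop}, $\la_{G/H_n}\to \la_{G/H}$ in the Fell topology, and by Corollary~\ref{Cor-AmenableInduced}, $\la_{G/H_n}\prec \la_G$ for every $n$. It then suffices to verify that the set
$$
\{\pi\in \Rep(G)\mid \pi\prec \la_G\}
$$
is closed in the Fell topology: granting this, $\la_{G/H}\prec \la_G$, and a second application of Corollary~\ref{Cor-AmenableInduced} yields that $H$ is amenable.

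The closedness of this set is essentially by definition of Fell's topology: any function of positive type associated to $\la_{G/H}$ is a pointwise limit of sums of functions of positive type associated to the $\la_{G/H_n}$'s, hence, by weak containment $\la_{G/H_n}\prec \la_G$, a pointwise limit of sums of functions of positive type associated to $\la_G$; therefore every such function belongs to the closed convex cone generated by coefficients of $\la_G$, which means exactly that $\la_{G/H}\prec \la_G$. (Alternatively, one can invoke the description recalled in Subsection~\ref{SS-States } and the fact that $S(\CRed(G))$ is weak$^*$-closed in $S(C^*(G))$.) There is no genuine obstacle here; the short argument is really just an application of the general principle that weak containment in a fixed representation passes to Fell limits, a principle that is built into the very definition of Fell's topology.
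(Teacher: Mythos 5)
Your proof is correct and follows essentially the same route as the paper: pass to a Chabauty-convergent sequence, use Proposition~\ref{Pro:FellChabautyTop} to get $\la_{G/H_n}\to\la_{G/H}$ in the Fell topology, and use Corollary~\ref{Cor-AmenableInduced} to translate amenability into $\la_{G/H}\prec\la_G$. The only difference is that you spell out the (correct, standard) step that weak containment in a fixed representation persists under Fell limits, which the paper leaves implicit; this also follows at once from the characterization $\lim_n\pi_n=\pi\iff\pi\prec\bigoplus_k\pi_{n_k}$ recalled in the preliminaries, since $\bigoplus_n\la_{G/H_n}\prec\la_G$.
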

\begin{proof}
Let $(H_n)_n$ be a sequence in $\Sub_{\rm a}(G)$ converging to $H\in \Sub(G).$ 
Then $\lim_n \la_{G/H_n}= \la_{G/H},$ by Proposition~\ref{Pro:FellChabautyTop}.
Since $H_n$ is amenable, we have $\la_{G/H_n}\prec \la_G$ for every $n$.
Thus, we have $ \la_{G/H}\prec \la_G$, which implies that $H$ is amenable (see 
Corollary~\ref{Cor-AmenableInduced}).
\end{proof}

\subsection{Conjugation rigidity of subgroups}
\label{S: ConjRigid}
We define notions of rigidity for subgroups of a countable discrete group $G$, related to the various equivalence relations on $\Sub(G)$ introduced in Definition~\ref{Def-RepEquivalentSub}.

\begin{definition}
\label{Def-RepRigidSubgroups}
Let  $H\in\Sub(G).$
\begin{itemize}
\item[(i)] $H$ is said to be \textbf{conjugation rigid} 
 if, for  every  $L\in \Sub(G)$, we have 
$$L\sim_{\rm w-conj} H \implies L\sim_{\rm conj} H$$
\big(in other words, if $\overline{\C(L)}=\overline{\C(H)}\implies \C(L)=\C(H)$\big);

\item[(ii)] $H$ is said to be \textbf{representation rigid} 
if, for every  $L\in \Sub(G)$, we have 
$$L\sim_{\rm w-rep} H \implies L\sim_{\rm conj} H$$
\big(in other words, if $\la_{G/L}\sim \la_{G/H}\implies \C(L)=\C(H)$\big);

\item[(iii)]  $H$ is said to be \textbf{strongly representation rigid} if
the following holds: if $\Ind_L^G \sigma \sim \Ind_H^G \pi$ for $L\in \Sub(G), \sigma\in  \widehat{L}_{fd}$ and $\pi\in  \widehat{H}_{fd}$,
then  there exists $g\in G$ such that $L= g^{-1}Hg$ and $\sigma$ is equivalent to $\pi^{g}$.
\end{itemize}

Let $\H$ be a $G$-invariant subset of $\Sub(G)$ and $H\in \H.$ One may define conjugation rigidity, representation rigidity and strong representation rigidity of $H$ \textbf{inside $\H$}, by allowing only subgroups $L\in \H$  in (i), (ii), and (iii).
\end{definition}

\begin{remark}
\label{Rem-Def-RepRigidSubgroups}
 Let  $H\in\Sub(G).$
\begin{itemize}
\item[(i)] It follows from Corollary~\ref{Cor1} that if $H$  is representation rigid, then $H$ is conjugation rigid. 
\item[(ii)] $H$ may be conjugation rigid without being representation rigid; indeed, 
let $G=F_2$ be the free group on two generators $a,b$.
The trivial subgroup $H=\{e\}$ is obviously conjugation rigid; however, for $L=\langle a\rangle$,
 the representation $\la_{G/L}$ is weakly  equivalent to $\la_{G}$ (see proof of Proposition~\ref{Pro-Def-RepEquivalentSub}.viii),
 that is, $H\sim_{\rm w-rep} L$. So, $H$ is not representation rigid.
 \item[(iii)] Generalizing (ii), let  $G$ be  \textbf{$C^*$-simple} group, that is, a group $G$ for which the reduced $C^*$-algebra $\CRed(G)$  is simple; for large classes examples of such groups, see \cite{BKKO}). Then, for every amenable subgroup $L$ of $G$, we have $L\sim_{\rm w-rep} \{e\}$;  so, $H=\{e\}$ is conjugation rigid and not representation rigid if $G\neq \{e\}.$
\end{itemize}
\end{remark}

\section{A  class of rigid subgroups}
\label{S: RigidSubgroups}
We   exhibit a class of subgroups $H$ of $G$  which
 have rigidity properties  in the sense of Definition~\ref{Def-RepRigidSubgroups}.

Let $G$ be a countable discrete group acting on a set $X$. 
Let $\pi_X$ be the associated natural representation of $G$ on 
$\ell^2(X).$ We say that the action $G \curvearrowright X$  has a \textbf{spectral gap} if $\pi_X$ does not weakly contain 
the trivial representation $1_G$; this is the case  if and only if  there exists no $G$-invariant mean on $\ell^\infty(X).$  For more on group actions with a spectral gap, see the overview \cite{Bekka-SpectralGap}.

Let $H\in \Sub(G).$ Observe that   $H$ is a global fixed point for the natural action $H\curvearrowright  G/H$.

\begin{definition}
\label{Def-SpectSubgroups}
We say that a subgroup $H\in\Sub(G)$ has the \textbf{spectral gap property}  if 
the action $H\curvearrowright X$ has a spectral gap, where $X=  G/H\setminus \{H\}$.
We denote by $\Sub_{\rm sg}(G)$ the set of all  subgroups of  $G$ with the spectral gap property.
\end{definition}

\begin{remark}
\label{Rem-Def-SpectralSubgroups}
\begin{itemize}
 \item[(i)] Let $H\in\Sub_{\rm sg}(G)$. Then  $H$ is the only $H$-periodic point 
 in $G/H$
 (that is, the only point with finite 
$H$-orbit). Indeed, let $\O\subset  G/H$ be a finite $H$-orbit. Then $\mathds{1}_{\O}$ is an $H$-invariant non-zero function in $\ell^2(G/H)$ and hence $\O= \{H\}$.
\item[(ii)]  By Lemma~\ref{Lemma-Induced},  a subgroup $H\in \Sub(G)$ has the spectral gap property if and only if   $1_H$ is not weakly contained in  the direct sum
$$
\bigoplus_{s\in G \setminus H} \lambda_{H/ (H \cap s^{-1}H s)}.
$$
\item[(iii)]  Let $H\in\Sub_{\rm sg}(G)$. Then it follows from part (i) above that  $H$ is self-commensurating.
In fact, $H$ satisfies a slightly stronger property to be introduced in Definition~\ref{Def-StronglySelfComm}
below.
\end{itemize}
\end{remark}

 We now establish  a strong rigidity property of the class  $\Sub_{\rm sg}(G).$ 
 Recall that traceable representations have been defined in Subsection~\ref{SS-C*-W*-alg}.
 
 \begin{theorem}
\label{Theo-RepRigidSubgroups}
Let $G$ be a countable discrete group and $H\in \Sub_{\rm sg}(G)$.
\begin{itemize}
\item[{\normalfont (i)}] Let $\sigma \in \widehat{H}_{fd}$.
 Then $\Ind_H^G \sigma$ is a traceable irreducible representation of $G.$
 \item[{\normalfont (ii)}]  Let $\sigma \in \widehat{H}_{fd}$ and $\pi\in \widehat{G}$. If
 $\pi\sim \Ind_H^G \sigma$, then $\pi$ is unitary equivalent to $\Ind_H^G \sigma.$
 \item[{\normalfont (iii)}] The subgroup  $H$  is strongly representation rigid
inside the class  $\Sub_{\rm sc}(G)$ of self-commensurating subgroups:
if $\Ind_L^G \sigma \sim \Ind_H^G \pi$ for   $L\in\Sub_{\rm sc}(G),$
$ \sigma\in  \widehat{L}_{fd}$ and $\pi\in  \widehat{H}_{fd}$,
then  there exists $g\in G$ such that $H= g^{-1}Lg$ and $\pi$ is equivalent to $\sigma^{g}$.
\end{itemize}
\end{theorem}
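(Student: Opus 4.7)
The strategy is to deduce (ii) and (iii) from (i), whose crux is the production of a finite-rank operator inside $C^*_{\Ind_H^G \sigma}(G)$. For (ii), once $\Ind_H^G \sigma$ is irreducible and $C^*_{\Ind_H^G \sigma}(G)$ contains the ideal of compact operators, any $\pi \in \widehat{G}$ with $\pi \sim \Ind_H^G \sigma$ descends to an irreducible representation of $C^*_{\Ind_H^G \sigma}(G)$ which cannot annihilate those compacts (else $C^*\ker \pi$ strictly contains $C^*\ker(\Ind_H^G \sigma)$, contradicting weak equivalence); restricted to the compacts, $\pi$ is then equivalent to the standard representation, so $\pi \cong \Ind_H^G \sigma$. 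For (iii), given $\Ind_L^G \sigma \sim \Ind_H^G \pi$ with $L$ self-commensurating, Theorem~\ref{Theo-Mackey}(i) also makes $\Ind_L^G \sigma$ irreducible, so (ii) upgrades the weak equivalence to a unitary equivalence and Theorem~\ref{Theo-Mackey}(ii) then produces the desired $g \in G$ with $g^{-1}Lg = H$ and $\pi \equiv \sigma^g$.

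For (i), realize $\Ind_H^G \sigma$ on its standard model with Hilbert space $\H$, and let $V_0 \subset \H$ be the subspace of functions supported on the single coset $H$; then $V_0 \cong V_\sigma$ is $H$-invariant with $(\Ind_H^G \sigma)|_{V_0} \cong \sigma$, while Lemma~\ref{Lemma-Induced} shows that $V_0^\perp$ carries
$$
\rho \;:=\; \bigoplus_{s \in S \setminus \{e\}} \Ind_{K_s}^H \bigl(\sigma^s|_{K_s}\bigr),
$$
where $K_s := H \cap s^{-1}Hs$ and $S$ is a set of representatives of $H\backslash G / H$. The heart of the proof is the separation statement $\sigma \not\prec \rho$.

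Assume for contradiction that $\sigma \prec \rho$; tensoring by $\bar\sigma$ and using the projection formula $\Ind_{K_s}^H(\tau) \otimes \bar\sigma \cong \Ind_{K_s}^H (\tau \otimes \bar\sigma|_{K_s})$ gives
$$
\sigma \otimes \bar\sigma \;\prec\; \bigoplus_{s \in S \setminus \{e\}} \Ind_{K_s}^H \bigl(\sigma^s|_{K_s} \otimes \bar\sigma|_{K_s}\bigr).
$$
Since $1_H \subset \sigma \otimes \bar\sigma$, this forces $1_H \prec \bigoplus_s \Ind_{K_s}^H \tau_s$ for the finite-dimensional representations $\tau_s := \sigma^s|_{K_s} \otimes \bar\sigma|_{K_s}$ of $K_s$. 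Given an approximately $H$-invariant unit vector $F \in \bigoplus_s \ell^2(H/K_s, V_{\tau_s})$, the function $\phi(x) := \|F(x)\|^2$ is well-defined on $\bigsqcup_s H/K_s$ (each $\tau_s$ being unitary) and has total $\ell^1$-mass $1$; a standard Cauchy--Schwarz estimate (using $\bigl|\,\|u\|^2 - \|v\|^2\bigr| \leq \|u-v\|(\|u\|+\|v\|)$) shows $\phi$ is approximately $H$-invariant in $\ell^1$, and then $\sqrt{\phi}$ (using $|\sqrt{a}-\sqrt{b}|^2 \leq |a-b|$) is approximately $H$-invariant in $\bigoplus_s \ell^2(H/K_s)$. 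This yields $1_H \prec \bigoplus_s \lambda_{H/K_s}$, contradicting Remark~\ref{Rem-Def-SpectralSubgroups}(ii).

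With $\sigma \not\prec \rho$, equivalently $C^*\ker \rho \not\subset C^*\ker \sigma$, the ideal $\sigma(C^*\ker \rho)$ of $\sigma(C^*(H)) \cong M_{\dim\sigma}(\CCC)$ is nonzero and hence equal to the whole matrix algebra, yielding $b \in C^*(H)$ (and hence $b \in C^*(G)$ via the canonical embedding) with $\sigma(b) = \mathds{1}_{V_\sigma}$ and $\rho(b) = 0$; consequently $(\Ind_H^G \sigma)(b) = p_0$, the rank-$\dim\sigma$ projection onto $V_0$. Since Theorem~\ref{Theo-Mackey}(i) guarantees the irreducibility of $\Ind_H^G \sigma$ (using that $H$ is self-commensurating by Remark~\ref{Rem-Def-SpectralSubgroups}(iii)), the presence of the nonzero finite-rank element $p_0$ forces $C^*_{\Ind_H^G \sigma}(G)$ to contain all compact operators on $\H$, giving traceability. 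The main obstacle is the separation step $\sigma \not\prec \rho$: the tensor--contraction uses the finite-dimensionality of $\sigma$ crucially (both through $1_H \subset \sigma \otimes \bar\sigma$ and the projection formula), and the $\ell^2$-to-$\ell^1$ passage must be handled carefully to reduce representations induced from finite-dimensional $\tau_s$ to quasi-regular ones.
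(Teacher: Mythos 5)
Your proposal is correct and follows essentially the same route as the paper: the same decomposition of $(\Ind_H^G\sigma)|_H$ into $\sigma\oplus\rho$ via Lemma~\ref{Lemma-Induced}, the same key separation step $\sigma\not\prec\rho$ proved by tensoring with $\bar\sigma$ and reducing (via norms of almost-invariant vectors) to the spectral gap of $\bigoplus_s\lambda_{H/(H\cap s^{-1}Hs)}$, and the same deduction of (ii) from the presence of the compacts and of (iii) from Mackey's theorem. The only cosmetic differences are that you pass through $\ell^1$ and take square roots where the paper applies the reverse triangle inequality directly to $x\mapsto\Vert F_n^s(x)\Vert$, and that you inline the production of the finite-rank projection $p_0$ (which the paper delegates to a cited lemma of \cite{Bekka-GLnQ}), making your argument slightly more self-contained.
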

\begin{proof}
By  Mackey's theorem  (Theorem~\ref{Theo-Mackey}), Item (iii)  is a special case of Item (ii).
Item (ii) follows from Item (i), in combination with a well-known  fact about irreducible representations of $C^*$-algebras containing the algebra of compact operators  (see Corollary 4.1.10 in \cite{Dix--C*}).
So, we only have to prove Item (i).

As a preliminary  step, we first establish that $H$ enjoys  a  spectral gap property
which is formally stronger than the one stated in Remark~\ref{Rem-Def-SpectralSubgroups}.ii.

\vskip.5cm
\noindent
$\bullet$ \textbf{Step 1.} For every $s\in G \setminus H,$ let 
$\sigma_s \in \Rep(H\cap s^{-1}H s).$ We claim that   the direct sum
$$
\rho:=\bigoplus_{s\in G \setminus H} \Ind_{H \cap s^{-1}H s}^H \sigma_s
$$
does not weakly contain any finite dimensional unitary representation of $H.$

Indeed, assume, by contradiction, that $\rho$
weakly contains a finite dimensional unitary representation of $H.$
Then, $\rho\otimes \bar{\rho}$ weakly contains $1_H.$ 
On the other hand, $\rho\otimes \bar{\rho}$ is equivalent to 
$$ \bigoplus_{s\in G \setminus H}  \Ind_{H \cap s^{-1}H s}^H \rho_s, $$
 where $\rho_s= \sigma_s \otimes (\bar{\rho}|_{H\cap s^{-1}H s})$;
see \cite[Proposition E. 2.5]{BHV}. So, denoting by $\K_s$ the Hilbert space of 
$\rho_s$ for $s\in G\setminus H,$ there exists  a sequence $(F_n^s)_{n\geq 1}$ of maps
$F_n^s : H\to \K_s$ in the Hilbert space of 
$$\pi_s:=\Ind_{H \cap s^{-1}H s}^H \rho_s,$$
with the following properties:
\begin{itemize}
\item $\sum_{s\in G \setminus H} \Vert F_n^s\Vert^2=1$ for all $n\geq 1;$
\item $\lim_{n\to +\infty} \sum_{s\in G \setminus H} \Vert  \pi_s(h) F_n^s- F_n^s\Vert^2=0$
for every $h\in H.$ 
\end{itemize}
Define, for every $s\in   G \setminus H,$ a sequence $(f_n^s)_{n\geq 1}$
of functions $f_n^s : H\to \mathbf{R}$ by 
$$f_n^s(x)= \Vert F_n^s(x)\Vert \qquad\text{for} \quad x\in H.$$
Then $f_n^s$ is constant on the $H \cap s^{-1}H s$-cosets and, for the norm of $f_n^s$ in  $\ell^2(H/(H \cap s^{-1}H s)),$ we have
$$\Vert f_n^s\Vert^2= \sum_{x\in H/ (H \cap s^{-1}H s)}  \Vert F_n^s(x)\Vert^2= \Vert F_n^s\Vert^2;$$ 
hence, 
$$
\sum_{s\in G \setminus H}  \Vert f_n^s\Vert^2= \sum_{s\in G \setminus H} \Vert F_n^s\Vert^2=1
$$ 
for all $n\geq 1.$ Moreover, for $s\in G \setminus H$ and $h\in H$, we have 
$$
\begin{aligned}
\Vert (\lambda_{H/ (H \cap s^{-1}H s)})(h)f_n^s- f_n^s\Vert^2&= 
\sum_{x\in H/ (H \cap s^{-1}H s)} \left\vert\Vert F_n^s(h^{-1} x)\Vert  - \Vert F_n^s( x)\Vert \right\vert^2\\
&\leq \sum_{x\in H/ (H \cap s^{-1}H s)} \Vert F_n^s(h^{-1} x)- F_n^s(x)\Vert^2\\
&= \Vert  \pi_s(h) F_n^s- F_n^s\Vert^2.\\
\end{aligned}
$$
Therefore,
$$\lim_{n\to +\infty} \sum_{s\in G \setminus H} \Vert (\lambda_{H/ (H \cap s^{-1}H s)})(h)f_n^s- f_n^s\Vert^2=0.$$
and so 
$$
f_n:=\bigoplus_{s\in G \setminus H}f_n^s 
$$
is a sequence of almost invariant unit vectors for the representation 
$$\bigoplus_{s\in G \setminus H} \lambda_{H/ (H \cap s^{-1}H s)};$$
this contradicts the fact that $H$ has the spectral gap property (see 
Remark~\ref{Rem-Def-SpectralSubgroups}.ii) and proves the claim.

 \vskip.5cm
\noindent
$\bullet$ \textbf{Step 2.} 
Set  $\pi:=\Ind_H^G \sigma$ for $\sigma\in \widehat{H}_{fd}$. We claim that $\pi$ is an irreducible and traceable representation of $G.$
 
 Indeed, since $H$ is self-commensurating, the irreducibility of $\pi$ follows from Mackey's theorem.
 To prove that $\pi$ is traceable, it suffices to show that $\pi(C^*(G))$ contains a non-zero compact operator.
 
 Let $S\subset G$ be  a system of representatives for  the double coset space $H\backslash G/H$
with $e\in S$.
By Lemma~\ref{Lemma-Induced}, $\pi|_H$
is equivalent to 
$$
 \bigoplus_{s\in S} \Ind_{H \cap s^{-1}H s}^H (\sigma^s|_{H \cap s^{-1}H s}) = \sigma \oplus \bigoplus_{s\in S\setminus \{e\}} 
  \Ind_{H \cap s^{-1}H s}^H (\sigma^s|_{H \cap s^{-1}H s}).
  $$
  By Step 1 above, the representation
  $$\bigoplus_{s\in S\setminus \{e\}}  \Ind_{H \cap s^{-1}H s}^H (\sigma^s|_{H \cap s^{-1}H s}).$$
  does not weakly contain the finite dimensional representation $\sigma$.
  It follows from \cite[Lemma 6]{Bekka-GLnQ} that $\pi(C^*(H))$ contains a non-zero compact operator.
  Since $C^*(H)$ can be viewed as subalgebra of $C^*(G),$ this proves  the claim and hence Item (i).
 \end{proof}

\subsection{Kazhdan subgroups}
\label{SS-Kazhdan}
 We now turn to examples of  subgroups with the spectral gap property.
We need the following  strengthening of the notion of a self-commen\-surating subgroup.
\begin{definition}
\label{Def-StronglySelfComm}
A subgroup $H\in \Sub(G)$  is \textbf{strongly self-commen\-surating} 
 in $G$ if $gH g^{-1}\cap H$ has infinite index in $H$ for every $g\in G\setminus H.$ 
\end{definition}

\begin{remark}
\label{Rem-SelfNorm}
\begin{itemize}
\item[(i)] It follows from Remark~\ref{Rem-Def-SpectralSubgroups}.i that 
every subgroup $H$ in $\Sub_{\rm sg}(G)$  is  strongly self-commen\-surating in $G.$
\item[(ii)]  A self-commensurating subgroup $H\in \Sub(G)$ is not necessarily strongly self-commen\-surating.
Indeed, let
 $G=F_2$ be the free group on two generators $a$ and $b.$
 The subgroup $H$ generated by $\{b^{n} a b^{-n}\mid n \geq 1\}$
  is easily seen to be self-commensurating. However, $b^{-1}Hb \cap H=H.$ 
  \end{itemize}
     \end{remark}

Our first  main class of examples of  subgroups with the spectral gap property is given by  subgroups with Kazhdan's property (T);
for an account on Kazhdan's property (T), see \cite{BHV}.

 \begin{proposition}
\label{Prop-StronglySelfCommKazhdan}
Let $H\in \Sub(G)$ be a strongly self-commen\-surating subgroup with property (T).
Then $H\in \Sub_{\rm sg}(G)$.
\end{proposition}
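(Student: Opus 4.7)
The plan is to verify the criterion recorded in Remark~\ref{Rem-Def-SpectralSubgroups}(ii): that the trivial representation $1_H$ is not weakly contained in
$$
\pi \,:=\, \bigoplus_{s\in G\setminus H} \lambda_{H/(H\cap s^{-1}Hs)}.
$$
The two hypotheses on $H$ will be used orthogonally: property (T) converts weak containment into actual containment, and strong self-commensuration prevents the existence of any nonzero $H$-invariant vector.

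First, I would invoke property (T) for $H$: since $H$ is a Kazhdan group, $1_H \prec \pi$ forces $1_H$ to occur as a subrepresentation of $\pi$, i.e. $\pi$ must have a nonzero $H$-invariant vector. So it suffices to show that $\pi$ has no such vector.

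Next, I would analyze $H$-invariant vectors coordinatewise. A vector $\xi=(\xi_s)_{s\in G\setminus H}$ in the Hilbert space of $\pi$ is $H$-invariant if and only if each $\xi_s$ is an $H$-invariant vector in $\ell^2\bigl(H/(H\cap s^{-1}Hs)\bigr)$. Since the regular representation of $H$ on $\ell^2(H/K)$ admits a nonzero invariant vector exactly when $[H:K]<\infty$, the existence of a nonzero coordinate $\xi_s$ would force $H\cap s^{-1}Hs$ to have finite index in $H$.

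Finally, I would derive a contradiction from strong self-commensuration. For $s\in G\setminus H$ one has $s^{-1}\in G\setminus H$ as well, so applying Definition~\ref{Def-StronglySelfComm} to $g:=s^{-1}$ gives that $s^{-1}Hs\cap H$ has \emph{infinite} index in $H$, contradicting the finite index obtained in the previous step. Hence $\pi$ has no nonzero $H$-invariant vector, and the argument concludes.

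I do not expect a real obstacle: property (T) and strong self-commensuration slot directly into the spectral-gap criterion. The only point demanding a mild amount of care is to apply the definition of strong self-commensuration with $g=s^{-1}$ rather than with $g=s$, so that one obtains infinite index for precisely the subgroup $H\cap s^{-1}Hs$ appearing in the decomposition.
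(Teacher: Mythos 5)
Your proof is correct and follows essentially the same route as the paper: invoke Remark~\ref{Rem-Def-SpectralSubgroups}(ii), use property (T) to upgrade weak containment of $1_H$ to actual containment in some summand $\lambda_{H/(H\cap s^{-1}Hs)}$, deduce that $H\cap s^{-1}Hs$ has finite index in $H$, and contradict strong self-commensuration. The coordinatewise analysis of invariant vectors and the substitution $g=s^{-1}$ are just slightly more explicit versions of what the paper leaves implicit.
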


\begin{proof}
Assume, by contradiction, that  
the action $H\curvearrowright X$ does not have a spectral gap, where $X= G/H \setminus \{H\}$.
Then (see Remark~\ref{Rem-Def-SpectralSubgroups}.ii) $1_H$ is weakly contained  in  the direct sum
$$
\bigoplus_{s\in G \setminus H} \lambda_{H/ H \cap s^{-1}H s}.
$$
Since $H$ has property (T), this  implies $1_H$ is contained in $\lambda_{H/ H \cap s^{-1}H s},$
for some $s\in G \setminus H.$ Then  $H \cap s^{-1}H s$ has finite index in $H$ and this 
contradicts the fact that $H$ is strongly self-commensurating.
\end{proof}

It follows from Proposition~\ref{Prop-StronglySelfCommKazhdan} and Theorem~\ref{Theo-RepRigidSubgroups}
that  strongly self-commen\-surating  Kazhdan subgroups  are strongly representation rigid inside the class 
 $\Sub_{sc}(G)$ of self-commensurating subgroups.
We do not know whether such  subgroups are representation rigid 
(or strongly representation rigid) inside the whole of $\Sub(G).$ 
However, as we now show, they are conjugation rigid.
Indeed, as Kazhdan groups are finitely generated, this follows from the following 
proposition.
\begin{proposition}
\label{Prop-RigidFinGenSubgroups}
Let  $H$ be a strongly self-commensurating  finitely generated subgroup of $G.$
Then $H$ is conjugation rigid: if $L\in \Sub(G)$  is such that 
 $\overline{\C(L)}= \overline{\C(H)},$ then $L$ is conjugate to $H.$
\end{proposition}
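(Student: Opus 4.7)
The plan is, first, to use finite generation of $H$ together with $H \in \overline{\C(L)}$ to find a conjugate of $H$ contained in $L$, and then to use finite generation once more together with strong self-commensurating to show that $L$ equals that conjugate.

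For the first part, the hypothesis $\overline{\C(L)} = \overline{\C(H)}$ gives $H \in \overline{\C(L)}$, so there is a sequence $(g_n)$ with $g_n L g_n^{-1} \to H$ in the Chabauty topology. Let $S$ be a finite generating set of $H$. Chabauty convergence forces $S \subseteq g_n L g_n^{-1}$ for all large $n$, hence $H \subseteq g_n L g_n^{-1}$, that is, $H_0 := g_n^{-1} H g_n \subseteq L$ for all large $n$. Fix one such $n$. Being a conjugate of $H$, the subgroup $H_0$ is itself finitely generated and strongly self-commensurating, and satisfies $\overline{\C(H_0)} = \overline{\C(H)} = \overline{\C(L)}$.

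For the second part, the hypothesis also gives $L \in \overline{\C(H_0)}$, so I choose a sequence $(h_m)$ with $h_m H_0 h_m^{-1} \to L$. Applying the first-part argument symmetrically --- using a finite generating set of $H_0$ and the containment $H_0 \subseteq L$ --- yields $H_0 \subseteq h_m H_0 h_m^{-1}$ for all large $m$. Now strong self-commensurating of $H_0$ forces $h_m \in H_0$ for all such $m$: were $h_m \notin H_0$, the index $[H_0 : H_0 \cap h_m H_0 h_m^{-1}]$ would be infinite by definition, yet the inclusion $H_0 \subseteq h_m H_0 h_m^{-1}$ makes $H_0 \cap h_m H_0 h_m^{-1} = H_0$ and the index equal to $1$. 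Consequently $h_m H_0 h_m^{-1} = H_0$ for all large $m$, and passing to the Chabauty limit yields $L = H_0$, a conjugate of $H$.

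The only delicate point is the final implication, where the strong form of self-commensurating is essential: mere self-commensurating leaves open the possibility of a proper infinite-index inclusion $H_0 \subsetneq h_m H_0 h_m^{-1}$ with $h_m \notin H_0$, which the strong form excludes via the index comparison above.
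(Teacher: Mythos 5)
Your proof is correct and follows essentially the same strategy as the paper's: use finite generation twice to upgrade Chabauty convergence into eventual containments, then invoke strong self-commensuration to force the conjugating elements into the subgroup, and pass to the limit. The only cosmetic difference is that you rename the conjugate $H_0 = g_n^{-1}Hg_n \subseteq L$ and work with it directly, whereas the paper conjugates the second approximating sequence by $x_{n_0}$; the underlying argument is identical.
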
 
\begin{proof}
As $\overline{\C(L)}= \overline{\C(H)},$ we have 
 $\lim_n x_n L x_n^{-1} =H$ and $\lim_n y_n H y_n^{-1} =L$ for sequences 
 $(x_n)_n, (y_n)_n$ in $G.$
 Since $H$ is finitely generated, it follows from the definition of the Chabauty topology
that there exists $n_0\geq 1$ such that   $H\subset x_nLx_n^{-1}$ for  every $n\geq n_0.$
As 
$$\lim_n x_{n_0} y_n H y_n^{-1} x_{n_0}^{-1}=x_{n_0}Lx_{n_0}^{-1}$$  and, again, as
$H$ is finitely generated, there exists $n_1\geq 1$ such that 
$$H\subset x_{n_0} y_n H y_n^{-1} x_{n_0}^{-1} \qquad\text{for all} \qquad n\geq n_1.$$
Since $H$ is strongly self-commensurating, it follows that $x_{n_0} y_n \in H$, that is, 
$x_{n_0} y_n H y_n^{-1} x_{n_0}^{-1}= H,$ for all $n\geq n_1.$
 Hence, $x_{n_0}Lx_{n_0}^{-1}=H.$
  \end{proof}

\begin{remark}
\label{rem-KazhdanSubNonClosed}
The set of Kazhdan subgroups  is in general neither closed nor open 
in $\Sub(G),$ as the following examples show.
\begin{itemize}
\item Let $G=\bigcup_{n\in \NN} H_n$ be an inductive limit of 
a strictly increasing family of finite subgroups $H_n$.
Then every $H_n$ has property (T) but $\lim_nH_n=G$ does not have  property (T),
as it is not finitely generated. 
\item The sequence of the subgroups $H_n=2^n \ZZ$ of $G=\ZZ$,
which  of course do not have property (T), converges to the Kazhdan subgroup $\{e\}$.
\end{itemize}
\end{remark}

\subsection{A-normal subgroups}
\label{SS:A-Normal}
We are going to introduce our second class of examples of subgroups with the spectral gap property.

Recall that a subgroup $H$ of $G$ is said to be \emph{malnormal} 
(respectively \emph{weakly malnormal})  if $H\cap gH g^{-1} =\{e\}$ 
(respectively $H\cap gH g^{-1}$ is finite) for every $g\in G\setminus H.$
For the relevance of this notion in group theory and operator 
algebras, see \cite{Lyndon-Schupp} and \cite{Popa-Vaes}).
We introduce a class of subgroups which contains  all weakly malnormal subgroups.

\begin{definition}
\label{Def-A-normalSubgroups}
A subgroup $H\in\Sub(G)$ is said to be \textbf{a-normal}  if 
$H\cap gH g^{-1}$ is amenable for every $g\in G\setminus H.$
We denote by $\Sub_{\rm a-norm}(G)$ the set of all a-normal subgroups of  $G.$
\end{definition}

\begin{remark}
\label{Rem-Def-A-normalSubgroups}
\begin{itemize}
\item[(i)] Of course, every malnormal subgroup and every amenable subgroup is a-normal; however,
we will be mainly interested in a-normal subgroups which are not amenable.
\item[(ii)] Every {\it non-amenable} a-normal subgroup $H$ of $G$ is  strongly self-commen\-surating in $G$ in the sense of Definition~\ref{Def-StronglySelfComm}:  
for  $g\in G\setminus H$, the group  $H\cap gH g^{-1}$ is   amenable  and so cannot have finite index in the non-amenable group $H.$

\end{itemize}
\end{remark}

 \begin{proposition}
\label{Prop-ANormalSpectralGap}
Let $H$ be a non-amenable a-normal subgroup of $G$.
Then $H\in \Sub_{\rm sg}(G)$.
\end{proposition}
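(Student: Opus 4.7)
The plan is to verify the spectral-gap criterion of Remark~\ref{Rem-Def-SpectralSubgroups}(ii): I need to show that $1_H$ is \emph{not} weakly contained in
\[
\rho \;:=\; \bigoplus_{s \in G\setminus H} \lambda_{H/(H\cap s^{-1}Hs)}.
\]
The strategy is to show that $\rho$ is itself weakly contained in the regular representation $\lambda_H$, and then invoke non-amenability of $H$ via the Hulanicki–Reiter theorem. The point is that a-normality forces each intersection $K_s := H\cap s^{-1}Hs$ (for $s\notin H$) to be amenable, which is exactly the hypothesis needed to pull $\lambda_{H/K_s}$ into $\lambda_H$ by Corollary~\ref{Cor-AmenableInduced}.

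First I would check the conjugation directions: a-normality as stated in Definition~\ref{Def-A-normalSubgroups} gives $H\cap gHg^{-1}$ amenable for $g\in G\setminus H$, and taking $g=s^{-1}$ (with $s^{-1}\notin H\iff s\notin H$) yields amenability of $K_s = H\cap s^{-1}Hs$ for every $s\in G\setminus H$. Then, applying the Hulanicki–Reiter theorem to $K_s$ gives $1_{K_s}\prec \lambda_{K_s}$, and applying Corollary~\ref{Cor-AmenableInduced} \emph{inside $H$} (with the ambient group taken to be $H$ and the subgroup $K_s$) yields
\[
\lambda_{H/K_s} \;=\; \Ind_{K_s}^H 1_{K_s} \;\prec\; \lambda_H
\qquad\text{for every } s\in G\setminus H.
\]
Since weak containment is preserved by arbitrary direct sums — because $\bigl\lVert \bigoplus_i \pi_i(x)\bigr\rVert = \sup_i \lVert \pi_i(x)\rVert \leq \lVert \lambda_H(x)\rVert$ for every $x\in C^*(H)$ — I obtain $\rho\prec \lambda_H$.

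The proof then closes by contradiction: if $1_H\prec \rho$, transitivity of weak containment would give $1_H\prec \lambda_H$, and Hulanicki–Reiter would force $H$ to be amenable, contradicting the hypothesis. Hence $H\in \Sub_{\rm sg}(G)$. There is no genuine obstacle here; the only subtlety is the bookkeeping of conjugation directions so that the a-normality hypothesis matches the intersections appearing in the spectral-gap criterion, and the (elementary but essential) observation that weak containment passes to direct sums so that a uniform bound $\lambda_{H/K_s}\prec \lambda_H$ for each $s$ propagates to the full sum $\rho$.
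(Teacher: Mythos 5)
Your proof is correct and follows essentially the same route as the paper: a-normality makes each $H\cap s^{-1}Hs$ amenable, so Corollary~\ref{Cor-AmenableInduced} gives $\lambda_{H/(H\cap s^{-1}Hs)}\prec\lambda_H$ for every $s\in G\setminus H$, and then $1_H\prec\lambda_H$ would contradict non-amenability of $H$. The extra care you take with the conjugation direction and with weak containment passing to direct sums is correct but left implicit in the paper.
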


\begin{proof}
Assume by contradiction that $H$ is not in $\Sub_{\rm sg}(G)$, that is (see Remark~\ref{Rem-Def-SpectralSubgroups}), 
$1_H$ is weakly contained  in  the direct sum
$$
\bigoplus_{s\in G \setminus H} \lambda_{H/ H \cap s^{-1}H s}.
$$
Since $H$ is a-normal, $H \cap s^{-1}H s$ is amenable and so $\lambda_{H/ H \cap s^{-1}H s}$ is weakly contained in $\la_{H}$ 
for every $s\in G \setminus H$ (see Corollary~\ref{Cor-AmenableInduced}).
This implies that $1_H$ is weakly contained in $\la_{H}$, hence that $H$ is amenable, and this is a contradiction.
\end{proof}

It follows from Proposition~\ref{Prop-ANormalSpectralGap} and Theorem~\ref{Theo-RepRigidSubgroups}
that non-amenable subgroups in $\Sub_{\rm a-norm}(G)$ are strongly representation rigid inside the class 
 $\Sub_{sc}(G)$ of self-commensurating subgroups.
 
We do not know whether non-amenable a-normal subgroup are representation rigid 
(or  strongly representation rigid) inside the whole of $\Sub(G).$ 
However, as we now show, they are conjugation rigid inside  $\Sub(G).$ 
For this, we will need the following observation.

\begin{lemma}
\label{Lem-A-NormalSubg}
The set $\Sub_{\rm a-norm}(G)$ is a closed and $G$-invariant subset of $\Sub(G).$
\end{lemma}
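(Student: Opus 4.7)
The plan is to verify the two assertions separately, both by elementary manipulations in the Chabauty topology.

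For $G$-invariance, fix $H\in \Sub_{\rm a-norm}(G)$ and $g\in G$; I claim $gHg^{-1}$ is a-normal. Given $x\in G\setminus gHg^{-1}$, the element $g^{-1}xg$ lies in $G\setminus H$, and a direct calculation gives
$$
gHg^{-1}\cap x(gHg^{-1})x^{-1} \;=\; g\bigl(H\cap (g^{-1}xg)H(g^{-1}xg)^{-1}\bigr)g^{-1}.
$$
The subgroup in parentheses is amenable by a-normality of $H$, and amenability is preserved under conjugation.

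For closedness, let $(H_n)_n$ be a sequence in $\Sub_{\rm a-norm}(G)$ converging to $H\in \Sub(G)$ in the Chabauty topology, and let $g\in G\setminus H$. I must show $H\cap gHg^{-1}$ is amenable. The key observation is that intersection-with-a-fixed-conjugate is continuous: since $\Un_{H_n}(x)\to \Un_H(x)$ pointwise for every $x\in G$, one has
$$
\Un_{H_n\cap gH_ng^{-1}}(x)\;=\;\Un_{H_n}(x)\,\Un_{H_n}(g^{-1}xg)\;\longrightarrow\;\Un_H(x)\,\Un_H(g^{-1}xg)\;=\;\Un_{H\cap gHg^{-1}}(x),
$$
so $H_n\cap gH_ng^{-1}\to H\cap gHg^{-1}$ in $\Sub(G)$. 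Moreover, since $g\notin H$, pointwise convergence yields $g\notin H_n$ for all sufficiently large $n$, whence $H_n\cap gH_ng^{-1}$ is amenable for such $n$ by a-normality of $H_n$.

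Now apply Corollary~\ref{Cor-AmenableSubgroups}: $\Sub_{\rm a}(G)$ is closed in $\Sub(G)$, so the Chabauty limit $H\cap gHg^{-1}$ of the eventually-amenable subgroups $H_n\cap gH_ng^{-1}$ is itself amenable. As $g\in G\setminus H$ was arbitrary, $H\in \Sub_{\rm a-norm}(G)$. No serious obstacle arises here; the only point requiring a moment of care is the continuity of the intersection operation, which is transparent once one works with characteristic functions.
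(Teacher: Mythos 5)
Your proof is correct and follows essentially the same route as the paper's: continuity of $H\mapsto H\cap gHg^{-1}$ in the Chabauty topology, the observation that $g\notin H$ forces $g\notin H_n$ eventually, and closedness of $\Sub_{\rm a}(G)$ (Corollary~\ref{Cor-AmenableSubgroups}). You merely spell out, via characteristic functions, two steps the paper treats as immediate.
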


\begin{proof}
Since   the conjugate of an a-normal subgroup is obviously again a-normal,
$\Sub_{\rm a-norm}(G)$ is $G$-invariant.

Let $(H_n)_n$ be  a sequence in  $\Sub_{\rm a-norm}(G)$ converging to $H\in \Sub(G).$ 
Let $x\in G\setminus H.$  Since $$\lim_{n} H_n \cap xH_n x^{-1} =H \cap xH x^{-1}$$
and since,  by Corollary~\ref{Cor-AmenableSubgroups},  $\Sub_{a}(G)$ is closed in $\Sub(G)$,
it suffices to show that $x\notin H_n$ for $n$ large enough.

Assume, by contradiction, that $x\in H_{n_k}$ for a subsequence $(H_{n_k})_k$ of $(H_n)_n.$
Then $x\in H$, by the definition of the Chabauty topology, and this is a contradiction.
\end{proof}

\begin{corollary}
\label{Cor1-Theo-RepRigidSubgroups}
Let $H$ be a non-amenable a-normal subgroup of $G.$ Then $H$ is conjugation rigid: 
if $L\in \Sub(G)$ is such that $\overline{\C(L)}= \overline{\C(H)},$ then $L$ is conjugate to $H.$
\end{corollary}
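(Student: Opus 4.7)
The plan is to show that any $L\in\Sub(G)$ with $\overline{\C(L)}=\overline{\C(H)}$ inherits the two standing hypotheses---non-amenability and a-normality---and then to exploit a-normality of $L$ to force a Chabauty-convergent sequence $x_n L x_n^{-1}\to H$ to become eventually stationary, which will exhibit $H$ as a conjugate of $L$.

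First I would check that $L$ is itself non-amenable and a-normal. A-normality is immediate from Lemma~\ref{Lem-A-NormalSubg}: since $\Sub_{\rm a-norm}(G)$ is closed and $G$-invariant, $L\in \overline{\C(L)}=\overline{\C(H)}\subset \Sub_{\rm a-norm}(G)$. For non-amenability, if $L$ were amenable then $\C(L)\subset \Sub_{\rm a}(G)$, and closedness of $\Sub_{\rm a}(G)$ (Corollary~\ref{Cor-AmenableSubgroups}) would force $H\in \overline{\C(L)}\subset \Sub_{\rm a}(G)$, contradicting the non-amenability of $H$. Then I would pick a finite $F\subset H$ with $\langle F\rangle$ non-amenable---such an $F$ exists because amenability is stable under directed unions of subgroups, so $H$ being non-amenable forces some finitely generated subgroup to be non-amenable.

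The main step is the following. Let $(x_n)_n\subset G$ satisfy $x_n L x_n^{-1}\to H$ in the Chabauty topology, which is possible since $H\in \overline{\C(L)}$. For all sufficiently large $n$, $F\subset x_n L x_n^{-1}$, so for $n,m$ large, setting $y:=x_n^{-1}x_m$,
$$
\langle F\rangle \subset x_n L x_n^{-1}\cap x_m L x_m^{-1}=x_n\bigl(L\cap y L y^{-1}\bigr)x_n^{-1}.
$$
Hence $L\cap y L y^{-1}$ is non-amenable, and a-normality of $L$ forces $y\in L$, i.e.\ $x_n L=x_m L$. Therefore $x_n L x_n^{-1}$ is eventually a fixed conjugate $x_N L x_N^{-1}$, which then coincides with its Chabauty limit $H$. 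The key idea, where both hypotheses enter, is that the non-amenable finitely generated witness $\langle F\rangle$ sits inside the pairwise intersection of conjugates of $L$, so a-normality of $L$ collapses the coset sequence $(x_n L)_n$. In particular, no finite-generation hypothesis on $H$ or $L$ is required, in contrast with the Kazhdan analogue Proposition~\ref{Prop-RigidFinGenSubgroups}.
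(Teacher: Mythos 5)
Your proof is correct, but it takes a genuinely different route from the paper's. The paper deduces the corollary from its representation-theoretic machinery: after the same preliminary reductions (both you and the paper use Lemma~\ref{Lem-A-NormalSubg} and Corollary~\ref{Cor-AmenableSubgroups} to transfer a-normality and non-amenability to $L$), it invokes Proposition~\ref{Prop-ANormalSpectralGap} to place $H$ and $L$ in $\Sub_{\rm sg}(G)$, Corollary~\ref{Cor1} to upgrade $\overline{\C(L)}=\overline{\C(H)}$ to weak equivalence of $\la_{G/H}$ and $\la_{G/L}$, and then Theorem~\ref{Theo-RepRigidSubgroups}.iii (whose proof rests on traceability and Mackey's theorem) to extract the conjugacy. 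You instead argue directly in the Chabauty topology, in the spirit of Proposition~\ref{Prop-RigidFinGenSubgroups}: a finitely generated non-amenable subgroup $\langle F\rangle\leq H$ (which exists since amenability passes to directed unions) eventually embeds in every term of a convergent sequence $x_nLx_n^{-1}\to H$, so the pairwise intersections $x_n\bigl(L\cap yLy^{-1}\bigr)x_n^{-1}$ with $y=x_n^{-1}x_m$ are non-amenable, and a-normality of $L$ forces $y\in L$; the sequence is therefore eventually constant and its limit $H$ is a conjugate of $L$. Each step checks out. What each approach buys: the paper's route obtains the corollary as a by-product of the strictly stronger statement that weak \emph{representation} equivalence (not merely weak conjugacy) already forces conjugacy for such $H$; your route is elementary and entirely independent of Section 4's $C^*$-algebraic input, and it cleanly substitutes the existence of a finitely generated non-amenable subgroup for the finite-generation hypothesis of Proposition~\ref{Prop-RigidFinGenSubgroups} and a-normality for strong self-commensuration (consistently with Remark~\ref{Rem-Def-A-normalSubgroups}.ii).
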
 

\begin{proof}
Let  $L\in \Sub(G)$ be such that $\overline{\C(L)}= \overline{\C(H)}.$
Then $L$ is a-normal, by Proposition~\ref{Lem-A-NormalSubg}.
We claim that $L$ is non-amenable. Indeed, since $H\in \overline{\C(L)}$, it would  follow
otherwise that $H$ is amenable (see Corollary~\ref{Cor-AmenableSubgroups}) and this contradicts our assumption.
The claim now follows from Corollary~\ref{Cor1} and Theorem~\ref{Theo-RepRigidSubgroups}.
\end{proof}

\section{Point-stabilizers of topological actions}
\label{S:Actions}
In this section, we study classes of subgroups of a group $G$ which arise as  stabilizers of points for 
certain actions $G\curvearrowright X$ on a topological space $X.$
\subsection{Continuity points of the stabilizer map}
\label{SS:Actions}
Given an action $G\curvearrowright X$  of $G$ on a Hausdorff topological space $X$
by homeomorphisms, let $\Stab: X\to \Sub(G)$ be the $G$-equivariant map  defined by 
$$
\Stab(x)= G_x \qquad \text{for}\qquad x\in X,
$$
where $G_x = \{g\in G : gx=x\}$ is the stabilizer of $x$ in $G.$ 

For $x\in X,$ denote by $G_x^0$ the (normal) subgroup of $G_x$ consisting of all $g\in G$
for which there exists a neighbourhood $U_g$ of $x$ such that $gy=y$ for all $y\in U_g.$ 
It is easy to show that $\Stab: X\to \Sub(G)$  is continuous at $x\in X$ if and only if 
$G_x=G_x^0$; moreover, the set  of continuity points of this map
is a dense $G_\delta$-subset of $X$ if $X$ is a Baire space (for the elementary proof of these facts,  see Lemmas 2.2 and Proposition 2.4 in \cite{LeM}).

The following result, announced in Proposition~\ref{Pro-Def-RepEquivalentSub}, 
shows that, in particular,   the equivalence relations 
$\sim_{\rm w-conj}$ and  $\sim_{\rm rep}$ on  $\Sub(G)$  are not comparable in general.

Recall (see \cite{CFP}) that Thompson's group $T$ is the group of orientation preserving homeomorphisms
of  the circle $\mathbf{S}^1=\mathbf{R}/\mathbf{Z},$ which are piecewise linear, with only
finitely many breakpoints, all at dyadic rationals, and with  slopes all of the form  $2^k$ for $k\in \mathbf{Z}.$
Thompson's group $F$ is the stabilizer in $T$ of the point $1\in \mathbf{S}^1.$

As is well-known and easy to check, the action $T\act  \mathbf{S}^1$ is minimal.
Recall that an action $G\act X$ of a group $G$  on a  topological space $X$
is said to be \textbf{minimal} if the $G$-orbit $Gx$ is dense in $X$ for every $x\in X$. 

\begin{theorem}
\label{thm:Furst-bnd-non-C*-simple}
Let $G$ be Thompson's group $T$ and consider its action on $\mathbf{S}^1.$  Let $x, y \in \mathbf{S}^1$
and let $\sigma$ and  $\rho$ be finite dimensional irreducible unitary representations of $G_x$ and $G_y$, respectively.

\begin{itemize}
\item[(i)]  The representation $\Ind_{G_x}^G\sigma$ is irreducible.
\item[(ii)]  The representations $\Ind_{G_x}^G \sigma$ and $\Ind_{G_y}^G \rho$  are equivalent if and only if  there exists $g\in G$ such that $gx=y$ and $\rho$ is equivalent to $\sigma^{g^{-1}}$.
 \item[(iii)] Assume that $y\notin Gx;$ then  $G_x \not\sim_{\rm rep} G_y.$
 \item[(iv)]  Assume that $x,y\in  \mathbf{S}^1\setminus \{e^{2\pi i\theta} \mid \theta\in \QQ\}$; then  $G_x \sim_{\rm w-conj} G_y$.
\end{itemize}
\end{theorem}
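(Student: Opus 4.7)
The plan is to deduce parts (i), (ii), (iii) from Mackey's theorem once $G_x$ is shown to be self-commensurating in $G$, and to prove (iv) by combining the minimality of $G\act \mathbf{S}^1$ with continuity of the stabilizer map at irrational points.

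To verify self-commensuration, fix $g\in G$ with $gx\ne x$ and set $y=gx$. The subgroup $G_x\cap gG_xg^{-1}=G_x\cap G_y$ is the $G_x$-stabilizer of $y$, so its index in $G_x$ equals the cardinality of the $G_x$-orbit of $y$. Now $G_x$ contains the subgroup $G_x^0$ of elements that are the identity on a neighbourhood of $x$, which acts on $\mathbf{S}^1\setminus\{x\}$ with infinite orbits (it contains subgroups isomorphic to Thompson's $F$ supported on any dyadic sub-interval not meeting $x$). Hence the $G_x$-orbit of $y$ is infinite, so $[G_x:G_x\cap gG_xg^{-1}]=\infty$ and $G_x\in\Sub_{\rm sc}(G)$. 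Part (i) is then Theorem~\ref{Theo-Mackey}(i), and Theorem~\ref{Theo-Mackey}(ii) shows that the equivalence of $\Ind_{G_x}^G\sigma$ and $\Ind_{G_y}^G\rho$ holds iff there exists $g\in G$ with $g^{-1}G_yg=G_x$ and $\sigma\sim\rho^g$. Since $g^{-1}G_yg=G_{g^{-1}y}$, it only remains to check that $G_x=G_z$ forces $x=z$; but if $x\ne z$, any element of $G_x^0$ supported in a small interval around $z$ fixes $x$ while moving $z$, contradicting $G_x=G_z$. This proves (ii), and (iii) is the special case $\sigma=\mathds{1}_{G_x}$, $\rho=\mathds{1}_{G_y}$ of (ii).

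For (iv), the crucial step is the rigidity claim
\begin{equation*}
G_y=G_y^0 \qquad \text{for every irrational } y\in\mathbf{S}^1.
\end{equation*}
Indeed, if $g\in G_y$, then $y$ lies in the interior of some maximal affine piece $[a,b]$ of $g$ with $a,b$ dyadic breakpoints, where $g(z)=2^kz+c$. The equation $g(y)=y$ forces $c=(1-2^k)y$, and since elements of $G$ map dyadic rationals to dyadic rationals, $g(a)=2^ka+(1-2^k)y$ must be dyadic; as $y$ is irrational and $1-2^k$ is a nonzero integer when $k\ne 0$, this compels $k=0$. Hence $g$ is the identity on $[a,b]$, so $g\in G_y^0$. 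By the continuity criterion recalled in Subsection~\ref{SS:Actions}, this means $\Stab$ is continuous at $y$. Given irrational points $x,y$, minimality of $G\act\mathbf{S}^1$ provides $(g_n)$ in $G$ with $g_nx\to y$; continuity of $\Stab$ at $y$ yields $g_nG_xg_n^{-1}=G_{g_nx}\to G_y$ in the Chabauty topology, so $G_y\in\overline{\C(G_x)}$, and by symmetry $\overline{\C(G_x)}=\overline{\C(G_y)}$.

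The main obstacle is the slope-and-breakpoint calculation establishing $G_y=G_y^0$ for irrational $y$; it relies on the specific arithmetic of $G$ (powers-of-two slopes, dyadic breakpoints, preservation of dyadics), and it fails at dyadic points, explaining why part (iv) is restricted to the irrational case.
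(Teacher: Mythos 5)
Your proof is correct and follows essentially the same route as the paper: self-commensuration of the stabilizers plus Mackey's theorem for (i)--(iii), and continuity of the stabilizer map at irrational points combined with minimality of $T\act\mathbf{S}^1$ for (iv). The only real difference is that you prove directly (via copies of $F$ supported on dyadic arcs avoiding $x$, and the slope/breakpoint computation showing $G_y=G_y^0$ at irrational $y$) what the paper delegates to citations of \cite{CFP} and \cite{GW}.
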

\begin{proof}
Assume that $x\neq y.$  We claim that  $G_x \cap G_y$ has infinite index in $G_x$ or, equivalently, 
that  the $G_x$-orbit of $y$ is infinite. Indeed, it follows from Lemma 4.2 in \cite{CFP}  that every  orbit  
of $G_1\cap G_x=F\cap G_x$  in $\mathbf{S}^1\setminus \{x, 1\}$ is infinite and it is clear that the $G_x$-orbit of $1$
is infinite if $x\neq 1.$

Let $g\in G\setminus G_x.$ Then, by what we have just shown,
$G_x\cap gG_x g^{-1}= G_x\cap G_{gx}$ has infinite index  in $G_x.$ Hence, $\Omm_G(G_x)=G_x$.

Moreover, we have $G_y= gG_x g^{-1}$ for some $g\in G$ if and only if $G_y=G_{gx}$, that is, if and only if
$y=gx$. So, Items (i) and (ii) follow from Mackey's Theorem \ref{Theo-Mackey}.

Item (iii) is a direct consequence of Item (ii).

To prove Item (iv), note that $\mathbf{S}^1\setminus \{e^{2\pi i\theta} \mid \theta\in \QQ\}$ is contained in the set $C$ of continuity points of the map $\Stab: \mathbf{S}^1\to \Sub(G)$.
 
 Let  $\mathcal{S}_G( \mathbf{S}^1)$ be the closure of  $\{G_z\mid z\in C\}$  in $\Sub(G)$.
Since $T\act  \mathbf{S}^1$ is minimal,  the action  $G\act\mathcal{S}_G( \mathbf{S}^1)$ is minimal,  by \cite[Proposition 1.2]{GW}.
 It follows that $G_x \sim_{\rm w-conj} G_y$ for all $x, y\in C$.
\end{proof}

\begin{remark}
\label{Rem-URS}
Consider the action of Thompson group $T$ on $\mathbf{S}^1$;
the closure in $\Sub(T)$ of the set $\{T_z\mid z\in C\}$, 
which appeared in the proof of Theorem~\ref{thm:Furst-bnd-non-C*-simple},
is the stabilizer URS (short for ``uniformly recurrent subgroup") 
for  $T\act \mathbf{S}^1$ in the sense of \cite{GW};
 this URS has been completely described in \cite[Proposition 4.10]{LeM}.
 \end{remark}

We apply now  the result (Proposition~\ref{Pro:FellChabautyTop}) on the continuity of the
map 
$$
\Lambda: \Sub(G) \to \Rep (G), \qquad H\mapsto \la_{G/H}
$$ 
 to point-stabilizers of a topological  action of a group $G.$

\begin{proposition}
\label{Cor1-Action}
Let $G\curvearrowright X$ be an action  of $G$ on a Hausdorff topological space $X.$
\begin{itemize}
\item[(i)] Let $x\in X$ and let $z\in \overline{G x}$ be such that $G_z=G_z^0$.
Then $G_z$ belongs to the closure of the conjugacy class $\C(G_x)$
of $G_x$; in particular, $\la_{G/G_z} \prec \la_{G/G_x}$.
\item[(ii)] Let  $x\in X$ be such that $ \overline{G x}$
contains a point with trivial stabilizer and $H$ be a subgroup of $G_x.$
Then $\{e\}$ belongs to the closure 
of $\C(H)$ and $\la_{G} \prec \la_{G/H}$. 
\end{itemize}
\end{proposition}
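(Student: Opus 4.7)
The plan is to reduce both items to Corollary~\ref{Cor1}(i), which converts Chabauty-closure membership $L\in \overline{\C(H)}$ into weak containment $\la_{G/L}\prec \la_{G/H}$. For (i), the hypothesis $G_z=G_z^0$ is precisely the condition (recalled in the paragraph preceding Theorem~\ref{thm:Furst-bnd-non-C*-simple}) under which the stabilizer map $\Stab\colon X\to \Sub(G)$ is continuous at $z$. Since $z\in \overline{Gx}$, I would choose a net $(g_i)_{i\in I}$ in $G$ with $g_i x\to z$ and apply continuity to conclude
$$
g_i G_x g_i^{-1} \;=\; \Stab(g_i x)\; \longrightarrow\; \Stab(z)\; =\; G_z
$$
in the Chabauty topology. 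As each $g_i G_x g_i^{-1}$ lies in $\C(G_x)$, this places $G_z$ in $\overline{\C(G_x)}$, and Corollary~\ref{Cor1}(i) then delivers $\la_{G/G_z}\prec \la_{G/G_x}$.

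For (ii), let $y\in \overline{Gx}$ satisfy $G_y=\{e\}$; then trivially $G_y=G_y^0$, so part (i) applied at $z=y$ produces a net $(g_i)$ with $g_i G_x g_i^{-1}\to \{e\}$ in Chabauty. It remains to transfer this convergence from $G_x$ to an arbitrary subgroup $H\leq G_x$. Working with characteristic functions, for any $g\in G\setminus\{e\}$ the inclusion $g_i H g_i^{-1}\subset g_i G_x g_i^{-1}$ forces $\Un_{g_i H g_i^{-1}}(g)\to 0$, while $\Un_{g_i H g_i^{-1}}(e)=1$ is automatic. Hence $g_i H g_i^{-1}\to \{e\}$ pointwise, so $\{e\}\in \overline{\C(H)}$, and Corollary~\ref{Cor1}(i) yields $\la_G=\la_{G/\{e\}}\prec \la_{G/H}$.

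I do not expect a genuine obstacle: both assertions are continuity-of-stabilizer statements funneled through the Fell-continuity of $\Lambda$ established in Proposition~\ref{Pro:FellChabautyTop}. The only step that is not entirely mechanical is the small monotonicity observation in (ii), that shrinking from $G_x$ to a subgroup $H$ preserves Chabauty-convergence to the trivial group; this is immediate from the pointwise definition of the Chabauty topology on $\Sub(G)\subset \{0,1\}^G$.
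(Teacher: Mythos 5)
Your proposal is correct and follows essentially the same route as the paper: continuity of $\Stab$ at $z$ (equivalent to $G_z=G_z^0$) plus equivariance gives $G_z\in\overline{\C(G_x)}$, and Corollary~\ref{Cor1}(i) converts this to weak containment, with part (ii) obtained by specializing to a trivial stabilizer and using the monotonicity of Chabauty convergence under passing to a subgroup $H\leq G_x$. You merely spell out with nets and characteristic functions what the paper leaves as "it follows from the definition of the Chabauty topology."
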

\begin{proof}
For Item (i), note that by assumption, $\Stab: X\to \Sub(G)$  is continuous at $z$ and $z\in \overline{G x}$. Thus, we have $G_z\in \overline{\C(G_x)}$, and Proposition~\ref{Pro:FellChabautyTop} implies that $\la_{G/G_z} \prec \la_{G/G_x}.$  

To show Item (ii), let $z\in \overline{G x}$ be  such that $G_z=\{e\}.$ We have trivially $G_z=G_z^0$; hence, $\{e\}\in \overline{\C(G_x)},$ by Item (i).
It follows from the definition of the Chabauty topology that  $\{e\}\in \overline{\C(H)}$
 for every subgroup $H$ of $G_x$. This, together with   Proposition~\ref{Pro:FellChabautyTop},
 implies that $\la_{G} \prec \la_{G/H}$.
\end{proof} 

Recall that an action $G\act X$ of $G$ on a Hausdorff topological space $X$  is called \textbf{topologically free} if, for every  $g \in G\setminus \{e\},$ the set  ${\rm Fix}(g)=\{x\in X \mid gx=x\}$ of fixed points of  $g$ has empty interior in $X$.
Observe that,  for such an action, the set of points $x\in X$ with $G_x=\{e\}$ is a dense (and in particular, non empty) 
subset of $X,$ provided $X$ is  a Baire space; indeed,  the set of continuity points of the stabilizer map $X\to \Sub(G)$ is 
dense, as mentioned at the beginning of this section.

A Hausdorff topological space $X$ is called \emph{extremally disconnected} (or \emph{Stonean}) if the closure of every open set of $X$ is open.  A relevant fact about these spaces is the following result from  \cite{F1971}.

\begin{lemma}
\label{Lem-ExtremallyDisconnected}
Let $X$ be an  extremally disconnected topological space. The fixed point set of every  homeomorphism of 
$X$ is open.  In particular, if $G\act X$ is an action of a group $G$ on $X$, then $G_x = G_x^0$ for every  $x\in X$
and  so the map $\Stab: X\to \Sub(G)$ is continuous. 
\end{lemma}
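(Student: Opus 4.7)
The plan is to first prove the statement about the fixed-point set of a single homeomorphism, and then derive the consequences about stabilizers and about the map $\Stab$. The second statement is essentially formal: given an action $G\act X$ and $x\in X$, every $g\in G_x$ induces a homeomorphism $y\mapsto gy$ of $X$ whose fixed-point set $\mathrm{Fix}(g)$ contains $x$; granting that $\mathrm{Fix}(g)$ is open, we obtain an open neighborhood of $x$ on which $g$ acts trivially, so $g\in G_x^0$. Combined with the obvious inclusion $G_x^0\subseteq G_x$, this yields $G_x=G_x^0$, and continuity of $\Stab$ at $x$ follows from the criterion quoted from \cite{LeM} just before the lemma.

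For the main statement, let $\varphi:X\to X$ be a homeomorphism, $F=\{x\in X:\varphi(x)=x\}$, and $A=X\setminus F$. I would first show that $A$ is open, using only Hausdorffness: for $y\in A$, choose disjoint open neighborhoods $U$ of $y$ and $V$ of $\varphi(y)$, and set $W_y:=U\cap \varphi^{-1}(V)$; this is an open neighborhood of $y$ with $\varphi(W_y)\subseteq V$ and hence $W_y\cap\varphi(W_y)=\emptyset$, so $W_y\subseteq A$. Next, I would exploit extremal disconnectedness through its standard consequence that disjoint open sets have disjoint closures: indeed, $\overline{W_y}$ is open by hypothesis, so its complement is a closed set containing $\varphi(W_y)$, and therefore also contains $\overline{\varphi(W_y)}=\varphi(\overline{W_y})$. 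Thus $\overline{W_y}$ is clopen, still satisfies $\varphi(\overline{W_y})\cap\overline{W_y}=\emptyset$, and is contained in $A$. In summary, $A$ is covered by clopen subsets of itself, each of which is disjoint from its own $\varphi$-image.

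The remaining and main step is to pass from this local clopen covering to global closedness of $A$. This is the content of Frolik's theorem in \cite{F1971}, which I would invoke. The main obstacle, and the reason a naive union does not suffice, is that for two clopens $C,C'\subseteq A$ with $\varphi(C)\cap C=\varphi(C')\cap C'=\emptyset$, one can have $\varphi(C)\cap C'\neq\emptyset$, so $C\cup C'$ need not satisfy the disjointness property. Frolik resolves this by applying Zorn's lemma to the family of pairwise disjoint triples $(X_1,X_2,X_3)$ of clopen subsets of $A$ with $\varphi(X_i)\cap X_i=\emptyset$ for each $i$; using a three-coloring argument applied to the local clopens $\overline{W_y}$, any maximal such triple must exhaust $A$, so that $A=X_1\sqcup X_2\sqcup X_3$ is a finite union of clopens and is therefore itself clopen. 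Consequently $F$ is open, which completes the proof.
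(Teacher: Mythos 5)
The paper offers no proof of this lemma beyond the attribution to Frol\'{\i}k \cite{F1971}, and your argument likewise rests on Frol\'{\i}k's theorem for the essential step (passing from the local clopen cover of the complement of the fixed-point set to its global clopenness); the surrounding reductions you supply --- openness of $X\setminus\mathrm{Fix}(\varphi)$ from Hausdorffness, the upgrade to clopen neighbourhoods disjoint from their images via extremal disconnectedness, and the formal deduction of $G_x=G_x^0$ and of the continuity of $\Stab$ from the criterion quoted from \cite{LeM} --- are all correct. So the proposal is correct and takes essentially the same route as the paper.
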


The following corollary is an immediate consequence of Lemma~\ref{Lem-ExtremallyDisconnected} and  Proposition~\ref{Cor1-Action}.

\begin{corollary}\label{prop:ext-discon}
Let $G\act X$ be a minimal  action of $G$ on an extremally disconnected topological space $X$. Then 
$G_x \sim_{\rm w-rep} G_y$ for all $x, y \in X$.
\end{corollary}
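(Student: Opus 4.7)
The proof essentially packages together the two preceding ingredients in the section, namely Lemma~\ref{Lem-ExtremallyDisconnected} (extremally disconnected spaces make the stabilizer map $\Stab$ continuous) and Proposition~\ref{Cor1-Action}(i) (continuity points of $\Stab$ that lie in $\overline{Gx}$ automatically land in $\overline{\C(G_x)}$), together with minimality. My plan is therefore very short.

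First, I would invoke Lemma~\ref{Lem-ExtremallyDisconnected} to conclude that $G_z = G_z^0$ for every $z \in X$; this is the crucial structural consequence of extremal disconnectedness and it means that every point of $X$ is a continuity point of the stabilizer map $\Stab: X\to \Sub(G)$. Next, I would use the minimality hypothesis: for any two points $x,y \in X$, we have simultaneously $y \in \overline{Gx}$ and $x \in \overline{Gy}$.

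Now I would apply Proposition~\ref{Cor1-Action}(i) twice, exchanging the roles of $x$ and $y$. Since $y \in \overline{Gx}$ and $G_y = G_y^0$, the proposition gives $G_y \in \overline{\C(G_x)}$ and $\la_{G/G_y} \prec \la_{G/G_x}$. Symmetrically, since $x \in \overline{Gy}$ and $G_x = G_x^0$, we get $G_x \in \overline{\C(G_y)}$ and $\la_{G/G_x} \prec \la_{G/G_y}$. Combining these two inclusions shows that $\la_{G/G_x}$ and $\la_{G/G_y}$ are weakly equivalent, i.e.\ $G_x \sim_{\rm w\text{-}rep} G_y$.

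In fact, the same two inclusions show that $\overline{\C(G_x)} = \overline{\C(G_y)}$ (each closure contains the other's $G$-orbit and is itself $G$-invariant and closed), so we even obtain $G_x \sim_{\rm w\text{-}conj} G_y$; then $\sim_{\rm w\text{-}rep}$ follows from Corollary~\ref{Cor1}(ii) as a free consequence. There is no genuine obstacle in this argument; the only subtlety worth checking is that Proposition~\ref{Cor1-Action}(i) really applies to \emph{every} pair $x,y$, which is exactly what Lemma~\ref{Lem-ExtremallyDisconnected} guarantees in the extremally disconnected setting (without it, one would only control weak conjugacy on the dense $G_\delta$ of continuity points).
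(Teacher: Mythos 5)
Your proof is correct and is exactly the argument the paper intends: the paper simply declares the corollary an immediate consequence of Lemma~\ref{Lem-ExtremallyDisconnected} and Proposition~\ref{Cor1-Action}, and your write-up supplies precisely those details (continuity of $\Stab$ everywhere, minimality giving $y\in\overline{Gx}$ and $x\in\overline{Gy}$, and two applications of Proposition~\ref{Cor1-Action}(i)). Your closing observation that one even gets $G_x \sim_{\rm w-conj} G_y$ is also consistent with how the paper argues the analogous statement in Proposition~\ref{prop:Furst-bnd-non-C*-simple}(ii).
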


\subsection{Boundary actions}
\label{SS-BoudaryActions}
We turn our attention to subgroups which are point-stabilizers of boundary actions of a countable group $G.$

Let $X$ be a compact topological space.
Recall that an action  $G\act X$   of $G$ on  $X$ is said to be \textbf{strongly proximal} if, for every probability $\nu\in\pr(X)$ on $X$,  the weak* closure of the orbit $G\nu$ contains some point measure $\delta_x$, $x\in X$.
The action $G\act X$ is called a \textbf{boundary action} or a \textbf{$G$-boundary} if it is both minimal and strongly proximal.

By \cite{Furstenberg}, every group $G$ admits a universal boundary $\partial_F G$, called the \textbf{Furstenberg boundary}:
 $\partial_F G$ is a  $G$-boundary and every $G$-boundary is a continuous $G$-equivariant image of $\partial_F G$.
 Moreover, it is known that  $\partial_F G$ is extremally disconnected (see \cite[Remark 3.16]{KalKen} or \cite[Proposition 2.4]{BKKO}).

 It follows from Lemma~\ref{Lem-ExtremallyDisconnected}
that, if $X$ is extremally disconnected, then $G\act X$ is topologically free
if and only if $G\act X$ is \textbf{free}, that is, ${\rm Fix}(g)=\emptyset$ for every $g \in G\setminus \{e\}$.

$C^*$-simple groups have been characterized in \cite{KalKen} in terms  their actions on boundaries 
as follows (see also \cite{BKKO}).
\begin{theorem} \label{Theo-KK} 
\cite{KalKen}
Let $G$ be a discrete group. The following properties are equivalent:
\begin{itemize}
\item[(i)] $G$ is $C^*$-simple;
\item [(ii)] the action $G\act \partial_F G$  is  free;
\item[(iii)] there exists a topologically free boundary  action $G\act X$.
\end{itemize}
\end{theorem}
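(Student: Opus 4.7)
The plan is to prove the three implications (ii)$\Rightarrow$(iii)$\Rightarrow$(i)$\Rightarrow$(ii), with the Furstenberg boundary $\partial_F G$ and its extremal disconnectedness serving as the central bridge. The implication (ii)$\Rightarrow$(iii) is immediate: $\partial_F G$ is itself a $G$-boundary by definition, and by Lemma~\ref{Lem-ExtremallyDisconnected} any free action of $G$ on an extremally disconnected space is automatically topologically free.

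For (iii)$\Rightarrow$(i), I would fix a topologically free boundary action $G\act X$ and an arbitrary state $\varphi$ on $\CRed(G)$. Using strong proximality together with a Markov--Kakutani averaging, I take a weak-$*$ cluster point of convex combinations of conjugate translates $g\cdot\varphi$ to produce a $G$-equivariant unital completely positive map $\Phi_\varphi : \CRed(G)\to C(X)$. The decisive step is to show that topological freeness forces $\Phi_\varphi(\lambda_G(g))(x)=0$ for every $g\in G\setminus\{e\}$ and every $x\in X$ with $G_x=\{e\}$: at such $x$, one uses $G$-equivariance together with strong proximality to push $\Phi_\varphi(\lambda_G(g))$ toward a point mass while simultaneously invoking topological freeness to kill the diagonal contribution, so that the evaluation collapses to $\tau_G(\lambda_G(g))=0$. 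Density of the trivial-stabilizer points gives $\Phi_\varphi=\tau_G(\cdot)\,1_{C(X)}$, so that $\tau_G$ lies in the weak-$*$ closed convex hull of $\{g\cdot\varphi \mid g\in G\}$. Now, for any proper two-sided closed ideal $J\subset\CRed(G)$, one picks $\varphi$ vanishing on $J$; since $J$ is two-sided each translate $g\cdot\varphi$ also vanishes on $J$, and hence so does $\tau_G$. As $\tau_G$ is faithful on $\CRed(G)$, this forces $J=0$, proving simplicity.

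For (i)$\Rightarrow$(ii), I argue by contrapositive. Assuming some $g\in G\setminus\{e\}$ has a fixed point in $\partial_F G$, minimality of $G\act\partial_F G$ combined with the organization of stabilizers into a uniformly recurrent subgroup yields a nontrivial amenable URS $\mathcal{Y}\subset\Sub(G)$. By Corollary~\ref{Cor-AmenableInduced}, the associated quasi-regular representations are weakly contained in $\lambda_G$, and averaging them against the unique $G$-invariant probability measure on $\mathcal{Y}$ produces a tracial state on $\CRed(G)$ distinct from $\tau_G$, contradicting the unique-trace property enjoyed by every $C^*$-simple group.

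The main obstacle is the averaging step in (iii)$\Rightarrow$(i): verifying that $G$-equivariance, strong proximality and topological freeness together force $\Phi_\varphi$ to factor through the canonical trace. This Powers-type argument, transferred to the boundary setting, is the genuine technical core of the Kalantar--Kennedy theorem and is where the boundary hypothesis is used in an essential, non-formal way.
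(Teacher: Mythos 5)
The paper does not prove this theorem; it imports it from \cite{KalKen}. Nevertheless, two of your three implications are sound and align with machinery the paper does develop. The implication (ii)$\Rightarrow$(iii) is immediate (free always implies topologically free; the appeal to Lemma~\ref{Lem-ExtremallyDisconnected} is superfluous there --- that lemma is needed for the \emph{converse} on extremally disconnected spaces). Your (iii)$\Rightarrow$(i) is essentially the Haagerup-style Powers averaging that the paper itself carries out, in greater generality, in Steps 1--5 of the proof of Theorem~\ref{Theo-C*IndRep}: specializing that argument to $H=\{e\}$ (see Remark~\ref{Rem-Theo-C*IndRep}(ii)) gives exactly your claim that $\tau_G$ lies in the weak-$*$ closure of the translates of any state, and your concluding step (a state killing a proper ideal $J$ forces $\tau_G|_J=0$, hence $J=0$ by faithfulness) is correct. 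Your packaging via a $G$-equivariant u.c.p.\ map into $C(X)$ is looser than the state-theoretic version, but the idea is right and you correctly flag it as the technical core.

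The genuine gap is in (i)$\Rightarrow$(ii), and it is not merely a missing detail: the strategy cannot work. You propose to show, contrapositively, that a non-free action $G\act\partial_F G$ yields a tracial state on $\CRed(G)$ distinct from $\tau_G$. First, the stabilizer URS need not carry any $G$-invariant probability measure (let alone a unique one); minimal compact $G$-spaces for non-amenable $G$ generically admit no invariant measure, so the averaging you describe has nothing to average against. Second, and fatally, the implication ``non-free boundary action $\Rightarrow$ second trace'' is false: by \cite{BKKO}, uniqueness of the trace on $\CRed(G)$ is equivalent to triviality of the amenable radical, and the groups of \cite{LeBoudec} cited in this very paper have trivial amenable radical (hence unique trace) yet are not $C^*$-simple, i.e., their Furstenberg boundary action is not free. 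So no contradiction with unique trace can be extracted from non-freeness. The standard repair is the weak-containment argument the paper uses in Proposition~\ref{prop:Furst-bnd-non-C*-simple}: if $G_x\neq\{e\}$ for some $x\in\partial_F G$, then $G_x$ is amenable, so $\la_{G/G_x}\prec\la_G$ (Corollary~\ref{Cor-AmenableInduced}) and $\la_{G/G_x}$ factors through $\CRed(G)$; on the other hand $\la_G\not\prec\la_{G/G_x}$ by \cite[Proposition 3.5]{BKKO}, so the kernel of this factorization is a nonzero proper ideal of $\CRed(G)$, contradicting simplicity --- no traces are needed.
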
 

 Recall that the \textbf{amenable radical}  of a group $G$ is the unique largest amenable normal subgroup of $G.$
 A  long   well-known fact  (see \cite[Lemma p.289]{BCH}) is that a group with a non-trivial amenable radical is not $C^*$-simple;
 examples of  non $C^*$-simple groups with a trivial amenable radical have been given in \cite{LeBoudec}.
\begin{proposition}
\label{prop:Furst-bnd-non-C*-simple}
Let $G$ be a  countable group which is not $C^*$-simple  and let $G\act \partial_F G$ be the Furstenberg boundary action. 
\begin{itemize}
\item[{\normalfont (i)}] For  every $x \in \partial_F G$, the stabilizer $G_x$ is non-trivial and amenable.
\item[{\normalfont (ii)}] For any $x, y \in \partial_F G$, we have $G_x \sim_{\rm w-conj} G_y .$
\item[{\normalfont (iii)}] For every $x \in \partial_F G$, we have $
G_x \not\sim_{\rm w-rep} \{e\}.$
\item[{\normalfont (iv)}]  Assume that the amenable radical  of $G$ is trivial. Then there exists
an uncountable subset $A$ of $ \partial_F G$ such that, for every $x,y\in A$ with $x\neq y,$ we have $G_x \not\sim_{\rm conj} G_y.$
\end{itemize}
\end{proposition}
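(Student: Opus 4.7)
The backbone of my approach is to observe that, since $\partial_F G$ is extremally disconnected, Lemma~\ref{Lem-ExtremallyDisconnected} makes the stabilizer map $\Stab : \partial_F G \to \Sub(G)$ continuous. It is also $G$-equivariant, and $\partial_F G$ is compact, so the image $\Phi := \Stab(\partial_F G)$ is a closed $G$-invariant subset of $\Sub(G)$; the minimality of $G \act \partial_F G$ then makes $\Phi$ a minimal closed $G$-invariant subset of $\Sub(G)$, that is, a uniformly recurrent subgroup (URS). From this observation (i) is quick: Theorem~\ref{Theo-KK} says that $G$ not being $C^*$-simple means $G \act \partial_F G$ is not free, so $\{e\} \notin \Phi$, and minimality of $\Phi$ forces $G_x \neq \{e\}$ for every $x \in \partial_F G$. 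The amenability of each $G_x$ is a classical property of stabilizers on the Furstenberg boundary (see \cite{BKKO}). Item (ii) is immediate from the URS property: $\overline{\C(G_x)} = \Phi = \overline{\C(G_y)}$ for all $x, y$, which is exactly $G_x \sim_{\rm w-conj} G_y$.

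For (iii), amenability of $G_x$ and Corollary~\ref{Cor-AmenableInduced} give $\la_{G/G_x} \prec \la_G$, so it suffices to rule out $\la_G \prec \la_{G/G_x}$. If the latter containment held, the canonical trace $\tau_G$ would extend to a state on $C^*_{\la_{G/G_x}}(G)$; equivalently, $\delta_{g,e}$ would be a pointwise limit of sums $\sum_i \langle \la_{G/G_x}(g) \eta_{n,i}, \eta_{n,i} \rangle$ of matrix coefficients of $\la_{G/G_x}$ with $\sum_i \|\eta_{n,i}\|^2 \to 1$. I would then expand each $\eta_{n,i}$ in the canonical basis $(\delta_{yG_x})_y$, use amenability of $G_x$ to control the cross-terms by Cauchy--Schwarz, and extract a sequence $(y_n)$ in $G$ such that the conjugates $y_n G_x y_n^{-1}$ converge to $\{e\}$ in the Chabauty topology. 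This would put $\{e\} \in \overline{\C(G_x)} = \Phi$, contradicting (i). The main technical obstacle is precisely this extraction step, i.e., converting weak containment $\la_G \prec \la_{G/G_x}$ into Chabauty concentration of conjugates of $G_x$ at the trivial subgroup; the amenability of $G_x$ is essential, as it allows the analysis of general vectors $\eta_{n,i}$ to be reduced in the limit to Dirac vectors of the form $\delta_{y_n G_x}$, whose matrix coefficients are exactly the indicators $\Un_{y_n G_x y_n^{-1}}$.

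For (iv), the key step is to show that $\Phi$ is uncountable. I first rule out finite $\Phi$: by minimality, the $G$-action on such a $\Phi$ would be transitive and factor through $\operatorname{Sym}(\Phi)$, so every $G$-orbit on $\pr(\Phi)$ would be finite. The weak-$*$ continuous equivariant pushforward $\Stab_* : \pr(\partial_F G) \to \pr(\Phi)$, combined with strong proximality of $\partial_F G$, would then force every $\Stab_*(\mu)$ to be a Dirac, so $\Stab$ would be constant and $\Phi = \{H\}$ with $H$ normal. By (i), this $H$ would be a non-trivial amenable normal subgroup of $G$, contradicting triviality of the amenable radical. Hence $\Phi$ is infinite. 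If $\Phi$ had an isolated point $H_0$, minimality would force $\Phi = \C(H_0)$, which would be a closed discrete $G$-orbit in the compact space $\Sub(G)$ and hence finite --- another contradiction. So $\Phi$ is perfect, and Cantor--Bendixson makes it uncountable. Since $G$ is countable, every $G$-orbit in $\Phi$ is countable, so there are uncountably many such orbits; choosing one $x_\alpha \in \partial_F G$ in the preimage of each orbit yields an uncountable $A \subset \partial_F G$ with $G_x \not\sim_{\rm conj} G_y$ for distinct $x, y \in A$.
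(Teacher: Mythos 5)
Items (i), (ii) and (iv) of your proposal are sound. For (i) and (ii) you follow essentially the paper's route (continuity of $\Stab$ on the extremally disconnected space $\partial_F G$, minimality of the image); the only quibble is the order of implications in (i): non-freeness of $G\act \partial_F G$ gives \emph{some} non-trivial stabilizer, and it is then minimality of $\Phi$ (if $\{e\}\in\Phi$ then $\Phi=\{\{e\}\}$ and the action would be free) that yields $\{e\}\notin\Phi$ and hence that \emph{all} stabilizers are non-trivial. For (iv) the paper simply quotes \cite[Proposition 2.18(iii)]{LeM} for the uncountability of $\{G_x \mid x\in\partial_F G\}$; your replacement --- ruling out a finite $\Phi$ via strong proximality and the triviality of the amenable radical, ruling out isolated points via minimality plus compactness, and concluding by Cantor--Bendixson and a counting of countable orbits --- is correct and self-contained, and the final step coincides with the paper's.

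The genuine gap is in (iii). The paper disposes of it in one line by citing \cite[Proposition 3.5]{BKKO}: for a non-topologically free boundary action, $\la_G\not\prec\la_{G/G_x}$. You instead try to show directly that $\la_G\prec\la_{G/G_x}$ would force $\{e\}\in\overline{\C(G_x)}$, and the step you yourself flag as ``the main technical obstacle'' --- extracting $(y_n)$ with $y_nG_xy_n^{-1}\to\{e\}$ in the Chabauty topology from the weak containment --- is precisely the hard content of the result and is never carried out. Concretely, writing $H=G_x$ and $\eta=\sum_{yH}c_{yH}\delta_{yH}$, the matrix coefficient $\langle\la_{G/H}(g)\eta\mid\eta\rangle$ splits as a diagonal part $\sum_{yH}|c_{yH}|^2\,\Un_{yHy^{-1}}(g)$ plus the cross terms $\sum_{gyH\neq yH}c_{yH}\overline{c_{gyH}}$. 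Cauchy--Schwarz only bounds the cross terms by $\Vert\eta\Vert^2$, and amenability of $H$ gives no control on their sign or size; they can a priori cancel the diagonal contribution, so nothing forces the probability measures $yH\mapsto|c_{yH}|^2$ to concentrate where $yHy^{-1}$ is Chabauty-close to $\{e\}$. The implication you are after (for amenable $H$, $\la_G\prec\la_{G/H}$ implies $\{e\}\in\overline{\C(H)}$) is closely tied to the hard direction of the intrinsic characterizations of $C^*$-simplicity in \cite{Ken} and \cite{BKKO}, and the known proofs go through the rigidity of $\partial_F G$ as an injective envelope rather than through an expansion of matrix coefficients in the canonical basis. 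As written, your (iii) rests on an unproved claim; the direct fix is to invoke \cite[Proposition 3.5]{BKKO}, as the paper does.
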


\begin{proof}
Since $G$ is not $C^*$-simple, Item (i)  follows from Theorem 6.2 in \cite{KalKen}  and  from its proof
(see also Theorem 3.1 and Proposition 2.7 in \cite{BKKO}). 

As $\partial_F G$ is extremally disconnected,  the map $\Stab: \partial_F G\to \Sub(G)$ is continuous
(see Lemma~\ref{Lem-ExtremallyDisconnected}).
Since $G\act \partial_F G$ is minimal, the action of $G$ on $\{G_x \mid x\in  \partial_F G\}$
is also minimal.  This implies that $G_x \sim_{\rm w-conj} G_y$, for any pair of points $x, y \in \partial_F G$, and therefore proves Item (ii). 

Let $x\in \partial_F G$. Since $G_x$ is non-trivial, it follows from \cite[Proposition 3.5]{BKKO} that  $\la_G\not\prec \la_{G/G_x}$ and hence $G_x \not\sim_{\rm w-rep} \{e\}$. This proves Item (iii).

By Item (i) and \cite[Proposition 2.18(iii)]{LeM}, the set $\{G_x \mid x\in  \partial_F G\}$ is uncountable,
provided $G$ has a trivial amenable radical.  Since $G$ is countable, this implies Item (iv). 
\end{proof}

\subsection{A criterion for $C^*$-simplicity}
We give a sufficient condition for the  $C^*$-simplicity of a countable group $G$ in terms of a dynamical property of \textbf{non necessarily compact} $G$-spaces.

\begin{proposition}\label{thm:non-top-free-stab}
Let $G\act Y$ be a non-topologically free boundary action of $G$, and let $G\act X$ be a topologically free minimal action of $G$ on a Baire  space $X$. For every $y\in Y$, the subgroup $G_y$ has no global fixed point in $X$.
\end{proposition}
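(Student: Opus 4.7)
The plan is to argue by contradiction, assuming that some $y\in Y$ has $G_y\subseteq G_{x_0}$ for a point $x_0\in X$.  Since $G\act Y$ is not topologically free, I would fix $g_0\in G\setminus\{e\}$ and a non-empty open set $U\subseteq Y$ on which $g_0$ acts as the identity; note that $Y=\bigcup_{g\in G}gU$ by minimality of $G\act Y$.  For every $g\in G$ with $gy\in U$, the conjugate $g^{-1}g_0g$ fixes $y$ and therefore lies in $G_y\subseteq G_{x_0}$, so $gx_0\in {\rm Fix}_X(g_0)$.

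The inclusion $G_y\subseteq G_{x_0}$ makes the assignment $gy\mapsto gx_0$ into a well-defined $G$-equivariant map $\varphi\colon Gy\to X$.  The crucial step is to promote $\varphi$ to a \textbf{continuous} $G$-equivariant extension $\bar\varphi\colon Y\to X$.  My approach would be to work inside $Y\times\widehat{X}$, where $\widehat{X}$ is a suitable $G$-compactification of $X$ (if $X$ is not already compact), and to extract a minimal $G$-invariant closed subset $M_0$ of the orbit closure $\overline{G\cdot(y,x_0)}$; the plan is then to show, using strong proximality of $G\act Y$, that $M_0$ is the graph of a continuous $G$-equivariant map $\bar\varphi\colon Y\to \widehat{X}$.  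The intuition is that any non-trivial fibration of $M_0$ over $Y$ would exhibit a non-trivial $G$-equivariant proximal structure beyond $Y$, which the boundary property of $Y$ forbids; once this is in hand, $\bar\varphi(Y)$ is closed, $G$-invariant and contains the dense orbit $Gx_0$, so by minimality of $G\act X$ its intersection with $X$ equals $X$, and $\bar\varphi$ may be taken to land in $X$.

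With $\bar\varphi$ constructed, continuity together with the density of $Gy\cap U$ in $U$ yields $\bar\varphi(U)\subseteq {\rm Fix}_X(g_0)$, and $G$-equivariance propagates this to
\[
 X=\bar\varphi(Y)=\bigcup_{g\in G}g\,\bar\varphi(U)\subseteq \bigcup_{g\in G}{\rm Fix}_X(gg_0g^{-1}).
\]
Since $G$ is countable and each ${\rm Fix}_X(gg_0g^{-1})$ is closed with empty interior (by topological freeness of $G\act X$), the Baire space $X$ would then be meager in itself, a contradiction.  The main obstacle is the extension step: producing a genuinely continuous $G$-equivariant $\bar\varphi\colon Y\to X$ out of the a priori merely set-theoretic map $\varphi$ is where strong proximality of $G\act Y$ has to be used in an essential way (minimality alone would not suffice); everything else is definitional or a direct invocation of the Baire category theorem.
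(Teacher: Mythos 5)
Your overall strategy (conjugating a germ-trivial element $g_0$ around and landing in fixed-point sets of $X$) is sound, but the central extension step is a genuine gap, not merely a technical obstacle. The principle you invoke --- that a minimal closed $G$-invariant subset $M_0$ of $\overline{G\cdot(y,x_0)}\subseteq Y\times\widehat X$ must be the graph of a continuous $G$-map over the boundary $Y$ --- is false. By Glasner's disjointness theorem, a minimal strongly proximal flow is disjoint from every minimal flow carrying an invariant probability measure; so if $\widehat X$ is minimal with an invariant measure (which the hypotheses allow, e.g.\ a free minimal measure-preserving action on a Cantor set), the unique minimal subset of $Y\times\widehat X$ is the \emph{whole product}, the opposite of a graph. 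More directly: a continuous $G$-equivariant surjection $\bar\varphi\colon Y\to X$ would exhibit $X$ as a continuous equivariant image of a $G$-boundary, hence itself a boundary and in particular strongly proximal with no invariant measure --- generally impossible for a topologically free minimal action. Your justification uses only strong proximality of $Y$ and never the contradiction hypothesis $G_y\subseteq G_{x_0}$, so it cannot be a valid derivation even inside a proof by contradiction.

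The fix is that you do not need the extension at all: your first paragraph plus compactness of $Y$ suffices. Since $Y$ is compact and $Y=\bigcup_{g}gU$, finitely many translates $g_1U,\dots,g_nU$ cover $Y$; for every $g\in G$ there is $i$ with $g_i^{-1}gy\in U$, whence $(g_i^{-1}g)^{-1}g_0(g_i^{-1}g)\in G_y\subseteq G_{x_0}$ and so $gx_0\in {\rm Fix}_X(g_ig_0g_i^{-1})$. Thus the dense orbit $Gx_0$ lies in a \emph{finite} union of closed sets with empty interior, which is again closed with empty interior --- contradicting minimality of $G\act X$, with no Baire category or strong proximality needed. Note that the paper's own proof is entirely different and representation-theoretic: by \cite[Proposition 3.5]{BKKO}, non-topological freeness of the boundary action forces $\la_G\not\prec\la_{G/G_y}$, whereas if $G_y$ fixed $x\in X$ then $\overline{Gx}=X$ contains a point with trivial stabilizer and Proposition~\ref{Cor1-Action} yields $\la_G\prec\la_{G/G_y}$, a contradiction.
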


\begin{proof}
Since $G\act Y$ is a non-topologically free boundary action, the quasi-regular representation $\lambda_{G/G_y}$ does not weakly contain the regular representation $\lambda_G$ for any $y\in Y$,  by \cite[Proposition 3.5]{BKKO}.

Assume, by contradiction, that $G_y$ fixes a point $x\in X.$
Since  $G\act X$ is  minimal and topologically free, $\overline{Gx}=X$ contains
a point with trivial stabilizer.
Proposition \ref{Cor1-Action} implies that 
 $\la_{G} \prec \la_{G/G_y}$ and this is a contradiction.
\end{proof}

\begin{example}
\label{Exa-Thompson}
Let $T$ and $F$ be the Thompson groups.
\begin{itemize}
\item[(i)] The standard action $T\act \mathbf{S}^1$
is a non-topologically free boundary action and  $F$ is a point-stabilizer for this action. 
Let $T\act X$ be  a topologically free minimal action of $T$ on a Baire space $X$.
It follows from  Proposition \ref{thm:non-top-free-stab} that   $F$ has no global fixed point in  $X$.
\item[(ii)]  Let $T\act K$  be a minimal action of $T$ on a compact space $K$ such that   $F$ stabilizes a point in $K$.
The  rigidity result on minimal actions of $T$ on compact spaces from  \cite[Theorem 1.8]{LeM}, combined with (i),
shows that  $T\act K$  factors onto the standard action $T\act \mathbf{S}^1.$
\end{itemize}
\end{example}

As a consequence of Proposition \ref{thm:non-top-free-stab}, we obtain  a sufficient  condition for $C^*$-simplicity of a group.

\begin{corollary}\label{cor:amen-fix->C*-simple}
Let  $G$ be  a group with the following property: for every amenable subgroup $H\in \Sub(G)$, there exists  a topologically free minimal action $G\act X$ on a Baire  space $X$ such that $H$ fixes a point in $X$. Then $G$ is $C^*$-simple.
\end{corollary}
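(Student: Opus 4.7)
The plan is to argue by contrapositive, combining the Kalantar--Kennedy characterization of $C^*$-simplicity (Theorem~\ref{Theo-KK}) with Proposition~\ref{thm:non-top-free-stab} and the information about Furstenberg boundary stabilizers from Proposition~\ref{prop:Furst-bnd-non-C*-simple}.

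Suppose, towards a contradiction, that $G$ is not $C^*$-simple while satisfying the hypothesis. By Theorem~\ref{Theo-KK}, the Furstenberg boundary action $G \act \partial_F G$ cannot be free; since $\partial_F G$ is extremally disconnected, Lemma~\ref{Lem-ExtremallyDisconnected} tells us that freeness and topological freeness coincide here, so $G \act \partial_F G$ is in particular non-topologically free. Recall also that $G \act \partial_F G$ is a boundary action.

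Next, apply Proposition~\ref{prop:Furst-bnd-non-C*-simple}(i): non-$C^*$-simplicity of $G$ forces every point-stabilizer $G_x$ for $x \in \partial_F G$ to be non-trivial and amenable. Pick any such $x$ and set $H := G_x$, which is thus an amenable subgroup of $G$. By the standing hypothesis on $G$, there exists a topologically free minimal action $G \act X$ on a Baire space $X$ such that $H$ fixes some point of $X$.

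Now invoke Proposition~\ref{thm:non-top-free-stab} with $Y = \partial_F G$: since $G \act \partial_F G$ is a non-topologically free boundary action and $G \act X$ is a topologically free minimal action on a Baire space, the stabilizer $G_y$ of any $y \in \partial_F G$ has no global fixed point in $X$. Applied to $y = x$, this contradicts the fact that $H = G_x$ fixes a point of $X$. Hence $G$ must be $C^*$-simple.

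The argument is essentially a repackaging of Proposition~\ref{thm:non-top-free-stab}, so there is no real obstacle: the only point that needs a little care is ensuring one has a non-trivial amenable subgroup to plug in, which is exactly what Proposition~\ref{prop:Furst-bnd-non-C*-simple}(i) guarantees in the non-$C^*$-simple case.
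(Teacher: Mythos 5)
Your proof is correct and follows essentially the same route as the paper's: both hinge on Proposition~\ref{thm:non-top-free-stab} applied to $Y=\partial_F G$, together with Theorem~\ref{Theo-KK} and the amenability of the stabilizers of the Furstenberg boundary action. The only cosmetic difference is that you argue by contradiction, extracting amenability of the stabilizers from Proposition~\ref{prop:Furst-bnd-non-C*-simple}(i), whereas the paper argues directly, using that every stabilizer of $G\act\partial_F G$ is amenable and then concluding topological freeness of $G\act\partial_F G$.
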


\begin{proof}
For each $y\in \partial_FG$, the stabilizer $G_y$ is amenable and hence fixes a point $x\in X,$ by  assumption. 
Proposition~\ref{thm:non-top-free-stab} implies that $G\act\partial_FG$ is topologically free;  therefore, $G$ is $C^*$-simple
(see Theorem~\ref{Theo-KK}).
\end{proof}

\begin{example}
Let $G$ be a torsion-free word hyperbolic group and let $G\act\partial G$ be its the action on its Gromov boundary. Then every amenable subgroup of $G$ is infinite cyclic (see e.g. \cite[Theorem 12.2]{KapBen}) generated by a loxodromic element, hence fixes two points in $\partial G.$
\end{example}


\subsection{Examples of a-normal subgroups}
\label{SS:ExaANormal}
In this subsection, we give  some  examples of classes of a-normal subgroups as defined in Subsection~\ref{SS:A-Normal}.

\begin{example}
\label{Ex:free-pr-a-nor}
Let $G = *_i H_i$ be a free product of countable discrete groups $H_i$. It follows from the definition of the free product that, for 
every  factor group $H_i$ and every $s\notin H_i$, we have $sH_is^{-1}\cap H_i = \{e\}$. Hence, every  factor $H_i$ is an a-normal subgroup of $G$. If, in addition, a factor group $H_i$ is non-amenable, then $H_i\in \Sub_{\rm sg}(G)$.
\end{example}

We can determine when  two factor groups in a  free product are weakly representation equivalent.

\begin{proposition}
\label{Pro-FreeProduct}
Let $G= H_1 * H_2$ be the free product of non-trivial countable groups $H_1$ and $H_2$, at least one of which is of order bigger than two. Then $H_1\sim_{\rm w-rep} H_2$ if and only if both $H_1$ and $H_2$ are amenable.
\end{proposition}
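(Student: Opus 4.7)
My plan is to treat the two implications separately, using Corollary~\ref{Cor-AmenableInduced} and the rigidity result of Theorem~\ref{Theo-RepRigidSubgroups} as the main tools.

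For the forward direction, I would assume $H_1\sim_{\rm w-rep}H_2$ and derive, by contradiction, that neither factor can be non-amenable. Without loss of generality, suppose $H_1$ is non-amenable. If $H_2$ were amenable, then $\lambda_{G/H_2}\prec\lambda_G$ by Corollary~\ref{Cor-AmenableInduced}, hence $\lambda_{G/H_1}\prec\lambda_G$, and that same corollary would force $H_1$ amenable---a contradiction. So $H_2$ must also be non-amenable. Example~\ref{Ex:free-pr-a-nor} then ensures that both $H_1$ and $H_2$ are a-normal in $G$, and Proposition~\ref{Prop-ANormalSpectralGap} places them in $\Sub_{\rm sg}(G)$; in particular they are self-commensurating. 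Theorem~\ref{Theo-RepRigidSubgroups}(iii), applied with $\sigma=1_{H_2}$ and $\pi=1_{H_1}$, would then force $H_1$ to be $G$-conjugate to $H_2$. The contradiction is the standard fact that the two non-trivial factors of a free product are never $G$-conjugate: they stabilize vertices of distinct $G$-orbits in the Bass--Serre tree, so no element of $G$ can carry one to the other.

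For the reverse direction, I would assume both $H_1$ and $H_2$ are amenable. By Corollary~\ref{Cor-AmenableInduced}, $\lambda_{G/H_i}\prec\lambda_G$ for $i=1,2$, so it suffices to establish the reverse containment $\lambda_G\prec\lambda_{G/H_i}$, which by Corollary~\ref{Cor1} reduces to exhibiting $G$-conjugates of $H_i$ converging to $\{e\}$ in the Chabauty topology. Choosing $h_1\in H_1\setminus\{e\}$ and $h_2\in H_2\setminus\{e\}$, I would set $s_n=(h_1h_2)^n$. For each $a\in H_1\setminus\{e\}$, the expression $s_n\,a\,s_n^{-1}=(h_1h_2)^n\,a\,(h_2^{-1}h_1^{-1})^n$ is already in reduced normal form---at both junctions the adjacent letters lie in different free-product factors, so no cancellation occurs---and hence has length $4n+1$. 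It follows that any fixed $g\ne e$ satisfies $g\notin s_nH_1s_n^{-1}$ for all sufficiently large $n$, so $\lim_n s_nH_1s_n^{-1}=\{e\}$ in $\Sub(G)$. The same construction works for $H_2$, and combining the resulting weak containments gives $\lambda_{G/H_1}\sim\lambda_G\sim\lambda_{G/H_2}$, hence $H_1\sim_{\rm w-rep}H_2$.

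The only genuinely combinatorial step is the normal-form verification for $s_n a s_n^{-1}$; everything else is a direct invocation of results proved earlier in the paper. The hypothesis that at least one factor has order greater than two is not actually needed by this argument, but it conveniently excludes the degenerate case $G=\ZZ/2*\ZZ/2\cong D_\infty$, in which both factors are automatically amenable and the statement becomes essentially tautological. I do not foresee a serious obstacle: the main conceptual input is Theorem~\ref{Theo-RepRigidSubgroups}(iii) combined with a-normality of the factors, which immediately rules out the possibility of both being non-amenable under weak representation equivalence.
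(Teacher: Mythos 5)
Your proof is correct, but it diverges from the paper's in both directions, and in instructive ways. For the ``only if'' direction the paper argues in one stroke: $H_1$ non-amenable and a-normal is conjugation rigid (Corollary~\ref{Cor1-Theo-RepRigidSubgroups}) and not conjugate to $H_2$, hence not weakly representation equivalent to it. Your two-case split is more careful: you first rule out the mixed case (one factor amenable, one not) via Corollary~\ref{Cor-AmenableInduced}, and only then invoke Theorem~\ref{Theo-RepRigidSubgroups}(iii), for which you legitimately need $H_2$ to be self-commensurating --- a property that can fail for an amenable (e.g.\ finite) factor, so the case distinction is not cosmetic. For the ``if'' direction the paper simply quotes the external result \cite[Theorem 1.2]{BKKO} (for the $C^*$-simple group $G$, quasi-regular representations of amenable subgroups are weakly equivalent to $\la_G$), whereas you give a self-contained argument internal to the paper: the normal-form computation showing $(h_1h_2)^nH_i(h_1h_2)^{-n}\to\{e\}$ in the Chabauty topology, followed by Corollary~\ref{Cor1} to get $\la_G\prec\la_{G/H_i}$, and Corollary~\ref{Cor-AmenableInduced} for the reverse containment. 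This buys you two things: no reliance on $C^*$-simplicity of $G$ (hence no need for the Paschke--Salinas input), and, as you note, the conclusion extends to $\ZZ/2*\ZZ/2$ --- though I would not call that case tautological, since the weak equivalence $\la_{G/H_1}\sim\la_{G/H_2}$ still requires your Chabauty argument there. The normal-form verification itself is sound: the word $(h_1h_2)^n a (h_2^{-1}h_1^{-1})^n$ alternates factors at every junction, so it is reduced of syllable length $4n+1$ and escapes any fixed $g\neq e$.
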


\begin{proof}
The group $G$ is $C^*$-simple (see  for instance \cite{PaSa--79}). 
Assume that $H_1$ and $H_2$ are amenable.  Then, by \cite[Theorem 1.2]{BKKO}, we have
$$\lambda_{G/H_1}\sim \lambda_{G} \sim\lambda_{G/H_2}.$$

Conversely, assume that $H_1$ is non-amenable. Since $H_1$ is a-normal (see Example~\ref{Ex:free-pr-a-nor}), it follows $H_1$ is conjugation rigid, by Corollary \ref{Cor1-Theo-RepRigidSubgroups}. On the other hand, $H_1\not\sim_{\rm conj}H_2$, by definition of the free product. Hence,  $\lambda_{G/H_1}\not\sim_{\rm w-rep} \lambda_{G/H_2}$.
\end{proof}

The subgroups $H_i$ from Proposition~\ref{Pro-FreeProduct}
are  examples   of maximal parabolic subgroups of relatively hyperbolic groups, in the sense of Bowditch \cite{Bow12}, which in turn are special cases 
of convergence groups (see \cite{Yaman} and \cite{Bow98} for more details).

\begin{definition}
\label{parabolic-subgroup}
\begin{itemize}
\item[(i)] A discrete group $G$ is a \emph{convergence group} if it admits an action $G\act X$ by homeomorphisms on a perfect compact Hausdorff space such that the induced action on the set of distinct triples $\{(x, y, z)\in X\times X\times X \,\mid\, \text{Card}\{x, y, z\} =3 \}$ is properly discontinuous. In this case, we say that $G\act X$ is a \emph{convergence action}.
\item[(ii)] Let $G$ be a convergence group. We say that a subgroup $H\in \Sub(G)$ is \emph{parabolic} if it is infinite and if there is a convergence action $G\act X$ such that $H$ fixes some point of $X$ and such $H$ contains no \emph{loxodromic} element,
that is, no element $g\in G$ of infinite order with $\text{Card}({\rm Fix}(g)) = 2$.
 \end{itemize}
\end{definition} 

If $G$, $H$, and $G\act X$ are as in the above Definition \ref{parabolic-subgroup}.ii, then $H$ has a unique fixed point
in $X$  which is called a \emph{parabolic point}. The stabilizer of a parabolic point is a parabolic subgroup and  hence a maximal parabolic subgroup.

\begin{proposition}
\label{Pro-ANormalParabolic}
Let $G$ be a convergence group. 
Let $H\in\Sub(G)$ be a torsion-free maximal parabolic subgroup. Then  $H$ is malnormal and hence a-normal. If, moreover, 
$H$ is non-amenable, then  $H\in \Sub_{\rm sg}(G).$
\end{proposition}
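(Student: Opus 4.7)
The plan is to prove malnormality of $H$ directly from the convergence action, and then derive the other two claims as formal consequences. Fix a convergence action $G\act X$ witnessing that $H$ is a maximal parabolic subgroup, so that $H$ is the full stabilizer of some parabolic point $p\in X$ and contains no loxodromic element. For any $g\in G\setminus H$, the conjugate $gHg^{-1}$ equals the stabilizer of $gp$: one inclusion is obvious, and the other follows because $g^{-1}\mathrm{Stab}(gp)g\subseteq \mathrm{Stab}(p)=H$ by maximality. Since $g\notin H$, we have $gp\neq p$.

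Now take any $h\in H\cap gHg^{-1}$. Then $h$ fixes both $p$ and $gp$, hence fixes at least two distinct points of $X$. Recall the standard trichotomy in a convergence group: every element is either elliptic (of finite order), parabolic (of infinite order with a unique fixed point), or loxodromic (of infinite order with exactly two fixed points); see \cite{Bow98}, \cite{Tukia}. Since $h$ fixes two distinct points, it is either of finite order or loxodromic. The latter is excluded because $H$ is parabolic and contains no loxodromic element. Hence $h$ has finite order, and torsion-freeness of $H$ forces $h=e$. This shows $H\cap gHg^{-1}=\{e\}$ for every $g\in G\setminus H$, proving that $H$ is malnormal, and in particular a-normal (since $\{e\}$ is amenable).

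If moreover $H$ is non-amenable, then $H$ is a non-amenable a-normal subgroup of $G$, and Proposition~\ref{Prop-ANormalSpectralGap} directly yields $H\in\Sub_{\rm sg}(G)$.

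The only non-routine ingredient is the convergence-group trichotomy; all other steps are straightforward formal manipulations, and the last assertion is an immediate invocation of an earlier proposition. I do not anticipate a substantive obstacle beyond correctly citing the classification of elements of convergence groups.
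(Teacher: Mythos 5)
Your proof is correct and follows essentially the same route as the paper: identify $H$ with the stabilizer of a parabolic point, observe that any element of $H\cap gHg^{-1}$ fixes two distinct points and hence (being non-loxodromic) has finite order, conclude $h=e$ by torsion-freeness, and invoke Proposition~\ref{Prop-ANormalSpectralGap} for the last assertion. Your explicit appeal to the convergence-group trichotomy is just a spelled-out version of the step the paper leaves implicit.
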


\begin{proof}
Since $H\in\Sub(G)$ is a maximal parabolic subgroup, there is a convergence action $G\act X$ as in the Definition \ref{parabolic-subgroup}.ii, and a parabolic point $x\in X$ such that $H=G_x$. Let $g\in G\setminus H$, and let $s\in H\cap gHg^{-1}$. Then $s$ fixes the pair of distinct points $x$ and $gx$. Since $H$ does not contain any loxodromic elements, it follows $s$ has finite order and so $s=e,$ as $H$ is torsion free. Thus, $H\cap gHg^{-1}=\{e\}.$
\end{proof}

Here is another class  of actions for which point-stabilizers  are  a-normal (and not necessarily malnormal).

\begin{proposition}\label{prop:anormal-stab}
Let $G\act X$ be an action of the  group $G$ on a set $X.$
Assume that no non-amenable subgroup of $G$ fixes more than one point in $X$. Then $G_x$ is a-normal for every $x\in X$.
\end{proposition}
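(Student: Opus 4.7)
The proof is essentially immediate from unwinding the definitions, so my plan is short. Fix $x\in X$ and pick any $g\in G\setminus G_x$. To show that $G_x$ is a-normal, I need to verify that the intersection $G_x\cap gG_xg^{-1}$ is amenable.

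The first step is the standard identity $gG_xg^{-1}=G_{gx}$, which holds for any action on a set. Hence
\[
G_x\cap gG_xg^{-1}\;=\;G_x\cap G_{gx},
\]
and this is precisely the subgroup of $G$ fixing both points $x$ and $gx$. The second step is to observe that $gx\neq x$, since $g\notin G_x$, so the two points $x$ and $gx$ are distinct. Therefore the subgroup $G_x\cap G_{gx}$ fixes more than one point of $X$, and the standing hypothesis forces it to be amenable. Since $g\in G\setminus G_x$ was arbitrary, this verifies the defining condition of Definition~\ref{Def-A-normalSubgroups} for $G_x$.

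There is no real obstacle here; the only thing to be careful about is that the hypothesis is stated for \emph{any} subgroup fixing more than one point, so it applies directly to $G_x\cap G_{gx}$ without needing to enlarge or modify the subgroup. The proof requires nothing beyond the two lines above.
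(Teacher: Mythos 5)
Your proof is correct and is essentially identical to the paper's: both note that for $g\notin G_x$ one has $gx\neq x$, so $G_x\cap gG_xg^{-1}=G_x\cap G_{gx}$ fixes the two distinct points $x$ and $gx$ and is therefore amenable by the hypothesis.
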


\begin{proof}
Let $x\in X$ and $g\in G\setminus G_x$. Then $gx\neq x$; the subgroup $G_x\cap gG_x g^{-1}$, which fixes both $x$ and $gx$, is therefore amenable.
\end{proof}

\begin{example}
\label{Exa-A-NormalSubg} 
Let $\KK$ be a field and $G$ a subgroup of $GL_3(\KK).$
The standard action $G\act X$ on the  projective plane $X=\PP^2(\KK)$ satisfies the condition of Proposition \ref{prop:anormal-stab} above; indeed, for distinct points $x$ and $y$ in $X$, the group $G_x\cap G_y$ is conjugate inside $GL_3(\KK)$ to a subgroup of the group
\[ 
L:= \left[\begin{array}{ccc}
* & 0 & *\\
0 & * & *\\
0 & 0 &*
\end{array}\right];\]
since  $L$  is amenable, it follows that $G_x\cap G_y$ is amenable. Hence $G_x$ is a-normal for every point $x\in X$.  
\end{example}

\section{Weakly parabolic subgroups}
\label{S:StabilizerProba}
In this section, we study subgroups of a group $G$  which stabilize a probability measure on a $G$-boundary.

\subsection{Weakly parabolic subgroups: definition and examples}
\label{SS:WPS-DefExa}
Let $X$ be a compact space. Recall  that the space $\pr(X)$ of probability measures on $X,$
endowed with the weak* topology, is a compact space.
Assume that we have an action   $G\act X$ of a group $G$   on $X$. Then 
$G$ acts on $\pr(X)$ through $\nu\mapsto g_*\nu,$
where $g_*\nu$ is the image of $\nu\in \pr(X)$ under the map $x\mapsto gx$ for $g\in G.$

We introduce our second main  class of subgroups of $G.$
\begin{definition}
\label{Def-Sub_g}
Let $G$ be a countable group.  We say that a subgroup $H$ of $G$ is \emph{weakly parabolic}, if
there exists a topologically free boundary action $G\act X$ such that $H$ has a fixed point in $\pr(X).$
We denote by $\Sub_{\rm w-par}(G)$ the set of weakly parabolic $H\in \Sub(G)$.
\end{definition}

\begin{remark}
\label{Rem-Sub_g} 
Let $G$ be a group which admits  a topologically free boundary action $G\act X$.
\begin{itemize}
\item[(i)] The group $G$  is $C^*$-simple, by Theorem~\ref{Theo-KK}.
\item[(ii)] The set 
$$ \left\{H\in \Sub(G)\,\mid\, \text{$H$ has a fixed point in $\pr(X)$}\right\}$$ 
 is a closed and $G$-invariant subset of $\Sub(G)$.
 It follows that $\Sub_{\rm w-par}(G)$ is a $G$-invariant subset of $\Sub(G)$.
\end{itemize}
\end{remark}

\begin{example}
\label{Exa-Sub_g} 
(i) Let $G$ be a group which admits  a topologically free boundary action.
 Then every amenable subgroup of $G$ belongs to $\Sub_{\rm w-par}(G)$.
Indeed, if $H\act X$ is an action of an amenable group
on a compact space $X,$ then $H$ has a fixed point in $\pr(X)$.
 
 \noindent 
 (ii)  Let $G\act X$ be a topologically free boundary action of a group $G.$
  Let $x\in X$. Every subgroup of the point-stabilizer $G_x$ of $x$ is weakly parabolic.
Here is a specific example.

\begin{itemize}
\item Let $G=PGL_n(\QQ)$ for $n\geq 2$ and let $G\act X$ be the standard action
 on the projective space $X=\PP(\kk^n)$, where $\kk$ is the field $\RRR$ of real numbers
or the field $\QQ_p$ of $p$-adic numbers.  
Let $x$ be the image in $\PP(\kk^n)$ of some vector from $\QQ^n\setminus\{0\}.$
Then the point stabilizer $G_x$ is isomorphic to 
the semi-direct product $PGL_{n-1}(\QQ)\ltimes \QQ^{n-1},$ given by the standard linear  action 
on $\QQ^{n-1}.$ 
\end{itemize}

\noindent 
(iii) Let  $\mathbf{G}\act X$ be a continuous  action of  a locally compact group $\mathbf{G}$ on a compact space $X.$
Let $G$ be a countable (not necessarily discrete) subgroup of $\mathbf{G}$ such that
the restricted action $G\act X$ is  a topologically free boundary action of   $G.$ 
Let $\mathbf{H}$ be an amenable closed subgroup of $\mathbf{G}$.
Then every subgroup $H$ of $G\cap \mathbf{H}$ belongs to $\Sub_{\rm w-par}(G)$.
Indeed, the locally compact amenable group $\mathbf{H}$ stabilizes a probability measure on $X$ and the claim follows.
Here are two specific examples.
\begin{itemize}
\item For $n\geq 2$, we claim that $H=PO_n(\QQ)$, the (projective) group of orthogonal matrices with rational entries, 
is a weakly parabolic subgroup of $G=PGL_n(\QQ).$
Indeed, let $\mathbf{G}=PGL_n(\RRR)$ 
 and  $\mathbf{G}\act X$ the standard action on $X=\PP(\RRR^n)$. 
The action $G\act X$  is a topologically free boundary action and
$H$ is contained in the compact subgroup $PO_n( \RRR)$ of   $\mathbf{G}.$ Observe that 
$H$ is not amenable when $n\geq 3,$ as it is dense in $PO_n( \RRR)$ and so contains non abelian free subgroups.
\item For $n\geq 2,$ we claim that  $H=PSL_n(\ZZ)$ is a weakly parabolic subgroup of $G=PSL_n(\QQ).$
Indeed, let $\mathbf{G}=PSL_n(\QQ_p)$ for  a prime integer $p$ 
and $\mathbf{G}\act X$ the standard action on $X=\PP(\QQ_p^n)$. 
 The action $G\act X$  is a topologically free boundary action and
$H$ is contained in the compact subgroup $PSL_n( \ZZ_p)$ of   $\mathbf{G}.$
\end{itemize}

\end{example}

\subsection{Some properties of weakly parabolic subgroups}
\label{SS:WPS-Properties}
The following property of weakly parabolic subgroups will be a crucial tool in the study of their associated  $C^*$-algebras
(see the proof of Theorem~\ref{Theo-C*IndRep}).
Recall that a map $\Phi: \A\to \B$ between two unital $C^*$-algebras is unital if $\Phi(\mathds{1}_\A)=\mathds{1}_\B$ and that 
$\Phi$ is positive if $\Phi(T)\geq 0$ for every $T\in \A$ with $T\geq 0.$
\begin{proposition}
\label{Prop-WPS-Boundary}
Let $G$ be a countable group and $H\in \Sub_{\rm w-par}(G).$
Then there exist a topologically free boundary action $G\act X$ 
 and a linear  isometric map $C(X) \to \ell^\infty(G/H)$
which is  $G$-equivariant, unital and positive.
\end{proposition}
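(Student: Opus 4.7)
The plan is to build the map $\Phi$ from the $H$-invariant probability measure handed to us by the definition of weakly parabolic, then upgrade the obvious contractivity bound to isometry using the minimal-strongly-proximal dynamics on $X$.

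First, I would unpack the hypothesis: since $H\in \Sub_{\rm w-par}(G)$, fix a topologically free boundary action $G\act X$ and a probability measure $\mu \in \pr(X)$ with $h_*\mu = \mu$ for every $h\in H$. Because $(gh)_*\mu = g_*(h_*\mu) = g_*\mu$, the pushforward $g_*\mu$ depends only on the coset $gH$, so I can define
\[
\Phi : C(X) \to \ell^\infty(G/H), \qquad \Phi(f)(gH) = \int_X f \, d(g_*\mu).
\]
Linearity is immediate; unitality and positivity follow because each $g_*\mu$ is a probability measure; and the bound $|\Phi(f)(gH)| \le \|f\|_\infty$ gives $\|\Phi(f)\|_\infty \le \|f\|_\infty$. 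Equivariance, for the natural $G$-actions on $C(X)$ (by $(g\cdot f)(x)=f(g^{-1}x)$) and on $\ell^\infty(G/H)$ (by left translation), is a change-of-variables computation: both $\Phi(g\cdot f)(g'H)$ and $\Phi(f)(g^{-1}g'H)$ equal $\int_X f(g^{-1}g' y)\, d\mu(y)$.

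The main work—and the only nontrivial step—is the reverse inequality $\|\Phi(f)\|_\infty \ge \|f\|_\infty$. The key intermediate claim is: for every $x_0\in X$, the Dirac measure $\delta_{x_0}$ lies in the weak-$*$ closure of the orbit $G\cdot \mu = \{g_*\mu : g\in G\}\subset \pr(X)$. Granting this, given $f\in C(X)$ and $x_0$ with $|f(x_0)| = \|f\|_\infty$, I can select $g_n\in G$ with $g_{n*}\mu\to \delta_{x_0}$, whence
\[
\|\Phi(f)\|_\infty \ge |\Phi(f)(g_n H)| = \left| \int_X f \, d(g_{n*}\mu)\right| \longrightarrow |f(x_0)| = \|f\|_\infty.
\]

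The intermediate claim is where strong proximality and minimality combine. Strong proximality gives at least one Dirac $\delta_y$ in the weak-$*$ closure $\overline{G\cdot \mu}^{w*}$; this closure is $G$-invariant, hence contains $\{\delta_{gy}: g\in G\}$; minimality of $G\act X$ makes $Gy$ dense in $X$, and since the embedding $X\hookrightarrow \pr(X)$, $x\mapsto \delta_x$, is a homeomorphism onto its image, we conclude $\delta_{x_0}\in \overline{G\cdot \mu}^{w*}$ for every $x_0\in X$. I expect this transfer from ``some Dirac'' to ``every Dirac'' in the orbit closure to be the only real obstacle; everything else reduces to routine bookkeeping about pushforwards and probability integrals. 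Topological freeness of $G\act X$ is not needed for this proposition itself, but it is of course the input for later applications of $\Phi$.
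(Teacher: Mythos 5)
Your proof is correct and follows essentially the same route as the paper: both construct the Poisson transform $f\mapsto \bigl(gH\mapsto \int_X f\,d(g_*\mu)\bigr)$ of the $H$-invariant measure supplied by the definition of $\Sub_{\rm w-par}(G)$, check linearity, unitality, positivity and equivariance directly, and obtain the isometry from the fact that every Dirac measure lies in the weak* closure of the orbit $G\mu$ in $\pr(X)$. The only cosmetic difference is that the paper outsources this last dynamical fact to Furstenberg's Proposition~4.2, whereas you derive it from strong proximality plus minimality, which is if anything the more self-contained phrasing.
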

\begin{proof}
Since $H\in \Sub_{\rm w-par}(G),$ there exists a topologically free boundary action $G\act X$ such that $X$ admits an $H$-invariant probability measure $\nu.$ Consider the Poisson transform $\P_\nu: C(X) \to \ell^\infty(G/H)$,  defined by 
$$
\P_\nu(f)(gH) = \int_Xf(gx) d\nu(x) \qquad\text{for} \qquad f\in C(X),\, g\in G.
$$
It is clear that $\P_\nu$ is  linear, $G$-equivariant, unital and positive.
So, $\P_\nu$ induces a $G$-equivariant map $\Phi:G/H\to \pr(X),$ given by 
$$\Phi(gH)(f)= \P_\nu(f)(gH) \qquad\text{for} \qquad f\in C(X), \ g\in G.$$
Since $G\act X$ is a boundary action, the range of $\Phi$ consists exactly of the point measures
$\delta_x$ for $x\in X$ (see \cite[Proposition 4.2]{Furstenberg}). 
So, there exists a surjective map $\varphi: G/H\to X$ such that $\Phi(gH)= \delta_{\varphi(gH)}$
for every $g\in G.$
It follows that $\P_\nu$ is isometric; indeed,  let $f\in C(X).$
Then 
$$
\begin{aligned}
\sup_{g\in G} \vert  \P_\nu(f)(gH)\vert&= \sup_{g\in G} \vert  \Phi(gH)(f)\vert\\
&= \sup_{g\in G} \vert  f(\varphi(gH))\vert\\
&= \sup_{x\in X} \vert  f(x)\vert.
\end{aligned}
$$
\end{proof}

The next result generalizes the 
well-known fact that a $C^*$-simple group contains  no  non-trivial amenable normal subgroup. Recall that, if $\Sub_{\rm w-par}(G)\neq\emptyset$, then $G$ is $C^*$-simple (Remark~\ref{Rem-Sub_g}).
\begin{proposition}
\label{Prop-Sub_g}
Let $G$  be a countable group. Then $\Sub_{\rm w-par}(G)$ contains no  non-trivial normal subgroup of $G.$
\end{proposition}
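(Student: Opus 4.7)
The plan is to combine strong proximality with normality to force any normal weakly parabolic subgroup to act trivially on some topologically free boundary, and then conclude from topological freeness.

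First I would assume $\Sub_{\rm w-par}(G) \neq \emptyset$ (otherwise there is nothing to prove), take a normal subgroup $N \trianglelefteq G$ lying in $\Sub_{\rm w-par}(G)$, and unpack the definition: pick a topologically free boundary action $G\act X$ together with an $N$-invariant probability measure $\nu \in \pr(X)$. The key observation is that the set $\pr(X)^N$ of $N$-invariant probability measures is $G$-invariant; indeed, for any $g\in G$ and $n\in N$, normality gives $g^{-1}ng \in N$, so $n_\ast(g_\ast \nu) = g_\ast((g^{-1}ng)_\ast \nu) = g_\ast\nu$. Since $\pr(X)^N$ is also weak$^\ast$-closed, the orbit closure $\overline{G\nu}$ is contained in $\pr(X)^N$.

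Next, since $G\act X$ is a boundary, hence strongly proximal, the closure $\overline{G\nu}$ contains some Dirac mass $\delta_x$, and by the previous step $\delta_x$ is $N$-invariant. Thus $N$ fixes the point $x$. Using normality once more, for every $g\in G$ and $n\in N$ we have $n(gx)=g(g^{-1}ng)x=gx$, so $N$ fixes pointwise the whole orbit $Gx$. Minimality of the boundary action gives $\overline{Gx}=X$, and since $\mathrm{Fix}(n)$ is closed for each $n\in N$, we obtain $\mathrm{Fix}(n)=X$ for every $n\in N$.

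Finally, topological freeness means that for every $n \neq e$ the set $\mathrm{Fix}(n)$ has empty interior in $X$. The existence of a topologically free boundary action forces $X$ to have more than one point (otherwise every element of $G$ would fix the unique point, contradicting topological freeness unless $G=\{e\}$, in which case the statement is vacuous), so $X$ has non-empty interior. Hence $\mathrm{Fix}(n)=X$ forces $n=e$, giving $N=\{e\}$. The only non-obvious step is verifying that $\pr(X)^N$ is $G$-invariant, which is where normality enters in an essential way; the rest is a routine strong-proximality/minimality argument.
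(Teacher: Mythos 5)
Your proof is correct and follows essentially the same route as the paper's: normality makes every translate $g_*\nu$ (equivalently, the set of $N$-invariant measures) $N$-invariant, the boundary property pushes the orbit closure onto point masses, and topological freeness then forces $N=\{e\}$. The only cosmetic difference is that the paper invokes the fact that \emph{every} $\delta_x$, $x\in X$, lies in the weak* closure of $G\nu$ to conclude directly that $N$ acts trivially, whereas you obtain one fixed point and then spread it over the orbit using normality and minimality; both steps are equivalent.
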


\begin{proof}
Let $N$ be a normal subgroup of $G$, and suppose there is a topologically free boundary action $G\act X$ and an $N$-invariant $\nu\in\pr(X)$. Since $N$ is normal, $g_*\nu$ is  $N$-invariant for every $g\in G$. 
As $G\act X$ is a boundary action, the point measure $\delta_x$ belongs to the the weak*-closure of $\{g_*\nu \mid g\in G\}$
for every $x\in X.$ Hence, $N$ acts trivially on $X$ and this implies that $N=\{e\}$, since  $G\act X$  is topologically free.
\end{proof}

Next, we establish an interesting property of the conjugacy class of weakly parabolic subgroups. 

\begin{theorem}
\label{thm:qus-reg-parab--w-contain-reg}
Let $G$ be a countable group. For $H\in\Sub_{\rm w-par}(G)$, the following hold:
\begin{itemize}
\item[(i)] $\{e\}\in \overline{\C(H)}$;
\item[(ii)] $\la_{G} \prec \la_{G/H}$.
\end{itemize}. 
\end{theorem}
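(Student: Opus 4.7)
The starting point is to unpack the definition of $\Sub_{\rm w-par}(G)$: fix a topologically free boundary action $G\act X$ and an $H$-invariant probability measure $\nu\in\pr(X)$. Part (ii) will then follow immediately from part (i) together with Corollary~\ref{Cor1}(i), since $\{e\}\in\overline{\C(H)}$ gives $\lambda_G=\lambda_{G/\{e\}}\prec \lambda_{G/H}$. So the real work is in (i), namely producing a net $(g_\alpha)$ in $G$ such that $g_\alpha H g_\alpha^{-1}\to\{e\}$ in the Chabauty topology.

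To pick the target of the net, I use that $X$ is compact Hausdorff, hence Baire, $G$ is countable, and the action is topologically free: the set $\bigcup_{g\neq e}\mathrm{Fix}(g)$ is meager, so the set of points with trivial $G$-stabilizer is a dense $G_\delta$, and I may fix $x_0\in X$ with $G_{x_0}=\{e\}$. To move $\nu$ to $\delta_{x_0}$, I invoke that $G\act X$ is a boundary action: strong proximality yields some point measure $\delta_y\in\overline{G\nu}$, and combining this with minimality (density of $Gy$ in $X$) and the weak*-continuity of $y\mapsto\delta_y$ shows that $\overline{G\nu}$ actually contains every point measure, in particular $\delta_{x_0}$. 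Thus there is a net $(g_\alpha)$ in $G$ with $(g_\alpha)_*\nu\to\delta_{x_0}$ in the weak* topology on $\pr(X)$.

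Finally, I transport this convergence to $\Sub(G)$. By compactness of $\Sub(G)$ in the Chabauty topology, I pass to a subnet along which $g_\alpha H g_\alpha^{-1}$ converges to some $L\in\Sub(G)$. Since each $g_\alpha H g_\alpha^{-1}$ preserves $(g_\alpha)_*\nu$, and since for any $h\in L$ the definition of Chabauty convergence forces $h\in g_\alpha H g_\alpha^{-1}$ eventually, passing to the weak* limit and using continuity of $h_*:\pr(X)\to\pr(X)$ gives $h_*\delta_{x_0}=\delta_{x_0}$, i.e.\ $hx_0=x_0$, whence $h\in G_{x_0}=\{e\}$. Therefore $L=\{e\}$, proving $\{e\}\in\overline{\C(H)}$ and (i). The main subtlety I anticipate is the need to work with nets rather than sequences, since the boundary $X$ is not assumed metrizable, and to make sure the two notions of convergence---Chabauty in $\Sub(G)$ and weak* in $\pr(X)$---are transported simultaneously along a single subnet; the rest is bookkeeping with the definitions.
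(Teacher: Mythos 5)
Your proof is correct and follows essentially the same route as the paper: both arguments pick a point $x_0$ with trivial stabilizer (via topological freeness and the Baire property), use strong proximality and minimality to push the $H$-invariant measure $\nu$ to $\delta_{x_0}$ inside $\overline{G\nu}$, and then transfer this to Chabauty convergence of the conjugates of $H$ to $\{e\}$. The only cosmetic difference is that the paper delegates the last step to Proposition~\ref{Cor1-Action}.ii applied to the action $G\act \pr(X)$, whereas you verify the Chabauty limit directly by hand; the content is the same.
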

\begin{proof}
Let $G\act X$ be a topologically free boundary action such that $H$ stabilizes a  probability measure $\nu$ on $X$. 
 Let $x\in X$ be a point with trivial stabilizer.
Then $\delta_x\in \pr(X)$ has a trivial stabilizer.  Since $G\act X$ is a boundary action, $\delta_x$ belongs to the weak*-closure of  $G\nu$. 
 Proposition~\ref{Cor1-Action}.ii applied  to the action $G\act \pr(X)$
 shows that Items (i) and (ii) hold, since  $H\subset G_\nu$. 
 \end{proof}

\begin{remark}
We will give below (Theorem~\ref{Theo-C*IndRep})  a much stronger version
of Theorem~\ref{thm:qus-reg-parab--w-contain-reg}.ii.
\end{remark}

A subgroup $H\in \Sub(G)$  is \emph{recurrent} if  $\{e\}$
does not belong to the closure of $\C(H)$ for the Chabauty topology;
this notion was introduced in \cite{Ken} and used there (see Theorem 1.1) to give the following  characterization of $C^*$-simplicity: a discrete group $G$ is $C^*$-simple if and only if $G$ has no amenable recurrent subgroup.
Since amenable subgroups are weakly parabolic (provided $\Sub_{\rm w-par}(G)\neq \emptyset$), the following corollary of Theorem~\ref{thm:qus-reg-parab--w-contain-reg} strengthens one  implication in  this result.
\begin{corollary}
\label{Cor-RecurrentSub}
Let $G$ be a countable group. Then $\Sub_{\rm w-par}(G)$ contains no  recurrent subgroup of $G$.
\end{corollary}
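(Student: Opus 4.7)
The plan is to argue directly from Theorem~\ref{thm:qus-reg-parab--w-contain-reg}.i, which is the only ingredient needed. Suppose $H\in \Sub_{\rm w-par}(G)$. Then by definition there exists a topologically free boundary action $G\act X$ for which $H$ fixes some probability measure $\nu\in \pr(X)$. Theorem~\ref{thm:qus-reg-parab--w-contain-reg}.i asserts precisely that $\{e\}\in \overline{\C(H)}$ in the Chabauty topology. By the definition of recurrence recalled just before the statement, this means $H$ is \emph{not} recurrent. Hence no element of $\Sub_{\rm w-par}(G)$ can be recurrent, which is exactly the claim.

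In short, this is a one-line deduction: weakly parabolic $\Rightarrow$ trivial subgroup is a Chabauty limit of conjugates $\Rightarrow$ not recurrent. There is no genuine obstacle beyond invoking Theorem~\ref{thm:qus-reg-parab--w-contain-reg}; all the work has been done in establishing that theorem (through the Poisson-transform argument and the analysis of stabilizers of boundary points with trivial $G$-stabilizer via Proposition~\ref{Cor1-Action}). The only care needed in the write-up is to explicitly contrapose the definition of recurrence so the logical step is transparent to the reader.
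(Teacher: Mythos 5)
Your proposal is correct and matches the paper's (implicit) argument exactly: the corollary is an immediate consequence of Theorem~\ref{thm:qus-reg-parab--w-contain-reg}.i combined with the definition of recurrence, which is precisely the one-line deduction you give.
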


\section{$C^*$-algebras associated to induced representations}
\label{S: C*-algebras}
We are going to draw some consequences of the results from Section~\ref{S: RigidSubgroups} 
for the primitive ideal space of a countable group $G$; we also study the ideal theory of $C^*$-algebras associated to 
 induced representations of $G$ from  
 subgroups which either have  the spectral gap property or  are weakly parabolic.

\subsection{Primitive ideal space of $G$}
\label{SS:PrimG}
Recall that the \textbf{primitive ideal space} $\Prim (G)$ of $G$ is the set 
of equivalence classes of irreducible unitary representations of $G$ for the relation of weak equivalence.
  
Let $C^*(G)$ be the maximal  $C^*$-algebra of $G$ (see Subsection~\ref{SS-C*-W*-alg}). 
One may describe $\Prim(G)$ as the set $\{ C^*\ker(\pi) \mid \pi \in \widehat{G}\},$
where $C^*\ker(\pi)$ is the kernel of the extension of the representation $\pi$
to $C^*(G)$; for all this, see \cite[Chap.13]{Dix--C*}.

Let 
$$
X:=\left\{(H, \sigma)\mid H\in \Sub_{\rm sc}(G), \sigma \in \widehat{H}_{fd}\right\},
$$
where $\Sub_{\rm sc}(G)$ is the set of self-commensurating subgroups of $G.$
Recall from Subsection~\ref{S: IrredQuasiRegRep} that 
$G$ acts naturally on $X$ and that the map 
$$\Ind: X\to \Rep(G), \qquad (H,\sigma) \mapsto  \Ind_H^G \sigma$$
factorizes to an injective map 
$ X/G\to \widehat{G},$
 where $X/G$ is the space of orbits for 
that the natural $G$-action on $X$. This yields the map
$$X\to \Prim(G), \qquad (H,\sigma) \mapsto  C^*\ker(\Ind_H^G \sigma)$$
which factorizes through $X/G.$

Items (ii) and (iii) of Theorem~\ref{Theo-RepRigidSubgroups} may be rephrased in terms of 
the primitive ideal space of $G$ in the following way.

\begin{theorem}
\label{Theo-RepRigidSubgroups-PrimIdeal}
Let $G$ be a countable group. The  restriction of the map 
$$ X\to \Prim(G), \qquad (H,\sigma) \mapsto  C^*\ker(\Ind_H^G \sigma)$$ to the $G$-invariant subset
$$
X_{\rm sg}:=\left\{(H, \sigma)\mid H\in \Sub_{\rm sg}(G), \sigma \in \widehat{H}_{\rm fd}\right\}
$$
factorizes to an \emph{injective} map $X_{\rm sg}/G \to \Prim(G).$ 

Moreover, if $C^*\ker(\pi)=C^*\ker(\Ind_H^G \sigma)$ for some $\pi\in \widehat{G}$ and  $(H,\sigma)\in X_{\rm sg},$ then $\pi$ is  equivalent to $\Ind_H^G \sigma.$
\end{theorem}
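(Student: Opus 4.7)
The plan is to translate Theorem~\ref{Theo-RepRigidSubgroups} from the language of weak equivalence of representations into the language of primitive ideals of $C^*(G)$. The bridge is the fundamental correspondence (recalled in the preliminaries on weak containment) that two representations $\pi,\rho$ of $G$ satisfy $\pi\sim \rho$ if and only if $C^*\ker(\pi)=C^*\ker(\rho)$.

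First I would verify that the map $X\to \Prim(G)$ is well-defined and factors through $X/G$ on all of $X$. Well-definedness: for $(H,\sigma)\in X$, the representation $\Ind_H^G\sigma$ is irreducible by Mackey's theorem (Theorem~\ref{Theo-Mackey}), so its $C^*$-kernel is indeed a primitive ideal of $C^*(G)$. $G$-invariance: for each $g\in G$, the unitary $F\mapsto [x\mapsto F(g^{-1}x)]$ intertwines $\Ind_{gHg^{-1}}^G\sigma^{g^{-1}}$ and $\Ind_H^G\sigma$ (the argument being essentially that of Proposition~\ref{Pro-Def-RepEquivalentSub}.(ii)), so these two induced representations are unitarily equivalent and share the same $C^*$-kernel.

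Next, for the injectivity on $X_{\rm sg}/G$: suppose $(H_1,\sigma_1),(H_2,\sigma_2)\in X_{\rm sg}$ satisfy the equality $C^*\ker(\Ind_{H_1}^G\sigma_1)=C^*\ker(\Ind_{H_2}^G\sigma_2)$; by the correspondence recalled above, the induced representations $\Ind_{H_1}^G\sigma_1$ and $\Ind_{H_2}^G\sigma_2$ are weakly equivalent. Since $\Sub_{\rm sg}(G)\subset \Sub_{\rm sc}(G)$ by Remark~\ref{Rem-Def-SpectralSubgroups}.(iii), Theorem~\ref{Theo-RepRigidSubgroups}.(iii) can be applied with $H=H_1\in \Sub_{\rm sg}(G)$ and $L=H_2\in \Sub_{\rm sc}(G)$; this produces some $g\in G$ with $H_1=g^{-1}H_2g$ and $\sigma_1$ equivalent to $\sigma_2^{g}$. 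Matching this with the $G$-action $(g,(H,\sigma))\mapsto (gHg^{-1},\sigma^{g^{-1}})$ on $X$ shows that $(H_1,\sigma_1)=g^{-1}\cdot (H_2,\sigma_2)$, so the two pairs lie in the same $G$-orbit.

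For the final ``moreover'' clause, given $\pi\in \widehat{G}$ and $(H,\sigma)\in X_{\rm sg}$ with $C^*\ker(\pi)=C^*\ker(\Ind_H^G\sigma)$, the same correspondence yields $\pi\sim \Ind_H^G\sigma$, and Theorem~\ref{Theo-RepRigidSubgroups}.(ii) upgrades this weak equivalence to unitary equivalence. I do not expect a genuine obstacle in this argument; all the substantive work has already been carried out in Theorem~\ref{Theo-RepRigidSubgroups}. The only piece of bookkeeping requiring care is matching the conjugation convention in the $G$-action on $X$ with the form of the conjugating element produced by the rigidity statement, so that the two pairs are identified as belonging to the same orbit rather than to orbits related by an inverse.
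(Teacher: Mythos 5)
Your proposal is correct and follows exactly the route the paper intends: the paper presents Theorem~\ref{Theo-RepRigidSubgroups-PrimIdeal} as a direct rephrasing of Items (ii) and (iii) of Theorem~\ref{Theo-RepRigidSubgroups} via the correspondence between weak equivalence and equality of $C^*$-kernels, which is precisely what you do. Your bookkeeping with the conjugation convention ($H_1=g^{-1}H_2g$ and $\sigma_1\sim\sigma_2^{g}$ matching $(H_1,\sigma_1)=g^{-1}\cdot(H_2,\sigma_2)$) is also correct.
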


\subsection{$C^*$-algebras associated to induced representations}
\label{SS:C*-algSpectGapSubgroup}
We prove a result about the ideal structure of  the $C^*$-algebra generated by 
an  induced representation from a weakly parabolic subgroup.

In the sequel, by an ideal in a $C^*$-algebra $\A,$ we mean a closed two-sided ideal of $\A.$

\begin{theorem}
\label{Theo-C*IndRep}
Let $G$ be a countable group, $H\in \Sub_{\rm w-par}(G)$ and $\sigma\in \Rep(H).$ 
Let $\pi:= \Ind_{H}^G\sigma$.
The $C^*$-algebra $C^*_{\pi}(G)$ contains a (unique)  largest  proper ideal $I_{\rm max}$
that is, a proper  ideal of $C^*_{\pi}(G)$ which contains every proper 
 ideal of  $C^*_{\pi}(G)$. Moreover, the map 
$$\pi(G)\to \la_G(G),  \qquad \pi(g)\mapsto \la_G(g)$$ extends
to a surjective $*$-homomorphism $C^*_{\pi}(G)\to \CRed(G)$ with kernel $I_{\rm max}.$
\end{theorem}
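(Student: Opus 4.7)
The statement asserts two things: that the map $\pi(g)\mapsto\la_G(g)$ extends to a surjective $*$-homomorphism $\Psi: C^*_\pi(G)\to\CRed(G)$, and that its kernel $I_{\max}$ is the largest proper ideal of $C^*_\pi(G)$. I plan to handle these in sequence.

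For the first claim, my plan is to verify $\la_G\prec\pi$ by producing matrix coefficients of $\pi$ that pointwise approach the canonical coefficient $\delta_{g,e}$ of $\la_G$. Realize $\pi=\Ind_H^G\sigma$ on the Hilbert space $\H_\pi$ of $\sigma$-equivariant square-summable $\H_\sigma$-valued functions on $G$. For each $g_0\in G$ and unit vector $\xi\in\H_\sigma$, define the unit vector $F_{\xi,g_0}\in\H_\pi$ to be supported on the single coset $g_0 H$ and to take the value $\xi$ at $g_0$ (extended by $\sigma$-equivariance on $g_0 H$). A direct computation gives
\[
\phi_{\xi,g_0}(g) \, := \, \langle \pi(g) F_{\xi,g_0}, F_{\xi,g_0}\rangle
\, = \, \Un_{g_0 H g_0^{-1}}(g)\,\langle \sigma(g_0^{-1} g g_0) \xi, \xi\rangle.
\]
Since $H\in\Sub_{\rm w-par}(G)$, Theorem~\ref{thm:qus-reg-parab--w-contain-reg} provides a sequence $g_n\in G$ with $g_n H g_n^{-1}\to\{e\}$ in the Chabauty topology, whence $\phi_{\xi,g_n}(g)\to\delta_{g,e}$ pointwise. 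As $\delta_{g,e}$ is the canonical cyclic matrix coefficient of $\la_G$ (attached to $\delta_e\in\ell^2(G)$), Fell's characterization of weak containment yields $\la_G\prec\pi$, and the universal property gives $\Psi$.

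For the second claim, first observe that $\Sub_{\rm w-par}(G)\neq\emptyset$ forces $G$ to be $C^*$-simple by Theorem~\ref{Theo-KK}, so $\CRed(G)$ is simple and $I_{\max}$ is thereby a maximal ideal. The content is that every proper ideal $J\subset C^*_\pi(G)$ satisfies $J\subset I_{\max}$, equivalently, that for every nonzero quotient representation $\pi_J$ of $\pi$ through $C^*_\pi(G)/J$ one has $\la_G\prec\pi_J$. My plan is to exploit the Poisson transform from Proposition~\ref{Prop-WPS-Boundary}: a $G$-equivariant unital positive isometric map $\P_\nu: C(X)\to\ell^\infty(G/H)$ for a topologically free boundary action $G\act X$. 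The proof of that proposition in fact shows that $\nu$ must be a Dirac mass $\delta_x$ at an $H$-fixed point $x\in X$, so $\P_\nu$ is the $*$-homomorphism $f\mapsto f\circ\varphi$ with $\varphi(gH)=gx$. Composing with the multiplication representation $m:\ell^\infty(G/H)\to B(\H_\pi)$, I obtain a $G$-equivariant $*$-homomorphism $C(X)\to B(\H_\pi)$ compatible with $\pi$; passing to the quotient and composing with a faithful representation of $C^*_\pi(G)/J$ yields a $G$-equivariant covariant pair $(C(X),G)\to C^*_\pi(G)/J$. Using the simplicity of $C(X)\rtimes_r G$ (Kalantar--Kennedy, under topological freeness) together with Choi's multiplicative-domain theorem applied to the unitaries $\pi_J(G)$, I extract an isometric $*$-homomorphism $\CRed(G)\hookrightarrow C^*_\pi(G)/J$ sending $\la_G(g)\mapsto \pi(g)+J$. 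This gives $\la_G\prec\pi_J$ and hence $J\subset I_{\max}$.

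The main obstacle lies in the second claim. The challenge is to descend the $*$-homomorphism obtained from the covariant representation into $C(X)\rtimes G$ all the way to the reduced crossed product $C(X)\rtimes_r G$ (where the Kalantar--Kennedy simplicity is available, but where this step is nontrivial unless the boundary action is amenable), and then to combine this with Choi's multiplicative-domain argument \emph{after} quotienting by $J$, so as to produce an injective $*$-homomorphism defined on $\CRed(G)$ rather than only on $C^*(G)$. The topological freeness of $G\act X$ together with the rigidity of $G$-equivariant UCP maps out of boundary $C^*$-algebras (Hamana-style arguments) should provide the needed input, but the bookkeeping between full and reduced crossed products and between the various UCP extensions must be handled carefully.
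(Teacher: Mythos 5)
Your first step---showing $\la_G\prec\pi$ by exhibiting the matrix coefficient $\phi_{\xi,g_0}(g)=\Un_{g_0Hg_0^{-1}}(g)\,\langle\sigma(g_0^{-1}gg_0)\xi,\xi\rangle$ of a unit vector supported on a single coset and letting $g_nHg_n^{-1}\to\{e\}$ via Theorem~\ref{thm:qus-reg-parab--w-contain-reg}.i---is correct and is a nice, more elementary route to the existence of the surjection $C^*_\pi(G)\to\CRed(G)$ than the paper's (which obtains $\la_G\prec\pi$ only as a byproduct of a stronger statement). But you have correctly located the real content in the second claim, and there your argument has a genuine gap. The assertion that the proof of Proposition~\ref{Prop-WPS-Boundary} forces the $H$-invariant measure $\nu$ to be a Dirac mass at an $H$-fixed point is false: weak parabolicity only requires $H$ to fix a point of $\pr(X)$, not of $X$. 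For instance, $H=PO_n(\QQ)$ inside $G=PGL_n(\QQ)$ acting on $\PP(\RRR^n)$ (Example~\ref{Exa-Sub_g}) preserves the rotation-invariant measure but fixes no point of projective space. Consequently the Poisson transform $\P_\nu$ is in general only a unital positive isometric $G$-map, \emph{not} a $*$-homomorphism, and your covariant pair $(C(X),G)\to\B(\H_\pi)$ does not exist. Everything downstream---the descent to $C(X)\rtimes_r G$, the appeal to simplicity of the reduced crossed product, and the Choi multiplicative-domain step---collapses with it; and even in the point-stabilizer case you acknowledge that the full-versus-reduced crossed product issue is unresolved, which is essentially the statement you are trying to prove.

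The paper circumvents all of this by never asking $\P_\nu$ to be multiplicative. It runs a Powers/Haagerup-type averaging argument at the level of states: given any state $\varphi$ of $C^*_\pi(G)$, extend it to $\B(\H)$, restrict along $M\circ\P_\nu$ to get a probability measure on $X$, use strong proximality to translate it to $\delta_x$ for a point $x$ with trivial stabilizer (topological freeness), and then use two Cauchy--Schwarz estimates with the multiplication operators $M(f)$, $0\le f\le 1$, $f(x)=1$, $f(gx)=0$, to show the limiting state kills $\pi(g)$ for all $g\neq e$, i.e.\ equals $\tau_G$. Since this works for an arbitrary state of $C^*_\pi(G)$, it applies to states coming from any $\rho\prec\pi$, giving $\la_G\prec\rho$ for every quotient and hence $J\subset I_{\rm max}$ for every proper ideal $J$. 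Note that your matrix-coefficient argument for $\la_G\prec\pi$ cannot be adapted here: the vectors $F_{\xi,g_n}$ live in $\H_\pi$ and say nothing about quotients of $C^*_\pi(G)$. To repair your proof you would need to replace the homomorphism-based crossed-product machinery with an argument that uses only positivity of $\P_\nu$, which is exactly what the state-averaging scheme does.
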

\begin{proof}
Let $\K$ be the Hilbert space of $\sigma.$ Recall that the Hilbert space $\H$ of $\pi= \Ind_H^G \sigma$ is
the space of maps $F: G\to \K$ such that 
$F(xh) = \sigma(h^{-1}) F(x)$ for all $x \in G, h \in H$
and such that $xH\mapsto \Vert F(x)\Vert$ belongs to $\ell^2(G/H).$
Every $f\in \ell^\infty(G/H)$ defines a bounded operator $M(f)$ on $\H,$
given by multiplication by $f:$ 
$$
(M(f) F)(x)= f(xH) F(x)  \qquad \text{for all} \qquad x\in G, \ F\in \H;
$$
 moreover, we have the covariance relation
 $$
 \pi(g) M(f)\pi(g^{-1}) = M({}_gf) \qquad \text{for all} \qquad g\in G , \leqno{(*)}
 $$
 where ${}_g f \in \ell^\infty(G/H)$ is defined by ${}_g f (xH)= f(g^{-1}xH).$

Since $H\in \Sub_{\rm w-par}(G),$ there  exist a topologically free boundary action $G\act X$ 
and a linear isometric map 
$$P:C(X) \to \ell^\infty(G/H)$$
which is $G$-equivariant, unital and positive (see Proposition~\ref{Prop-WPS-Boundary}).
Observe that $P$ is not  an algebra homomorphism in general. 
We will view $C(X)$ as a closed $G$-invariant subspace of  $\ell^\infty(G/H).$

Let $S(C^*_{\pi}(G))$ and $S(\B(\H))$  be the state spaces of $C^*_{\pi}(G)$ and $\B(\H)$ (see Subsection~\ref{SS-States }).
The action $G\act S(C^*_{\pi}(G))$ extends to an action $G\act S(\B(\H))$, given by the same formula: 
$$g\cdot \varphi(T)= \varphi (\pi(g) T \pi(g^{-1})) \quad\text{for} \quad g\in G, \  \varphi\in  S(\B(\H)), \ T\in  \B(\H).$$
 
Recall that the embedding $P: C(X) \to \ell^{\infty}(G/H)$ preserves positivity. 
Hence, for $\varphi\in  S(\B(\H))$, the map 
$$C(X) \to\CCC, \qquad f\mapsto \varphi(M(f)),$$
denoted by $\varphi|_{C(X)},$ is a state on 
$C(X)$ and is therefore given by a probability measure on $X.$ So,  $\{\varphi|_{C(X)}\mid \varphi\in  S(\B(\H))\}$
 can be identified with $\pr(X)$; moreover, since $C(X)$ is $G$-equivariantly embedded
 in $\ell^\infty(G/H)$ and in view of the relation $(*)$,  the restriction of the action $G\act S(\B(\H))$ to $\pr(X)$ under this identification corresponds to  the natural action of  $G$ on $\pr(X).$

\vskip.2cm
From now on, we  imitate some of the arguments given in the proof of \cite[Theorem 4.5]{Haagerup}, adapting them to our context.

\noindent
$\bullet$ \textbf{Step 1.} Let $\varphi \in S(C^*_{\pi}(G))$ and $x\in X$. 
Let $\varphi'\in S(\A)$ be an extension of $\varphi$. There exists a net
$(g_i)_{i}$ in $G$ such that $(g_i\cdot \varphi')_i$  converges to a state $\psi\in S(\A)$ with  $\psi|_{C(X)}=\delta_x.$

Indeed, let $\mu= \varphi'|_{C(X)}\in \pr(X).$ Since $G\act X$ is a boundary action, there exists
a net $(g_i)_{i}$ in $G$ such that $\lim_i g_i\cdot \mu= \delta_x.$
By compactness of $S(\A),$ by passing to a subnet if necessary, we can assume that $(g_i\cdot \varphi')_i$ converges to some $\psi\in S(\A)$.
Then $\lim_i (g_i\cdot \varphi')|_{C(X)}= \psi|_{C(X)}$ and, since
$(g_i\cdot \varphi')|_{C(X)}= g_i\cdot \mu$, we have $\psi|_{C(X)}=\delta_x.$

 \noindent
$\bullet$ \textbf{Step 2.} Let $\varphi \in S(C^*_{\pi} (G))$ and $x\in X$. 
Assume that $gx\neq x$ for every $g\in G\setminus \{e\}.$ 
There exists a state $\psi\in S(C^*_{\pi} (G))$ which belongs to the weak* closure of $\{g \cdot \varphi\mid  g\in G\}$
such that
$$
\psi(\pi(g))= 0  \qquad\text{for all} \quad g\in G\setminus\{e\}.
$$

Indeed, 
let  $\varphi'\in S(\A), g_i \in G$ and $\psi\in S(\A)$ be as in 
 Step 1. Let $g\in G\setminus\{e\}$. Since $gx\neq x$, we can choose $f\in C(X)$ such that 
$$0\leq f\leq \mathds{1}_{C(X)}, \quad f(g x) = 0, \quad \text{and} \quad f(x)=1.$$
Hence, we have $0\leq 1-f\leq \mathds{1}_{C(X)}$ and so
$$0\leq  M(\mathds{1}_{C(X)}-f) \leq M(\mathds{1}_{C(X)}).
$$
Therefore, 
$$
 \left(M(\mathds{1}_{C(X)}-f)\right)^2 \leq M(\mathds{1}_{C(X)}-f).
 $$
One the one hand, using the Cauchy-Schwartz inequality, it follows that
\[\begin{split}
\vert\psi\left((\mathds{1}_{\B(\H)}-M(f))\pi(g)\right)\vert^2 &=\vert\psi\left((M(\mathds{1}_{C(X)} -f))\pi(g)\right)\vert^2\\
 &\leq \psi\left(\left(M(\mathds{1}_{C(X)}-f)\right)^2\right)\psi(\mathds{1}_{\B(\H)}) \\&\leq \psi\left(M(\mathds{1}_{C(X)}-f)\right) \\
&= 1-\psi(M(f))\\
&= 1-f(x) = 0,
\end{split}\]
so that $\psi(\pi(g))= \psi (M(f)\pi(g)).$ 

 On the other hand, since $0\leq f\leq \mathds{1}_{C(X)},$ we have, 
 as above, 
 $$M(f)^2 \leq M(f).$$
 Using the fact that 
 $\psi(T^*)=\overline{\psi(T)}$ for any $T\in \B(\H)$ 
 and applying again the Cauchy-Schwartz inequality, it follows that
\begin{align*}
\vert\psi(M(f)\pi(g))\vert^2&=  \vert\psi\left(\pi(g^{-1})M(f))\right) \vert^2\\
&\leq \psi(\pi(g^{-1})M(f)^2\pi(g))\\
&\leq \psi(\pi(g^{-1})M(f)\pi(g))\\
&=\psi(M({}_{g^{-1}} f))\\
&= f(g x) = 0;
\end{align*}
so, $\psi(M(f)\pi(g))=0.$  Hence, $\psi(\pi(g)) = 0,$ as claimed.

Recall (see Subsection~\ref{SS-States }) that we can view $S(C^*_\rho(G))$ as a weak* closed subset of $S(C^*(G))$ for every $\rho\in \Rep(G)$.

\noindent
$\bullet$ \textbf{Step 3.} Let $\varphi\in S(C^*_\pi(G)).$ The canonical trace  $\tau_G$ 
 belongs to the weak*-closure of $\{g\cdot \varphi\mid g\in G\}$.

Indeed, since $G\act X$ is topologically free, we can find $x\in X$ with $gx\neq x$ for every  $g\in G\setminus \{e\}$.
Let $\psi\in S(C^*_{\pi} (G))$ be as in Step 2.
Then $\psi(\pi(g))= 0$ for every  $g\in G\setminus \{e\}$. Hence, 
$\psi= \tau_G,$ when $\psi$ and $\tau_G$ are viewed as elements of $S(C^*(G))$.  This proves the claim.

\noindent
$\bullet$ \textbf{Step 4.} Let $\rho\in \Rep(G)$ be such that $\rho\prec \pi.$ Then $\la_G\prec \rho.$

Indeed, let $\varphi\in S(C^*_\rho(G)).$ Since $\rho\prec \pi,$ we can view $\varphi$ as a state on $C^*_{\pi} (G).$
Hence, $\tau_G$ belongs to the weak*-closure of $\{g\cdot \varphi\mid g\in G\}$, by Step 3.
As $g\cdot \varphi\in S(C^*_\rho(G)),$ it follows that $\tau_G\in  S(C^*_\rho(G))$; hence,
$\la_G\prec \rho$, since $\la_G$ is cyclic.

\vskip.2cm 
In particular, it follows from Step 4 that $\la_G\prec \pi$ and so the map  
$\pi(g)\mapsto\la_G(g)$, extends to a surjective $*$-homomorphism $ C^*_{\pi} (G)\to \CRed (G)$. Let $I$ be the kernel of this $*$-homomorphism. So, $C^*_{\pi} (G)/I$ is isomorphic to $\CRed(G).$

\noindent
$\bullet$ \textbf{Step 5.}  We claim that  $I$ is a proper ideal of $C^*_{\pi} (G)$ that contains all other proper ideals of $C^*_{\pi} (G)$.

Indeed, observe first that $I$ is proper since it is the kernel of a non-zero homomorphism. 
Now, let $J$ be a proper ideal of $C^*_{\pi} (G)$. Then $J$ is the kernel of the extension to $C^*_{\pi} (G)$ of a
unitary representation $\rho$ of $G$. Then $\rho\prec \pi$ and hence, by Step 4, $\la_G\prec \rho.$
Therefore, $J\subset I.$
\end{proof}
\begin{remark}
\label{Rem-Theo-C*IndRep}
\begin{itemize}
\item[(i)]  Let $G$ be a countable group, $H\in \Sub_{\rm w-par}(G),$ and $\sigma\in \Rep(H)$.
Then, for  the ideal $I_{\rm max}$ as in Theorem~\ref{Theo-C*IndRep}, we have  
$$I_{\rm max}=\{0\}\Longleftrightarrow \sigma\prec \la_H.$$
  Indeed, since $\la_G\prec \Ind_H^G\sigma,$ we have (see  Corollary~\ref{Cor-AmenableInduced})
$$I_{\rm max}=\{0\}\Longleftrightarrow  \Ind_H^G \sigma \prec \la_G  \Longleftrightarrow\sigma\prec \la_H.$$
In particular, for $\sigma=1_H$ so that $\Ind_H^G\sigma=\la_{G/H},$ we have $I_{\rm max}=\{0\}$ if and only 
if $H$ is amenable.
\item[(ii)]  Let $G$ be a countable group such that there exists a topologically free boundary action $G\act X$.
Then  Theorem~\ref{Theo-C*IndRep} applied to the case $H=\{e\}\in\Sub_{\rm w-par}(G)$ shows that 
that $\CRed(G)$ is simple. Thus,  Theorem~\ref{Theo-C*IndRep} 
 generalizes the  main criterium for $C^*$-simplicity of $G$ from \cite{KalKen} (see Theorem~\ref{Theo-KK} above).
\item[(iii)]  Let $G$ be a countable group, $H\in \Sub_{\rm w-par}(G)$ and $\pi=\Ind_{H}^G$ for $\sigma\in \Rep(H).$ 
Using arguments as in the proof of \cite[Theorem 4.5]{Haagerup}, 
one can rephrase Theorem~\ref{Theo-C*IndRep} in terms of  a ``Dixmier approximation property'':
for every  finite subset  $F$ of $G\backslash\{e\}$ and $\ep>0$, there exist elements $g_1,  \dots, g_n \in G$ such that
$$
\left\| \frac1n \sum_{i=1}^n \pi(g_igg_i^{-1}) \right\| < \ep \qquad\text{for every} \quad g\in F.
$$
\end{itemize}
\end{remark}

\vskip.3cm

Next, we examine a consequence of Theorem~\ref{Theo-RepRigidSubgroups} 
for the ideal theory of the $C^*$-algebra generated by an   induced representation from a subgroup 
with the spectral gap property.
\begin{theorem}
\label{Theo-IdealC*Subsg}
Let $G$ be a countable group, $H\in \Sub_{\rm sg}(G)$ and $\sigma\in \widehat{H}_{\rm fin}.$ 
Let $\pi:= \Ind_{H}^G\sigma$.
The $C^*$-algebra $C^*_{\pi}(G)$ contains a (unique) smallest  non-zero ideal $I_{\rm min}$, that is, 
a non-zero ideal such that every non-zero ideal   $C^*_{\pi}(G)$ contains $I_{\rm min}.$
\end{theorem}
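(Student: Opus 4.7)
My plan is to prove that the ideal $I_{\rm min}$ of compact operators on the Hilbert space $\H_\pi$ of $\pi = \Ind_H^G\sigma$ is contained in $C^*_\pi(G)$ and is precisely the smallest non-zero closed two-sided ideal of $C^*_\pi(G)$.

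The first step is to invoke Theorem~\ref{Theo-RepRigidSubgroups}(i): since $H \in \Sub_{\rm sg}(G)$ and $\sigma \in \widehat{H}_{\rm fd}$, the induced representation $\pi = \Ind_H^G\sigma$ is irreducible and traceable. By the characterization of traceability for irreducible representations recalled in Subsection~\ref{SS-C*-W*-alg}, this is equivalent to saying that $C^*_\pi(G)$ contains the $C^*$-algebra $\K(\H_\pi)$ of compact operators on $\H_\pi$. So setting $I_{\rm min} := \K(\H_\pi)$ gives a non-zero (closed two-sided) ideal of $C^*_\pi(G)$.

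The second, and main, step is to show minimality: every non-zero closed two-sided ideal $J$ of $C^*_\pi(G)$ contains $\K(\H_\pi)$. The argument I have in mind exploits simplicity of $\K(\H_\pi)$. Consider the intersection $J \cap \K(\H_\pi)$. For any $K \in \K(\H_\pi) \subset C^*_\pi(G)$ and any $T \in J \cap \K(\H_\pi)$, the product $KT$ lies in $J$ (since $J$ is an ideal of $C^*_\pi(G)$) and is compact; similarly on the right. Thus $J \cap \K(\H_\pi)$ is a closed two-sided ideal of $\K(\H_\pi)$. Since $\K(\H_\pi)$ is simple, either $J \cap \K(\H_\pi) = \{0\}$ or $J \cap \K(\H_\pi) = \K(\H_\pi)$.

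The hardest point, and the one requiring care, is ruling out the trivial intersection. If $J \cap \K(\H_\pi) = \{0\}$, then for every $K \in \K(\H_\pi)$ and every $T \in J$ one would have $KT \in J \cap \K(\H_\pi) = \{0\}$, hence $KT = 0$. But $\K(\H_\pi)$ acts non-degenerately on $\H_\pi$ (indeed irreducibly, as it already does so alone), so $KT = 0$ for all $K \in \K(\H_\pi)$ forces $T = 0$, contradicting $J \neq \{0\}$. Therefore $J \cap \K(\H_\pi) = \K(\H_\pi)$, i.e.\ $\K(\H_\pi) \subset J$, proving that $I_{\rm min} = \K(\H_\pi)$ is the smallest non-zero ideal. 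The uniqueness of $I_{\rm min}$ is automatic from this minimality property.
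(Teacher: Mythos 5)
Your proof is correct and follows essentially the same route as the paper: obtain $\K(\H_\pi)\subset C^*_\pi(G)$ from traceability via Theorem~\ref{Theo-RepRigidSubgroups}(i), then use simplicity of the compacts to show every non-zero ideal contains them. Your non-degeneracy argument is simply an explicit justification of the step the paper dismisses with ``clearly $JI\neq\{0\}$''.
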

\begin{proof}
Theorem~\ref{Theo-RepRigidSubgroups} shows that $\pi$ is a traceable irreducible representation; hence, $C^*_{\pi}(G)$ contains the ideal $I=\K(\H)$ of compact operators on the Hilbert space $\H$ of $\pi.$
Let $J$ be a non-zero ideal of $C^*_{\pi}(G)$ . Then  clearly $JI \neq \{0\}$ 
and hence  $J\cap I \neq \{0\}$. Since $I$ is a simple $C^*$-algebra, it follows that $I\subset J.$
\end{proof}

For subgroups $H$ belonging to both $\Sub_{\rm sg}(G)$ and $\Sub_{\rm w-par}(G)$, we have the following 
consequence of Theorem~\ref{Theo-C*IndRep} and Theorem~\ref{Theo-IdealC*Subsg}.
\begin{corollary}
\label{thm:top-f-bnd-act-main}
Let $G$ be a countable group, $H\in \Sub_{\rm w-par}(G)\cap \Sub_{\rm sg}(G),$  and $\sigma\in \widehat{H}_{\rm fin}.$ 
Let $\pi:= \Ind_{H}^G\sigma$.
\begin{itemize}
\item[(i)] The $C^*$-algebra $C^*_{\pi}(G)$ contains a  smallest non-zero ideal.
\item[(ii)] The regular representation $\la_G$ extends to a representation of  $C^*_{\pi}(G)$
and the kernel   of this extension is the   largest proper ideal
of $C^*_{\pi}(G)$.
\item[(iii)] There exists  $\pi_{\rm min}\in \Rep(G)$ with the following property:
for every  $\rho\in \Rep(G)$ with $\rho\prec \pi$ and $\rho \centernot \sim \pi$, we have 
$$\lambda_G\prec \rho \prec \pi_{\rm min}\prec \pi.$$
 \end{itemize}
\end{corollary}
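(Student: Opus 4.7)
The plan is to deduce (i) directly from Theorem~\ref{Theo-IdealC*Subsg} applied to $H\in\Sub_{\rm sg}(G)$, and (ii) directly from Theorem~\ref{Theo-C*IndRep} applied to $H\in\Sub_{\rm w-par}(G)$; so the unique smallest non-zero ideal $I_{\rm min}$ and the unique largest proper ideal $I_{\rm max}$ of $C^*_\pi(G)$ are both available at the start. The substance lies in (iii), where the task is to extract the intermediate representation $\pi_{\rm min}$ from the quotient $C^*_\pi(G)/I_{\rm min}$ and then to verify two weak containments.

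For (iii), I would define $\pi_{\rm min}$ as the representation of $G$ obtained by composing $\pi$ with the quotient map $C^*_\pi(G)\to C^*_\pi(G)/I_{\rm min}$ and realizing this quotient faithfully on a Hilbert space; equivalently, $\pi_{\rm min}$ is the representation whose extension to $C^*_\pi(G)$ has kernel exactly $I_{\rm min}$. In particular $\pi_{\rm min}\prec\pi$ and, since $I_{\rm min}\ne 0$, $\pi_{\rm min}\not\sim\pi$. Now given $\rho\in\Rep(G)$ with $\rho\prec\pi$ and $\rho\not\sim\pi$, translating weak containment into inclusion of $C^*$-kernels (as recalled in Subsection~\ref{SS-States }) shows that $\rho$ extends to $C^*_\pi(G)$ with a kernel $J$ which is non-zero (because $\rho\not\sim\pi$) and proper (because $\rho$ is a non-zero representation). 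Minimality of $I_{\rm min}$ then yields $I_{\rm min}\subset J$, which reads $\rho\prec\pi_{\rm min}$; maximality of $I_{\rm max}$ yields $J\subset I_{\rm max}$, which by (ii) reads $\la_G\prec\rho$. Concatenating gives the chain $\la_G\prec\rho\prec\pi_{\rm min}\prec\pi$.

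The only point that warrants explicit checking is that this chain is non-degenerate, i.e.\ that $I_{\rm min}\subset I_{\rm max}$. This holds because $I_{\rm max}$ is itself non-zero: otherwise $C^*_\pi(G)$ would be simple and hence equal to its minimal non-zero ideal $I_{\rm min}=\K(\H_\pi)$, which would force $\H_\pi$ to be finite-dimensional and contradict the fact that $\la_G\prec\pi$ (from Theorem~\ref{Theo-C*IndRep}) for an infinite group $G$. Granted $I_{\rm max}\ne 0$, the defining property of $I_{\rm min}$ immediately gives $I_{\rm min}\subset I_{\rm max}$. I do not foresee any real obstacle beyond this; the only care needed is to keep straight the contravariance between inclusion of kernels in $C^*_\pi(G)$ and weak containment of the corresponding representations of $G$.
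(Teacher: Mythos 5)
Your proposal is correct and follows essentially the same route as the paper: (i) and (ii) are quoted from Theorems~\ref{Theo-IdealC*Subsg} and~\ref{Theo-C*IndRep}, and for (iii) you take $\pi_{\rm min}$ to be a representation whose $C^*_\pi(G)$-kernel is $I_{\rm min}$ and sandwich the kernel $J$ of $\rho$ as $I_{\rm min}\subset J\subset I_{\rm max}$, translating back into weak containments. Your additional verification that $I_{\rm min}\subset I_{\rm max}$ (via $I_{\rm max}\neq\{0\}$) is a harmless refinement the paper leaves implicit.
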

\begin{proof}
Items (i) and (ii) are immediate consequences of  Theorem~\ref{Theo-C*IndRep} and Theorem~\ref{Theo-IdealC*Subsg}.

Let $\pi_{\rm min}$ be a representation of $C_\pi^*(G)$ with kernel 
 the smallest non-zero ideal $I_{\rm min}$; the restriction of $\pi_{\rm min}$ to $G$ 
   is a unitary representation, again denoted by $\pi_{\rm min}$.

Let $\rho\in \Rep(G)$ with $\rho\prec \pi$ and $\rho \centernot \sim \pi$. 
The kernel $J$  of the extension of $\rho$ to $C^*_{\pi} (G)$ is   a non-zero proper  ideal of  $C^*_{\pi} (G)$.
Hence,  $I_{\rm min} \subset J \subset I_{\rm max},$ by Items (i) and (ii),
with  $I_{\rm max}$ the kernel of the extension of $\la_G$ to $C^*_{\pi} (G)$; this means that 
$\la_G\prec \rho \prec \pi_{\rm min}.$
\end{proof}

\begin{remark}
\label{Rem-thm:top-f-bnd-act-main}
It is conceivable - but we know of no example - that, for some  groups $G$ and subgroups $H\in \Sub_{\rm w-par}(G)\cap\Sub_{\rm sg}(G)$, the ideals $I_{\rm min}$ and $I_{\rm max}$ of  $C^*_{\la_{G/H}}(G)$ coincide. If this happens, then 
 $C^*_{\la_{G/H}}(G)$ has a unique non-trivial  ideal.
\end{remark}

\begin{example}
\label{Exa-SLnZSLmZ}
Let $G=PSL_n(\ZZ)$ for $n\geq 3$. Let 
$H$ be a standard copy of   $PSL_{n-1}(\ZZ)$ inside $G$, that is,  the image in $G$
of, say, 
$$
\left[\begin{array}{cc}
1& 0 \\
0 & SL_{n-1}(\ZZ).
\end{array}
\right].
$$
We claim that $H\in \Sub_{\rm w-par}(G)\cap \Sub_{\rm sg} (G).$

Indeed, the fact that $H\in \Sub_{\rm w-par}(G)$ is shown  in Example~\ref{Exa-Sub_g},  
To show that $H\in \Sub_{\rm sg} (G),$ we have to consider the two cases:
$n=3$ and $n\geq 4.$ 
\begin{itemize}
\item $n=3:$    Example~\ref{Exa-A-NormalSubg} shows that $H$ is a-normal.
Since $H$ is non-amenable, we have $H\in \Sub_{\rm sg}(G)$, by Proposition \ref{Prop-ANormalSpectralGap}.
\item $n\geq 4:$  the subgroup $H$ has Kazhdan's propery (T) since $n-1\geq  3.$
Moreover, $[1:0:\dots:0]$ is the unique periodic point of $H$ in $X=\PP(\RRR^n).$ It follows that $H$ is strongly self-commensurating. Hence, $H\in \Sub_{\rm sg}(G)$, by Proposition~\ref{Prop-StronglySelfCommKazhdan}.
\end{itemize}

 With similar arguments, it can be  shown that  every standard copy   of   $PGL_{n-1}(\QQ)$ inside $G=PGL_n(\QQ)$ belongs to  $\Sub_{\rm w-par}(G)\cap \Sub_{\rm sg} (G)$ for $n\geq 3.$
 \end{example}

\end{document}